 \newtheorem{thm}{Theorem}[section]
 \newtheorem{cor}[thm]{Corollary}
 \newtheorem{conj}[thm]{Conjecture}
 \newtheorem{lem}[thm]{Lemma}
 \newtheorem{prop}[thm]{Proposition}
 \theoremstyle{definition}
 \newtheorem{defn}[thm]{Definition}
 \theoremstyle{remark}
 \newtheorem{rem}[thm]{Remark}
 \theoremstyle{remark}
 \theoremstyle{definition}
 \newtheorem{notn}[thm]{Notation}
 \numberwithin{equation}{section}
 \newcommand{\an}{\mathrm{an}}
 \newcommand{\Hom}{\mathrm{Hom}}
 \newcommand{\Ext}{\mathrm{Ext}}
 \newcommand{\Spec}{\mathrm{Spec}}
 \newcommand{\Frob}{\mathrm{Frob}}
 \newcommand{\Sp}{\mathrm{Sp}}
 \newcommand{\Aut}{\mathrm{Aut}}
 \newcommand{\End}{\mathrm{End}}
 \newcommand{\Pic}{\mathrm{Pic}}
 \newcommand{\ord}{\mathrm{ord}}
 \newcommand{\Gal}{\mathrm{Gal}}
 \newcommand{\GL}{\mathrm{GL}}
 \newcommand{\PGL}{\mathrm{PGL}}
 \newcommand{\sep}{\mathrm{sep}}
 \newcommand{\coker}{\mathrm{coker}}
 \newcommand{\Ram}{\mathrm{Ram}}
 \newcommand{\Stab}{\mathrm{Stab}}
 \newcommand{\new}{\mathrm{new}}
 \newcommand{\Id}{\mathrm{Id}}
 \newcommand{\et}{\mathrm{et}}
 \newcommand{\Nr}{\mathrm{Nr}}
 \newcommand{\Odd}{\mathrm{Odd}}
 \newcommand{\fa}{\mathfrak a}
 \newcommand{\fp}{\mathfrak p}
 \newcommand{\fn}{\mathfrak n}
 \newcommand{\fm}{\mathfrak m}
 \newcommand{\fD}{\mathfrak D}
 \newcommand{\cO}{\mathcal{O}}
 \newcommand{\cK}{\mathcal{K}}
 \newcommand{\cC}{\mathcal{C}}
 \newcommand{\cA}{\mathcal{A}}
 \newcommand{\cG}{\mathcal{G}}
 \newcommand{\cS}{\mathcal{S}}
 \renewcommand{\cR}{\mathcal{R}}
 \renewcommand{\cD}{\mathcal{D}}
 \newcommand{\cE}{\mathcal{E}}
 \newcommand{\cI}{\mathcal{I}}
 \renewcommand{\cH}{\mathcal{H}}
 \newcommand{\cT}{\mathcal{T}}
 \newcommand{\gm}[1]{\mathbb{G}_{m,#1}}
 \newcommand{\C}{\mathbb{C}}
 \newcommand{\E}{\mathbb{E}}
 \newcommand{\F}{\mathbb{F}}
 \newcommand{\M}{\mathbb{M}}
 \newcommand{\Q}{\mathbb{Q}}
 \newcommand{\Z}{\mathbb{Z}}
 \newcommand{\A}{\mathbb{A}}
 \renewcommand{\P}{\mathbb{P}}
 \newcommand{\Ell}{\mathcal{E}\ell\ell}
 \newcommand{\To}{\longrightarrow}
 \newcommand{\bs}{\setminus}
 \newcommand{\tE}{\widetilde{E}}
 \newcommand{\tX}{\widetilde{X}}
 \newcommand{\Fi}{F_\infty}
 \newcommand{\G}{\Gamma}
 \newcommand{\La}{\Lambda}
 \newcommand{\twist}[1]{{^\tau}\!#1}
\begin{document}

\title[Jacquet-Langlands isogeny over function fields]
{On Jacquet-Langlands isogeny over function fields}

\author{Mihran Papikian}

\address{Department of Mathematics, Pennsylvania State University, University Park, PA 16802}

\email{papikian@math.psu.edu}

\thanks{The author was supported in part by NSF grant DMS-0801208.}

\subjclass[2010]{Primary 11G18, 11G09; Secondary 14H40}

\keywords{Drinfeld modules, $\cD$-elliptic sheaves, cuspidal divisor
group, Jacquet-Langlands correspondence}


\begin{abstract}
We propose a conjectural explicit isogeny from the Jacobians of
hyperelliptic Drinfeld modular curves to the Jacobians of
hyperelliptic modular curves of $\cD$-elliptic sheaves. The kernel
of the isogeny is a subgroup of the cuspidal divisor group
constructed by examining the canonical maps from the cuspidal
divisor group into the component groups.
\end{abstract}


\maketitle


\section{Introduction} Let $N$ be a square-free integer, divisible by an even
number of primes. It is well-known that the new part of the modular
Jacobian $J_0(N)$ is isogenous to the Jacobian of a Shimura curve;
see \cite{RibetIsogeny}. The existence of this isogeny can be
interpreted as a geometric incarnation of the global
Jacquet-Langlands correspondence over $\Q$ between the cusp forms on
$\GL(2)$ and the multiplicative group of a quaternion algebra
\cite{JaLa}. Jacquet-Langlands isogeny has important arithmetic
applications, for example, to level lowering \cite{Ribet90}. In this
paper we are interested in the function field analogue of the
Jacquet-Langlands isogeny.

\vspace{0.1in}

Let $\F_q$ be the finite field with $q$ elements, and let
$F=\F_q(T)$ be the field of rational functions on $\P^1_{\F_q}$. The
set of places of $F$ will be denoted by $|F|$. Let $A:=\F_q[T]$.
This is the subring of $F$ consisting of functions which are regular
away from the place generated by $1/T$ in $\F_q[1/T]$. The place
generated by $1/T$ will be denoted by $\infty$ and called the
\textit{place at infinity}; it will play a role similar to the
archimedean place for $\Q$. The places in $|F|-\infty$ are the
\textit{finite places}.

Let $v\in |F|$. We denote by $F_v$, $\cO_v$ and $\F_v$ the
completion of $F$ at $v$, the ring of integers in $F_v$, and the
residue field of $F_v$, respectively. We assume that the valuation
$\ord_v: F_v\to \Z$ is normalized by $\ord_v(\pi_v)=1$, where
$\pi_v$ is a uniformizer of $\cO_v$. The \textit{degree of $v$} is
$\deg(v)=[\F_v:\F_q]$. Let $q_v:=q^{\deg(v)}=\# \F_v$. If $v$ is a
finite place, then with an abuse of notation we denote the prime
ideal of $A$ corresponding to $v$ by the same letter.

Given a field $K$, we denote by $\bar{K}$ an algebraic closure of
$K$.

Let $R\subset |F|-\infty$ be a nonempty finite set of places of even
cardinality. Let $D$ be the quaternion algebra over $F$ ramified
exactly at the places in $R$. Let $X^R_F$ be the modular curve of
$\cD$-elliptic sheaves; see $\S$\ref{ssDES}. This curve is the
function field analogue of a Shimura curve parametrizing abelian
surfaces with multiplication by a maximal order in an indefinite
division quaternion algebra over $\Q$. Denote the Jacobian of
$X^R_F$ by $J^R$. The role of classical modular curves in this
context is played by Drinfeld modular curves. With an abuse of
notation, let $R$ also denote the square-free ideal of $A$ whose
support consists of the places in $R$. Let $X_0(R)_F$ be the
Drinfeld modular curve defined in $\S$\ref{ssDMC}. Let $J_0(R)$ be
the Jacobian of $X_0(R)_F$. The same strategy as over $\Q$ shows
that $J^R$ is isogenous to the new part of $J_0(R)$ (see Theorem
\ref{thmExistence} and Remark \ref{rem-new}). The proof relies on
Tate's conjecture, so it provides no information about the isogenies
$J^R\to J_0(R)^\new$ beyond their existence. In this paper we
carefully examine the simplest non-trivial case, namely $R=\{x, y\}$
with $\deg(x)=1$ and $\deg(y)=2$. (When $R=\{x, y\}$ and
$\deg(x)=\deg(y)=1$, both $X^R_F$ and $X_0(R)_F$ have genus $0$.)

\begin{notn} Unless indicated otherwise, throughout the paper $x$ and $y$ will be
two fixed finite places of degree $1$ and $2$, respectively. When
$R=\{x,y\}$, we write $X^{xy}_F$ for $X^R_F$, $J^{xy}$ for $J^R$,
$X_0(xy)_F$ for $X_0(R)_F$, and $J_0(xy)$ for $J_0(R)$.
\end{notn}

The genus of $X^{xy}_F$ is $q$, which is also the genus of
$X_0(xy)_F$. Hence $J_0(xy)$ and $J^{xy}$ are $q$-dimensional
Jacobian varieties, which are isogenous over $F$. We would like to
construct an explicit isogeny $J_0(xy)\to J^{xy}$. A natural place
to look for the kernel of an isogeny defined over $F$ is in the
cuspidal divisor group $\cC$ of $J_0(xy)$. To see which subgroup of
$\cC$ could be the kernel, one needs to compute, besides $\cC$
itself, the component groups of $J_0(xy)$ and $J^{xy}$, and the
canonical specialization maps of $\cC$ into the component groups of
$J_0(xy)$. These calculations constitute the bulk of the paper.
Based on these calculations, in $\S$\ref{SecJLisog} we propose a
conjectural explicit isogeny $J_0(xy)\to J^{xy}$, and prove that the
conjecture is true for $q=2$. We note that $X^{xy}_F$ is
hyperelliptic, and in fact for odd $q$ these are the only $X^R_F$
which are hyperelliptic \cite{PapHE}. The curve $X_0(xy)_F$ is also
hyperelliptic, and for levels which decompose into a product of two
prime factors these are the only hyperelliptic Drinfeld modular
curves \cite{SchweizerHE}. Hence this paper can also be considered
as a study of hyperelliptic modular Jacobians over $F$ which
interrelates \cite{PapHE} and \cite{SchweizerHE}.

The approach to explicating the Jacquet-Langlands isogeny through
the study of component groups and cuspidal divisor groups was
initiated in the classical context by Ogg. In \cite{Ogg}, Ogg
proposed in several cases conjectural explicit isogenies between the
modular Jacobians and the Jacobians of Shimura curves (as far as I
know, these conjectures are still mostly open, but see \cite{GoRo}
and \cite{Helm} for some advances).

\vspace{0.1in}

We summarize the main results of the paper.
\begin{itemize}
\item The cuspidal divisor group $\cC\subset J_0(xy)(F)$ is
isomorphic to
$$
\cC\cong \Z/(q+1)\Z \oplus \Z/(q^2+1)\Z.
$$
\item The component groups of $J_0(xy)$ and $J^{xy}$ at
$x$, $y$, and $\infty$ are listed in Table \ref{table0}. 
($J_0(xy)$ and $J^{xy}$ have good reduction away from $x$, $y$ and
$\infty$, so the component groups are trivial away from these three
places.)
\begin{table}
\begin{tabular}{|c|c|c|c|}
\hline
 & $x$ & $y$ & $\infty$\\
\hline
$J_0(xy)$ & $\Z/(q^2+1)(q+1)\Z$ & $\Z/(q+1)\Z$ & $\Z/(q^2+1)(q+1)\Z$ \\
\hline
$J^{xy}$ & $\Z/(q+1)\Z$ & $\Z/(q^2+1)(q+1)\Z$ & $\Z/(q+1)\Z$\\
\hline
\end{tabular}\caption{Component groups}\label{table0}
\end{table}
\item If we denote the component group of $J_0(xy)$ at $\ast$ by
$\Phi_\ast$, and the canonical map $\cC\to \Phi_\ast$ by
$\phi_\ast$, then there are exact sequences
$$
0\to \Z/(q+1)\Z\to \cC\xrightarrow{\phi_x} \Phi_x\to \Z/(q+1)\Z\to
0,
$$
$$
0\to \Z/(q^2+1)\Z\to \cC\xrightarrow{\phi_y} \Phi_y\to 0,
$$
$$
\phi_\infty: \cC\overset{\sim}{\To} \Phi_\infty\quad \text{if $q$ is
even},
$$
$$
0\to \Z/2\Z\to \cC\xrightarrow{\phi_\infty}\Phi_\infty\to \Z/2\Z\to
0\quad \text{if $q$ is odd}.
$$
\item The kernel $\cC_0\cong \Z/(q^2+1)\Z$ of $\phi_y$ maps
injectively into $\Phi_x$ and $\Phi_\infty$
\end{itemize}
Conjecture \ref{myConj} then states that there is an isogeny
$J_0(xy)\to J^{xy}$ whose kernel is $\cC_0$. As an evidence for the
conjecture, we prove that the quotient abelian variety
$J_0(xy)/\cC_0$ has component groups of the same order as $J^{xy}$.
This is a consequence of a general result (Theorem \ref{propGroth}),
which describes how the component groups of abelian varieties with
toric reduction change under isogenies. Finally, we prove Conjecture
\ref{myConj} for $q=2$ (Theorem \ref{propq=2}); the proof relies on
the fact that $J_0(xy)$ in this case is isogenous to a product of
two elliptic curves. Two other interesting consequences of our
results are the following. First, we deduce the genus formula for
$X^R_F$ proven in \cite{PapGenus} by a different argument (Corollary
\ref{corGenus}). Second, assuming $q$ is even and Conjecture
\ref{myConj} is true, we are able to tell how the optimal elliptic
curve with conductor $xy\infty$ changes in a given $F$-isogeny class
when we change the modular parametrization from $X_0(xy)_F$ to
$X^{xy}_F$ (Proposition \ref{propOpt}).


\section{Preliminaries}\label{SecPrelim}


\subsection{Drinfeld modular curves}\label{ssDMC} Let $K$ be an $A$-field, i.e.,
$K$ is a field equipped with a homomorphism $\gamma: A\to K$. In
particular, $K$ contains $\F_q$ as a subfield. The
\textit{$A$-characteristic} of $K$ is the ideal $\ker(\gamma)\lhd
A$. Let $K\{\tau\}$ be the twisted polynomial ring with commutation
rule $\tau s=s^q\tau$, $s\in K$. A \textit{rank-$2$ Drinfeld
$A$-module over $K$} is a ring homomorphism $\phi: A\to K\{\tau\}$,
$a\mapsto \phi_a$ such that $\deg_\tau \phi_a=-2\ord_\infty(a)$ and
the constant term of $\phi_a$ is $\gamma(a)$. A
\textit{homomorphism} of two Drinfeld modules $u:\phi\to \psi$ is
$u\in K\{\tau\}$ such that $\phi_a u=u\psi_a$ for all $a$ in $A$;
$u$ is an \textit{isomorphism} if $u\in K^\times$. Note that $\phi$
is uniquely determined by the image of $T$:
$$
\phi_T=\gamma(T)+g\tau+\Delta\tau^2,
$$
where $g\in K$ and $\Delta\in K^\times$. The \textit{$j$-invariant}
of $\phi$ is $j(\phi)=g^{q+1}/\Delta$. It is easy to check that if
$K$ is algebraically closed, then $\phi\cong \psi$ if and only if
$j(\phi)=j(\psi)$.

Treating $\tau$ as the automorphism of $K$ given by $k\mapsto k^q$,
the field $K$ acquires a new $A$-module structure via $\phi$. Let
$\fa\lhd A$ be an ideal. Since $A$ is a principal ideal domain, we
can choose a generator $a\in A$ of $\fa$. The $A$-module
$\phi[\fa]=\ker\phi_a(\bar{K})$ does not depend on the choice of $a$
and is called the \textit{$\fa$-torsion} of $\phi$. If $\fa$ is
coprime to the $A$-characteristic of $K$, then $\phi[\fa]\cong
(A/\fa)^2$. On the other hand, if $\fp=\ker(\gamma)\neq 0$, then
$\phi[\fp]\cong (A/\fp)$ or $0$; when $\phi[\fp]= 0$, $\phi$ is
called \textit{supersingular}.

\begin{lem}\label{lemPrelim1}
Up to isomorphism, there is a unique supersingular rank-$2$ Drinfeld $A$-module over
$\overline{\F}_x$: it is the Drinfeld module with $j$-invariant
equal to $0$. Up to isomorphism, there is a unique supersingular rank-$2$ Drinfeld
$A$-module over $\overline{\F}_y$, and its $j$-invariant is
non-zero.
\end{lem}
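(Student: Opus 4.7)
The approach is to translate supersingularity into explicit polynomial conditions on the coefficients $g, \Delta$ of $\phi_T = \gamma(T) + g\tau + \Delta\tau^2$, and then read off the $j$-invariant. The criterion I would use is that $\phi$ is supersingular precisely when, for $a$ a generator of the $A$-characteristic $\fp$, the twisted polynomial $\phi_a$ has the form $c\tau^{2\deg(\fp)}$ in $K\{\tau\}$ with $c \neq 0$; equivalently, the additive polynomial $\phi_a(X)$ has $0$ as its only root in $\bar{K}$.

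For the first statement, pick the generator $T - a$ of $x$ with $a = \gamma(T) \in \F_q \subset \bar{\F}_x$. Then $\phi_{T-a} = \phi_T - a = g\tau + \Delta\tau^2$, and factoring $\tau$ on the left shows that the kernel of $\phi_{T-a}$ is trivial iff $g = 0$. So supersingularity is equivalent to $g = 0$, which gives $j(\phi) = g^{q+1}/\Delta = 0$. Uniqueness is then immediate from the fact recalled just above the lemma that $j$-invariants classify rank-$2$ Drinfeld modules over algebraically closed fields.

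For the second statement, write $y = T^2 + bT + c$ and let $t = \gamma(T) \in \F_{q^2} \subset \bar{\F}_y$, which lies outside $\F_q$ since $y$ is irreducible. Expanding $\phi_y = \phi_T^2 + b\phi_T + c$ in $\bar{\F}_y\{\tau\}$ using $\tau s = s^q\tau$ and collecting coefficients, the $\tau^0$ term vanishes because $y(t) = 0$, and the $\tau^1$ term $g(t + t^q + b)$ vanishes by Vieta. Supersingularity forces the $\tau^2$ and $\tau^3$ coefficients to vanish as well, yielding two equations in $(g, \Delta)$.

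The main step is to show that these two equations admit a unique solution up to the isomorphism action $g \mapsto c^{1-q}g$, $\Delta \mapsto c^{1-q^2}\Delta$ arising from $\phi_T \mapsto c\phi_T c^{-1}$. Vanishing of the $\tau^2$ coefficient $\Delta(t - t^q) + g^{q+1}$ forces $g \neq 0$ (since $t \neq t^q$) and pins down the ratio $g^{q+1}/\Delta = t^q - t$. The identity $(t^q - t)^q = t^{q^2} - t^q = -(t^q - t)$, valid precisely because $t \in \F_{q^2}$, then shows the $\tau^3$ equation is automatically satisfied. Hence $j(\phi) = t^q - t$, which is nonzero as $t \notin \F_q$, and uniqueness follows once more from the $j$-invariant criterion.
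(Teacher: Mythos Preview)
Your proof is correct. The paper's own proof is a one-line citation of Gekeler's result \cite[(5.9)]{GekelerADM}, which counts supersingular $j$-invariants in arbitrary prime characteristic; the lemma then follows by specializing to $\deg(x)=1$ and $\deg(y)=2$. You instead carry out a direct, self-contained calculation: expand $\phi_a$ in $\bar K\{\tau\}$, impose the supersingularity criterion $\phi_a=c\tau^{2\deg\fp}$, and solve. For $\deg=1$ this immediately gives $g=0$; for $\deg=2$ the key step---that the $\tau^3$-condition $g\Delta^q+\Delta g^{q^2}=0$ follows automatically from the $\tau^2$-condition $g^{q+1}=\Delta(t^q-t)$ via $(t^q-t)^q=-(t^q-t)$---is clean and correct. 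Your approach is more elementary and even yields the supersingular $j$-invariant explicitly ($j=t^q-t$ in characteristic $y$), at the cost of being tailored to degrees $1$ and $2$; the paper's citation is shorter but opaque and relies on a general formula.
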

\begin{proof}
This follows from \cite[(5.9)]{GekelerADM} since $\deg(x)=1$ and
$\deg(y)=2$.
\end{proof}

Let $\End(\phi)$ denote the centralizer of $\phi(A)$ in
$\bar{K}\{\tau\}$, i.e., the ring of all homomorphisms $\phi\to
\phi$ over $\bar{K}$. The automorphism group $\Aut(\phi)$ is the
group of units $\End(\phi)^\times$.

\begin{lem}\label{lemPrelim2} If $j(\phi)\neq 0$, then
$\Aut(\phi)\cong \F_q^\times$. If $j(\phi)= 0$, then
$\Aut(\phi)\cong\F_{q^2}^\times$.
\end{lem}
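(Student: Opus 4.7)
The plan is to compute directly, using that $\End(\phi)$ sits inside the twisted polynomial ring $\bar K\{\tau\}$, in which the units are exactly the nonzero constants $\bar K^{\times}$ (because for any $u = \sum_{i=0}^n c_i \tau^i \in \bar K\{\tau\}$ with $c_n\ne 0$, multiplication by $u$ raises $\tau$-degree by $n$, so a two-sided inverse forces $n=0$). So an automorphism of $\phi$ is a scalar $c\in\bar K^{\times}$ satisfying $c\,\phi_a = \phi_a\, c$ for every $a\in A$, and since $\phi$ is determined by $\phi_T$ it suffices to impose the single relation $c\,\phi_T = \phi_T\, c$.

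Next I would expand both sides using the commutation rule $\tau s = s^{q}\tau$. With $\phi_T = \gamma(T) + g\tau + \Delta\tau^2$, the left-hand side is
\[
c\,\phi_T \;=\; c\gamma(T) + c g\,\tau + c\Delta\,\tau^2,
\]
while the right-hand side is
\[
\phi_T\, c \;=\; \gamma(T)c + g c^{q}\,\tau + \Delta c^{q^2}\,\tau^2.
\]
Equating coefficients of $\tau^2$ (and using $\Delta\ne 0$) yields $c^{q^2}=c$, i.e.\ $c\in\F_{q^2}^{\times}$ in every case. Equating coefficients of $\tau$ yields $g(c^{q}-c)=0$. The constant term is automatic.

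From here the two cases of the lemma fall out: when $j(\phi)\ne 0$, i.e.\ $g\ne 0$, we further get $c^{q}=c$, so $c\in\F_q^{\times}$, giving $\Aut(\phi)\cong\F_q^{\times}$; when $j(\phi)=0$, i.e.\ $g=0$, the $\tau$-coefficient relation is vacuous and only $c\in\F_{q^2}^{\times}$ remains, giving $\Aut(\phi)\cong\F_{q^2}^{\times}$. I expect no real obstacle here: once the identification of units in $\bar K\{\tau\}$ is in place, the rest is a one-line coefficient comparison; the only mild point worth flagging is to note explicitly that the scalars $c$ automatically commute with $\phi_a$ for $a\in\F_q\subset A$, so it is genuinely enough to test commutation with $\phi_T$.
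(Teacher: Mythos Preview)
Your argument is correct and follows the same approach as the paper's proof: identify automorphisms with scalars $c\in\bar K^\times$ that commute with $\phi_T$, then read off $c^{q^2-1}=1$ (always) and $c^{q-1}=1$ (when $g\neq 0$) from the coefficients. The paper is simply terser, omitting the explicit justification that units of $\bar K\{\tau\}$ are scalars and the expanded coefficient comparison, but the substance is identical.
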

\begin{proof}
If $u\in \bar{K}^\times$ commutes with $\phi_T=\gamma(T)+g\tau
+\Delta \tau^2$, then $u^{q^2-1}=1$ and $u^{q-1}=1$ if $g\neq 0$.
This implies that $u\in \F_q^\times$ if $j(\phi)\neq 0$, and $u\in
\F_{q^2}^\times$ if $j(\phi)=0$. On the other hand, we clearly have
the inclusions $\F_q^\times\subset \Aut(\phi)$ and, if $j(\phi)=0$,
$\F_{q^2}^\times\subset \Aut(\phi)$. This finishes the proof.
\end{proof}

\begin{lem}\label{lemPrelim3} Let $\fp\lhd A$ be a prime ideal and $\F_\fp:=A/\fp$.
Let $\phi$ be a rank-$2$ Drinfeld $A$-module over
$\overline{\F}_\fp$. Let $\fn\lhd A$ be an ideal coprime to $\fp$.
Let $C_\fn$ be an $A$-submodule of $\phi[\fn]$ isomorphic to
$A/\fn$. Denote by $\Aut(\phi, C_\fn)$ the subgroup of automorphisms
of $\phi$ which map $C_\fn$ to itself. Then $\Aut(\phi, C_\fn)\cong
\F_q^\times$ or $\F_{q^2}^\times$. The second case is possible only
if $j(\phi)=0$.
\end{lem}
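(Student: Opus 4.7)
The approach is to exploit Lemma \ref{lemPrelim2}, which constrains the ambient group: $\Aut(\phi)\cong\F_q^\times$ when $j(\phi)\neq 0$, and $\Aut(\phi)\cong\F_{q^2}^\times$ when $j(\phi)=0$. Since $\Aut(\phi,C_\fn)$ is a subgroup of $\Aut(\phi)$, the assertion that the ``second case'' forces $j(\phi)=0$ is immediate from this. The real content of the lemma is therefore that the stabilizer $\Aut(\phi,C_\fn)$ is either all of $\F_q^\times$ or all of $\F_{q^2}^\times$, never a proper intermediate subgroup of the cyclic group $\F_{q^2}^\times$.

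First I would show $\F_q^\times\subseteq\Aut(\phi,C_\fn)$ unconditionally. For $c\in\F_q^\times$, the automorphism $c\in\bar{K}^\times$ acts on $\phi[\fn]$ by multiplication by $c$ in $\bar{K}$; since $\phi_c=c$ for $c\in\F_q\subset A$, this action coincides with the $A$-module action of $c$. Every $A$-submodule of $\phi[\fn]$, and in particular $C_\fn$, is therefore preserved.

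The main (and only substantive) step is to rule out intermediate stabilizers when $j(\phi)=0$. Assuming $\Aut(\phi)=\F_{q^2}^\times$, I would show that if some $u\in\F_{q^2}^\times\setminus\F_q^\times$ satisfies $uC_\fn=C_\fn$, then every element of $\F_{q^2}^\times$ does. Since $u\notin\F_q$, the subring $\F_q[u]$ equals $\F_{q^2}$, so every $v\in\F_{q^2}$ is an $\F_q$-linear combination of powers of $u$. Elements of $\Aut(\phi)\subset\bar{K}^\times$ act on $\phi[\fn]\subset\bar{K}$ by $\bar{K}$-scalar multiplication, which is additive; iterating $u$-stability gives $u^i C_\fn\subseteq C_\fn$ for all $i\geq 0$, and combining this with additivity together with the $\F_q$-linearity of $C_\fn$ (which holds because $C_\fn$ is an $A$-submodule and $\F_q\subset A$), one concludes $vC_\fn\subseteq C_\fn$ for every $v\in\F_{q^2}$. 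Hence $\Aut(\phi,C_\fn)=\F_{q^2}^\times$. The delicate point to track is the identification of $\F_{q^2}\subset\bar{K}$ (as a subring under addition and multiplication) with its image in $\Aut(\phi)\cup\{0\}$, but this identification is tautological once one recalls that $\Aut(\phi)\subset\bar{K}^\times$.
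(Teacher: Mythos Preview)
Your proposal is correct and follows essentially the same approach as the paper: both first observe that $\F_q^\times$ always stabilizes $C_\fn$, invoke Lemma~\ref{lemPrelim2} to reduce to the case $\Aut(\phi)=\F_{q^2}^\times$, and then argue that any $u\in\Aut(\phi,C_\fn)\setminus\F_q^\times$ generates $\F_{q^2}$ over $\F_q$, so closure of $C_\fn$ under sums and $\F_q$-scalars forces the full $\F_{q^2}^\times$ to stabilize. The paper phrases the last step slightly more economically by noting $\F_{q^2}=\F_q+\F_q u$ (so only $\alpha+u\beta$ is needed, not higher powers of $u$), and mentions the cyclicity of $C_\fn$, but the logic is the same.
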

\begin{proof}
The action of $\F_q^\times$ obviously stabilizes $C_\fn$, hence,
using Lemma \ref{lemPrelim2}, it is enough to show that if
$\Aut(\phi, C_\fn)\neq \F_q^\times$, then $\Aut(\phi, C_\fn)\cong
\F_{q^2}^\times$. Let $u\in \Aut(\phi, C_\fn)$ be an element which
is not in $\F_q$. Then $\Aut(\phi)=\F_q[u]^\times\cong
\F_{q^2}^\times$, where $\F_q[u]$ is considered as a finite subring
of $\End(\phi)$. It remains to show that $\alpha+u\beta$ stabilizes
$C_\fn$ for any $\alpha,\beta\in \F_q$ not both equal to zero. But
this is obvious since $\alpha$ and $u\beta$ stabilize $C_\fn$ and
$C_\fn\cong A/\fn$ is cyclic.
\end{proof}

One can generalize the notion of Drinfeld modules over an $A$-field
to the notion of Drinfeld modules over an arbitrary $A$-scheme $S$
\cite{Drinfeld}. The functor which associates to an $A$-scheme $S$
the set of isomorphism classes of pairs $(\phi,C_\fn)$, where $\phi$
is a Drinfeld $A$-module of rank $2$ over $S$ and $C_\fn\cong A/\fn$
is an $A$-submodule of $\phi[\fn]$, possesses a coarse moduli scheme
$Y_0(\fn)$ that is affine, flat and of finite type over $A$ of pure
relative dimension $1$. There is a canonical compactification
$X_0(\fn)$ of $Y_0(\fn)$ over $\Spec(A)$; see \cite[$\S$9]{Drinfeld}
or \cite{vdPT}. The finitely many points
$X_0(\fn)(\bar{F})-Y_0(\fn)(\bar{F})$ are called the \textit{cusps}
of $X_0(\fn)_F$.

Denote by $\C_\infty$ the completion of an algebraic closure of
$\Fi$. Let $\Omega=\C_\infty-\Fi$ be the \textit{Drinfeld upper
half-plane}; $\Omega$ has a natural structure of a smooth connected
rigid-analytic space over $\Fi$. Denote by $\G_0(\fn)$ the
\textit{Hecke congruence subgroup} of level $\fn$:
$$
\G_0(\fn)=\left\{\begin{pmatrix} a & b \\ c & d
\end{pmatrix}\in \GL_2(A)\ |\ c\in \fn\right\}.
$$
The group $\G_0(\fn)$ naturally acts on $\Omega$ via linear
fractional transformations, and the action is \textit{discrete} in
the sense of \cite[p. 582]{Drinfeld}. Hence we may construct the
quotient $\G_0(\fn)\bs \Omega$ as a $1$-dimensional connected smooth
analytic space over $\Fi$.

The following theorem can be deduced from the results in
\cite{Drinfeld}:

\begin{thm}\label{Drinf} $X_0(\fn)$ is a proper flat scheme of pure
relative dimension $1$ over $\Spec(A)$, which is smooth away from
the support of $\fn$. There is an isomorphism of rigid-analytic
spaces $\G_0(\fn)\bs \Omega\cong Y_0(\fn)_{\Fi}^\an$.
\end{thm}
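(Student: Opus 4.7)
The plan is to separate the statement into its two independent assertions and derive each from Drinfeld's original paper, keeping the discussion at the level of a citation skeleton rather than reproducing his arguments.

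For the scheme-theoretic assertion, I would first invoke the existence of the fine moduli scheme $Y(I)$ parametrizing rank-$2$ Drinfeld $A$-modules with full Drinfeld level-$I$ structure for any ideal $I\lhd A$ with sufficiently many prime factors (\cite{Drinfeld}, \S 5): it is affine, smooth of pure relative dimension $1$ over $\Spec A[I^{-1}]$ and flat over $\Spec A$. Choosing $I$ coprime to $\fn$ and divisible enough, the coarse moduli scheme $Y_0(\fn)$ is recovered as the quotient of $Y(I\fn)$ by the finite group of automorphisms of the enhanced level structure that preserve the pair $(\phi, C_\fn)$; this produces the flatness and relative dimension $1$ of $Y_0(\fn)$ over $\Spec A$, as well as smoothness away from the support of $\fn$ (where the moduli problem is étale-locally representable, Lemma \ref{lemPrelim3} bounding the stabilizers). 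The compactification $X_0(\fn)$ is then constructed by glueing in the cusps as in \cite[\S 9]{Drinfeld}: each cusp is a section of $X_0(\fn)\to\Spec A$ obtained from the Tate--Drinfeld uniformization, and properness over $\Spec A$ follows by applying the valuative criterion to this local model.

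For the rigid-analytic assertion, I would use Drinfeld's analytic uniformization: rank-$2$ Drinfeld $A$-modules over $\C_\infty$ correspond, via the exponential, bijectively to homothety classes of rank-$2$ projective $A$-lattices in $\C_\infty$. Every such lattice is $\C_\infty^\times$-equivalent to one of the form $A+Az$ for $z\in\Omega$, uniquely up to the action of $\GL_2(A)$ by linear fractional transformations, so $\GL_2(A)\bs\Omega$ classifies isomorphism classes of rank-$2$ Drinfeld modules over $\C_\infty$. A $\Gamma_0(\fn)$-structure $C_\fn\subset \phi_z[\fn]=\fn^{-1}(A+Az)/(A+Az)$ can canonically be normalized as the cyclic $A/\fn$-submodule generated by $1/\fn$, and the stabilizer in $\GL_2(A)$ of that submodule is exactly $\Gamma_0(\fn)$. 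This identifies $\Gamma_0(\fn)\bs\Omega$ with the set of isomorphism classes of pairs $(\phi,C_\fn)$ over $\C_\infty$.

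To promote this set-theoretic identification to a rigid-analytic isomorphism $\Gamma_0(\fn)\bs\Omega\cong Y_0(\fn)^{\an}_{\Fi}$, I would produce the universal analytic Drinfeld module with $\Gamma_0(\fn)$-structure over $\Omega$ (its coefficients being rigid-analytic Eisenstein-type functions in $z$), use the universal property of the coarse moduli scheme to get a canonical morphism $\Gamma_0(\fn)\bs\Omega\to Y_0(\fn)^{\an}_{\Fi}$, and check it is bijective on $\C_\infty$-points by the preceding paragraph; discreteness of the $\Gamma_0(\fn)$-action in the sense of \cite[p.\ 582]{Drinfeld} makes the source a smooth connected $1$-dimensional rigid-analytic space over $\Fi$, and a morphism of smooth connected $1$-dimensional rigid-analytic spaces that is bijective on points is an isomorphism.

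The main obstacle, as usual in this setup, is not the bijection on $\C_\infty$-points but the comparison of analytic and algebraic structures: one must construct an \emph{analytic} universal family on $\Omega$ and verify its classifying morphism lands in $Y_0(\fn)^{\an}_{\Fi}$ and factors through $\Gamma_0(\fn)\bs\Omega$. This is exactly what is done in \cite{Drinfeld} and is recalled in \cite{vdPT}; once it is in hand, the smoothness and dimension comparison above closes the argument.
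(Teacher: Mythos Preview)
The paper gives no proof of this theorem: it is stated immediately after the sentence ``The following theorem can be deduced from the results in \cite{Drinfeld}'' and no argument is supplied. Your proposal is thus strictly more than what the paper does---you have written out a reasonable citation skeleton for how the deduction from \cite{Drinfeld} goes, which is essentially the intended content of that one-line reference.

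Two small technical remarks on your sketch. First, invoking Lemma~\ref{lemPrelim3} to bound stabilizers is slightly off: that lemma is stated only for Drinfeld modules over $\overline{\F}_\fp$, not over a general base, so it does not literally apply to the quotient construction of $Y_0(\fn)$ over $\Spec(A)$; the relevant finiteness of automorphism groups is a separate (easy) statement. Second, the assertion that ``a morphism of smooth connected $1$-dimensional rigid-analytic spaces that is bijective on $\C_\infty$-points is an isomorphism'' is not true without further hypotheses (e.g.\ Frobenius twists are bijective on points); one should instead argue that the classifying morphism is \'etale, or pass through a fine level structure where both sides are genuine moduli spaces and the map is the analytification of an algebraic isomorphism. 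These are routine fixes and do not affect the overall shape of your argument.
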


There is a genus formula for $X_0(\fn)_F$ which depends on the prime
decomposition of $\fn$; see \cite[Thm. 2.17]{GN}. By this formula,
the genera of $X_0(x)_F$, $X_0(y)_F$ and $X_0(xy)_F$ are $0$, $0$
and $q$, respectively.


\subsection{Modular curves of $\cD$-elliptic sheaves}\label{ssDES}
Let $D$ be a quaternion algebra over $F$. Let $R\subset |F|$ be the
set of places which ramify in $D$, i.e., $D\otimes F_v$ is a
division algebra for $v\in R$. It is known that $R$ is finite of
even cardinality, and, up to isomorphism, this set uniquely determines $D$; see
\cite{Vigneras}. Assume $R\neq \emptyset$ and $\infty\not \in R$. In
particular, $D$ is a division algebra. Let $C:=\P^1_{\F_q}$. Fix a
locally free sheaf $\cD$ of $\cO_C$-algebras with stalk at the
generic point equal to $D$ and such that
$\cD_v:=\cD\otimes_{\cO_C}\cO_v$ is a maximal order in
$D_v:=D\otimes_F F_v$.

Let $S$ be an $\F_q$-scheme. Denote by $\Frob_S$ its Frobenius
endomorphism, which is the identity on the points and the $q$th
power map on the functions. Denote by $C\times S$ the fiberd product
$C\times_{\Spec(\F_q)}S$. Let $z:S\to C$ be a morphism of
$\F_q$-schemes. A \textit{$\cD$-elliptic sheaf over $S$}, with pole
$\infty$ and zero $z$, is a sequence $\E=(\cE_i,j_i,t_i)_{i\in \Z}$,
where each $\cE_i$ is a locally free sheaf of $\cO_{C\times
S}$-modules of rank $4$ equipped with a right action of $\cD$
compatible with the $\cO_C$-action, and where
\begin{align*}
j_i &:\cE_i\to \cE_{i+1}\\
t_i &:\twist{\cE}_{i}:=(\Id_C\times \Frob_{S})^\ast \cE_i\to
\cE_{i+1}
\end{align*}
are injective $\cO_{C\times S}$-linear homomorphisms compatible with
the $\cD$-action. The maps $j_i$ and $t_i$ are sheaf modifications
at $\infty$ and $z$, respectively, which satisfy certain conditions,
and it is assumed that for each closed point $w$ of $S$, the
Euler-Poincar\'e characteristic $\chi(\cE_0|_{C\times w})$ is in the
interval $[0,2)$; we refer to \cite[$\S$2]{LRS} and
\cite[$\S$1]{Hausberger} for the precise definition. Moreover, to
obtain moduli schemes with good properties at the closed points $w$
of $S$ such that $z(w)\in R$ one imposes an extra condition on $\E$
to be ``special'' \cite[p. 1305]{Hausberger}. Note that, unlike the
original definition in \cite{LRS}, $\infty$ is allowed to be in the
image of $S$; here we refer to \cite[$\S$4.4]{BS} for the details.
Denote by $\Ell^\cD(S)$ the set of isomorphism classes of
$\cD$-elliptic sheaves over $S$. The following theorem can be
deduced from some of the main results in \cite{LRS} and
\cite{Hausberger}:

\begin{thm}\label{thmMC}
The functor $S\mapsto \Ell^{\cD}(S)$ has a coarse moduli scheme
$X^R$, which is proper and flat of pure relative dimension $1$ over
$C$ and is smooth over $C-R-\infty$.
\end{thm}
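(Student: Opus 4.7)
The plan is to construct $X^R$ as a scheme over $C$ by patching together three local constructions on open subsets that cover $C$, and then to verify the global properties of properness, flatness, relative dimension, and smoothness. Since nothing new is being invented, my real task is to identify which reference supplies which piece and to check that they glue.

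First I would restrict to the open subscheme $C - R - \infty$. On this locus $\cD_v$ is a matrix algebra for every fiber, so a $\cD$-elliptic sheaf is, locally on $S$, Morita-equivalent to a rank-$2$ elliptic sheaf in the sense of Drinfeld, and the ``special'' condition is vacuous. The construction of Laumon--Rapoport--Stuhler in \cite[$\S$2--4]{LRS} then produces a Deligne--Mumford stack, smooth of relative dimension $1$; passing to its coarse moduli scheme gives the assertion on $C - R - \infty$, including the smoothness claim in the theorem.

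Second, to extend across the places in $R$, I would invoke Hausberger \cite{Hausberger}. The point of imposing the ``special'' condition at fibers with $z(w)\in R$ is precisely to make the local model for $\cE_0$ at $z$ a $1$-dimensional formal $\cO_{D_v}$-module of height $2$, i.e., a Drinfeld-type formal $\cO$-module, for which a good integral theory exists. Hausberger shows that the resulting stack is flat and proper of pure relative dimension $1$ over a neighborhood of each $v\in R$, with reduction described by a Cerednik--Drinfeld-type uniformization. Patching this with the first step yields a proper flat scheme over $C - \infty$. Third, to allow $z=\infty$, I would use the formulation of \cite[$\S$4.4]{BS}, in which the roles of the pole and the zero at $\infty$ are symmetric; this provides the extension of the moduli across $\infty$ and gives a proper flat scheme over all of $C$.

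The \textbf{main obstacle} is not the construction of any single piece but the verification of global coherence. Concretely, one has to check that the three functors-of-points glue on overlaps (which follows directly from the definitions, since the ``special'' condition is automatic where $z(S)$ misses $R$), and then that the resulting scheme is proper over $C$: properness is not local on the base, so I would apply the valuative criterion to the moduli functor and use that a $\cD$-elliptic sheaf over the generic point of a trait in $S$ extends uniquely over the closed point in each of the three local ranges (LRS away from $R\cup\infty$, Hausberger at $R$, and \cite[$\S$4.4]{BS} at $\infty$). Flatness and purity of the relative dimension are local on $C$ and are inherited from the three pieces, and the smoothness statement on $C-R-\infty$ is already provided by the first step.
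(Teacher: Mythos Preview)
The paper does not actually give a proof of this theorem; it is stated as a black box with the preamble ``can be deduced from some of the main results in \cite{LRS} and \cite{Hausberger}'' (together with the earlier remark that \cite[\S4.4]{BS} handles the extension across $\infty$). Your proposal is precisely an elaboration of that deduction---LRS for the smooth locus $C-R-\infty$, Hausberger for the places in $R$, Blum--Stuhler at $\infty$---so it matches the paper's implied approach, only with more detail than the paper supplies.

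One small correction: your Morita-equivalence remark is misleading. Even when $z:S\to C-R-\infty$, the sheaves $\cE_i$ are $\cD$-modules on all of $C\times S$, and $\cD$ is \emph{not} globally an Azumaya algebra (since $D$ is a division algebra). So there is no reduction to Drinfeld's rank-$2$ elliptic sheaves via Morita equivalence. This does not damage your argument, because \cite{LRS} constructs the moduli of $\cD$-elliptic sheaves directly for an arbitrary central simple algebra $D$ over $F$ with maximal order $\cD$; you should simply invoke that construction rather than claim a reduction to the split case.
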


\begin{rem}
Theorems \ref{Drinf} and \ref{thmMC} imply that $J_0(R)$ and $J^R$
have good reduction at any place $v\in |F|-R-\infty$; cf. \cite[Ch.
9]{NM}.
\end{rem}


\section{Cuspidal divisor group}\label{SecCDG}

For a field $K$, we represent the elements of $\P^1(K)$ as column
vectors $\begin{pmatrix} u
\\ v\end{pmatrix}$ where $u, v\in K$ are not both zero
and $\begin{pmatrix} u \\
v\end{pmatrix}$ is identified with $\begin{pmatrix} \alpha u \\
\alpha v\end{pmatrix}$ if $\alpha \in K^\times$. We assume that
$\GL_2(K)$ acts on $\P^1(K)$ on the left by
$$
\begin{pmatrix} a & b \\ c & d\end{pmatrix}\begin{pmatrix} u
\\ v\end{pmatrix}=\begin{pmatrix} au+bv
\\ cu+dv\end{pmatrix}.
$$

Let $\fn\lhd A$ be an ideal. The cusps of $X_0(\fn)_F$ are in
natural bijection with the orbits of $\G_0(\fn)$ acting from the
left on $\P^1(F)$.

\begin{lem} If $\fn$ is square-free, then there are $2^s$ cusps on
$X_0(\fn)_F$, where $s$ is the number of prime divisors of $\fn$.
All the cusps are $F$-rational.
\end{lem}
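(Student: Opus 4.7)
The plan is to first reduce the statement to a count of $\Gamma_0(\fn)$-orbits on $\mathbb{P}^1(F)$, then to perform that count explicitly using that $A$ is a principal ideal domain, and finally to handle $F$-rationality.

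First, I would use Theorem \ref{Drinf} to pass to the analytic side. The cusps of $Y_0(\fn)_{\Fi}^\an = \Gamma_0(\fn)\bs\Omega$ correspond to the $\Gamma_0(\fn)$-equivalence classes of ends of the Bruhat--Tits tree of $\mathrm{PGL}_2(F_\infty)$ modulo $\Gamma_0(\fn)$, which in turn are in bijection with the orbits of $\Gamma_0(\fn)$ acting on $\mathbb{P}^1(F)$ by linear fractional transformations (as already noted just before the lemma).

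Next I would count these orbits. Given $\binom{u}{v}\in\mathbb{P}^1(F)$, because $A$ is a PID I may choose a representative with $u,v\in A$ and $(u,v)=A$. I claim that for such a representative the $\Gamma_0(\fn)$-orbit is determined by the ideal $\fd=(v)+\fn$, which is a divisor of $\fn$. Given two representatives $\binom{u}{v}$ and $\binom{u'}{v'}$ with $(v)+\fn=(v')+\fn=\fd$, I would construct an element of $\Gamma_0(\fn)$ carrying one to the other as follows. Using $\mathrm{SL}_2(A)$, which contains $\Gamma_0(\fn)$ only up to the Borel condition, first reduce to the case where $v=v'=\delta$ is a fixed generator of $\fd$ in the principal ideal sense, possible since $(u,v)=(u',v')=A$ and $A$ is Euclidean. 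Then $u-u'$ is divisible by $\fd$, and one checks that the matrix $\left(\begin{smallmatrix} 1 & (u-u')/\delta \\ 0 & 1\end{smallmatrix}\right)$ (or a small modification when $\delta=0$, i.e.\ when $\fd=\fn$) lies in $\Gamma_0(\fn)$ and moves one cusp to the other. Conversely, the value of $(v)+\fn$ modulo units in $A/\fn$ is a $\Gamma_0(\fn)$-invariant, and for \emph{square-free} $\fn$ every divisor $\fd\mid\fn$ is coprime to $\fn/\fd$, so there is a well-defined divisor-valued invariant with no further identifications to make. The number of divisors of $\fn=\fp_1\cdots\fp_s$ is $2^s$, giving the count.

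Finally, for $F$-rationality I would note that the bijection above gives, for each divisor $\fd\mid\fn$, a cusp represented by an explicit element of $\mathbb{P}^1(F)$, so under the rigid-analytic identification the cusp is the image of an $F$-rational point of the standard flag variety; translated through the moduli interpretation of $X_0(\fn)$ sketched in \S\ref{ssDMC}, each cusp corresponds to a Tate-type degeneration of a pair $(\phi,C_\fn)$ whose combinatorial data are defined over $F$. Thus the absolute Galois group $\Gal(\bar F/F)$ fixes each cusp, and since the set of geometric cusps on $X_0(\fn)_F$ has cardinality $2^s$ matching the count of orbits, each cusp is $F$-rational. I expect the main technical obstacle to be this last step: writing down the isomorphism between the set of $\Gamma_0(\fn)$-orbits on $\mathbb{P}^1(F)$ and the \emph{schematic} set of cusps of $X_0(\fn)_F$ in a way that makes the Galois-equivariance (with trivial action) visible, rather than only the count of geometric points.
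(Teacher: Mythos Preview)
The paper itself gives no argument here; it simply cites Proposition~3.3 and Corollary~3.4 of \cite{GekelerUber}. Your proposal is therefore more detailed than the paper's own treatment, and its overall shape---parametrize $\Gamma_0(\fn)$-orbits on $\mathbb{P}^1(F)$ by divisors of $\fn$, then handle rationality separately---is the standard one and is essentially what the cited reference carries out.

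That said, your orbit-counting step has a genuine gap. You propose to ``use $\mathrm{SL}_2(A)$'' to reduce both representatives to $v=v'=\delta$, and then to pass from $\binom{u}{\delta}$ to $\binom{u'}{\delta}$ via the unipotent $\left(\begin{smallmatrix}1 & (u-u')/\delta\\ 0 & 1\end{smallmatrix}\right)$. Neither move is valid as written. First, you may only act by $\Gamma_0(\fn)$, not by all of $\mathrm{SL}_2(A)$; the Euclidean algorithm produces matrices with no control on the lower-left entry modulo $\fn$. Second, even granting $v=v'=\delta$, there is no reason that $\delta\mid(u-u')$: both $u$ and $u'$ are merely units modulo $\delta$, and distinct units need not be congruent. (The parenthetical ``when $\delta=0$'' is also off: $\delta$ generates a nonzero divisor of $\fn$ and is never zero.) What actually makes the count work, and where square-freeness enters, is that $\delta$ and $\fn/\delta$ are coprime, so one can solve congruences modulo $\delta$ and modulo $\fn/\delta$ independently to build a matrix in $\Gamma_0(\fn)$ with nonzero lower-left entry carrying $\binom{u}{\delta}$ to $\binom{1}{\delta}$. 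A cleaner route avoids this computation entirely: since $\GL_2(A)$ acts transitively on primitive vectors with stabilizer the Borel $B(A)$, and $\Gamma_0(\fn)$ is the preimage of $B(A/\fn)$ under the surjection $\GL_2(A)\to\GL_2(A/\fn)$, the cusps biject with $B(A/\fn)\backslash\GL_2(A/\fn)/B(A/\fn)$, which by the Bruhat decomposition and the Chinese remainder theorem has exactly $2^s$ elements.

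Your instinct that $F$-rationality is the more delicate point is correct; an honest argument goes through the moduli-theoretic description of the boundary as in \cite{Drinfeld} or \cite{vdPT}, not just the analytic picture, and this is indeed what the cited reference supplies.
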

\begin{proof}
See Proposition 3.3 and Corollary 3.4 in \cite{GekelerUber}.
\end{proof}

For every $\fm|\fn$ with $(\fm, \fn/\fm)=1$ there is an
\textit{Atkin-Lehner involution} $W_\fm$ on $X_0(\fn)_F$, cf.
\cite{SchweizerHE}. Its action is given by multiplication from the
left with any matrix $\begin{pmatrix} ma & b\\ n & m \end{pmatrix}$
whose determinant generates $\fm$, and where $a,b, m, n\in A$,
$(n)=\fn$, $(m)=\fm$.

From now on assume $\fn=xy$. Recall that we denote by $x$ and $y$
the prime ideals of $A$ corresponding to the places $x$ and $y$,
respectively. With an abuse of notation, we will denote by $x$ also
the monic irreducible polynomial in $A$ generating the ideal $x$,
and similarly for $y$. It should be clear from the context in which
capacity $x$ and $y$ are being used. With this notation, $X_0(xy)_F$
has $4$ cusps, which can be represented by
$$
[\infty]:=\begin{pmatrix} 1 \\ 0\end{pmatrix}, \quad
[0]:=\begin{pmatrix} 0
\\ 1\end{pmatrix},
\quad [x]:=\begin{pmatrix} 1 \\ x\end{pmatrix}, \quad [y]:=\begin{pmatrix} 1 \\
y\end{pmatrix},
$$
cf. \cite[p. 333]{SchweizerHE} and \cite[p. 196]{GekelerDisc}.

There are $3$ non-trivial Atkin-Lehner involutions $W_x, W_y$,
$W_{xy}$ which generate a group isomorphic to $(\Z/2\Z)^2$: these
involutions commute with each other and satisfy
$$
W_xW_y=W_{xy}, \quad W_x^2=W_y^2=W_{xy}^2=1.
$$
By \cite[Prop. 9]{SchweizerHE}, none of these involutions fixes a
cusp. In fact, a simple direct calculation shows that
\begin{align}\label{eq-AL}
W_{xy}([\infty])&=[0], \quad W_{xy}([x])=[y];\\
\nonumber W_x ([\infty])&= [y], \quad W_x([0])=[x];\\
\nonumber W_y ([\infty])&= [x], \quad W_y([0])=[y].
\end{align}

Let $\Delta(z)$, $z\in \Omega$, denote the Drinfeld discriminant
function; see \cite{GekelerUber} or \cite{GekelerDisc} for the
definition. This is a holomorphic and nowhere vanishing function on
$\Omega$. In fact, $\Delta(z)$ is a type-$0$ and weight-$(q^2-1)$
cusp form for $\GL_2(A)$. Its order of vanishing at the cusps of
$X_0(\fn)_F$ can be calculated using \cite{GekelerDisc}. When
$\fn=xy$, \cite[(3.10)]{GekelerDisc} implies
\begin{equation}\label{eq-ordDelta}
\ord_{[\infty]}\Delta=1,\quad \ord_{[0]}\Delta=q_xq_y, \quad
\ord_{[x]}\Delta=q_y, \quad \ord_{[y]}\Delta=q_x.
\end{equation}
The functions
$$
\Delta_{x}(z):=\Delta(xz),\quad \Delta_{y}(z):=\Delta(yz), \quad
\Delta_{xy}(z):=\Delta(xyz)
$$
are type-$0$ and weight-$(q^2-1)$ cusp forms for $\G_0(xy)$. Hence
the fractions $\Delta/\Delta_x$, $\Delta/\Delta_y$,
$\Delta/\Delta_{xy}$ define rational functions on $X_0(xy)_{\C_\infty}$. We
compute the divisors of these functions.

The matrix $W_{xy}=\begin{pmatrix} 0 & 1\\ xy  & 0\end{pmatrix} $
normalizes $\G_0(xy)$ and interchanges $\Delta(z)$ and
$\Delta_{xy}(z)$. Thus by (\ref{eq-AL}) and (\ref{eq-ordDelta})
$$
\ord_{[\infty]}\Delta_{xy}=q_xq_y,\quad \ord_{[0]}\Delta_{xy}=1,
\quad \ord_{[x]}\Delta_{xy}=q_x, \quad \ord_{[y]}\Delta_{xy}=q_y.
$$
A similar argument involving the actions of $W_x$ and $W_y$ gives
$$
\ord_{[\infty]}\Delta_{x}=q_x,\quad \ord_{[0]}\Delta_{x}=q_y, \quad
\ord_{[x]}\Delta_{x}=q_xq_y, \quad \ord_{[y]}\Delta_{x}=1;
$$
$$
\ord_{[\infty]}\Delta_{y}=q_y,\quad \ord_{[0]}\Delta_{y}=q_x, \quad
\ord_{[x]}\Delta_{y}=1, \quad \ord_{[y]}\Delta_{y}=q_xq_y.
$$
From these calculations we obtain
\begin{align*}
\mathrm{div}(\Delta/\Delta_{xy})&=
(1-q_xq_y)[\infty]+(q_xq_y-1)[0]+(q_y-q_x)[x]+(q_x-q_y)[y]\\
&= (q^3-1)([0]-[\infty])+(q^2-q)([x]-[y]),
\end{align*}
and similarly,
$$
\mathrm{div}(\Delta/\Delta_{x}) =
(q-1)([y]-[\infty])+(q^3-q^2)([0]-[x]),
$$
$$
\mathrm{div}(\Delta/\Delta_{y})=
(q^2-1)([x]-[\infty])+(q^3-q)([0]-[y]).
$$

Next, by \cite[p. 200]{GekelerDisc}, the largest positive integer
$k$ such that $\Delta/\Delta_{xy}$ has a $k$th root in the field of
modular functions for $\G_0(xy)$ is $(q-1)^2/(q-1)=(q-1)$. We can
apply the same argument to $\Delta/\Delta_{x}$ as a modular function 
for $\G_0(x)$ to deduce that $\Delta/\Delta_{x}$ has
$(q-1)^2/(q-1)$th root. Similarly, $\Delta/\Delta_{y}$ has
$(q-1)(q^2-1)/(q-1)$th root. Therefore, the following relations
hold in $\Pic^0(X_0(xy)_F)$:
\begin{align}\label{eq-rel1}
(q^2+q+1)([0]-[\infty])+q([x]-[y])&=0\\
\nonumber ([y]-[\infty])+q^2([0]-[x])&=0\\
\nonumber ([x]-[\infty])+q([0]-[y])&=0.
\end{align}

There is one more relation between the cuspidal divisors which comes
from the fact that $X_0(xy)_F$ is hyperelliptic. By a theorem of
Schweizer \cite[Thm. 20]{SchweizerHE}, $X_0(xy)_F$ is hyperelliptic,
and $W_{xy}$ is the hyperelliptic involution. Consider the
degree-$2$ covering
$$\pi: X_0(xy)_F\to X_0(xy)_F/W_{xy}\cong \P^1_{F}.$$ Denote $P:=\pi([\infty]), Q:=\pi([x])$.
Since $W_{xy}([\infty])\neq [x]$, $P\neq Q$. There is a function $f$
on $\P^1_F$ with divisor $P-Q$. Now
$$
\mathrm{div}(\pi^\ast f)=\pi^\ast(\mathrm{div}(f))=\pi^\ast(P-Q)
$$
$$
=([\infty]+W_{xy}([\infty]))-([x]+W_{xy}([x]))=[\infty]+[0]-[x]-[y].
$$
This gives the relation in $\Pic^0(X_0(xy)_F)$
\begin{equation}\label{eq-rel2}
[\infty]+[0]-[x]-[y]=0.
\end{equation}

Fixing $[\infty]\in X_0(xy)(F)$ as an $F$-rational point, we have
the Abel-Jacobi map $X_0(xy)_F\to J_0(xy)$ which sends a point $P\in
X_0(xy)_F$ to the linear equivalence class of the degree-$0$ divisor
$P-[\infty]$.

\begin{defn}
Let $c_0, c_x, c_y\in J_0(xy)(F)$ be the classes of $[0]-[\infty]$,
$[x]-[\infty]$, and $[y]-[\infty]$, respectively. These give
$F$-rational points on the Jacobian since the cusps are
$F$-rational. The \textit{cuspidal divisor group} is the subgroup
$\cC\subset J_0(xy)$ generated by $c_0, c_x$, and $c_y$.
\end{defn}

From (\ref{eq-rel1}) and (\ref{eq-rel2}) we obtain the following
relations:
\begin{align*}
(q^2+q+1)c_0+qc_x-qc_y&=0\\
q^2c_0 - q^2c_x+c_y&=0\\
qc_0+c_x-qc_y&=0\\
c_0-c_x-c_y&=0.
\end{align*}

\begin{lem}\label{lemFApprx}
The cuspidal divisor group $\cC$ is 
generated by $c_x$ and $c_y$, which have orders dividing 
$q+1$ and $q^2+1$, 
respectively.
\end{lem}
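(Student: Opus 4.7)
The plan is purely algebraic manipulation of the four cuspidal relations already derived just before the lemma. Let me abbreviate them as
\begin{align*}
(\mathrm{R}_1)\quad (q^2+q+1)c_0+qc_x-qc_y&=0,\\
(\mathrm{R}_2)\quad q^2c_0 - q^2c_x+c_y&=0,\\
(\mathrm{R}_3)\quad qc_0+c_x-qc_y&=0,\\
(\mathrm{R}_4)\quad c_0-c_x-c_y&=0.
\end{align*}

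First I would use the hyperelliptic relation $(\mathrm{R}_4)$ to eliminate $c_0$: it says $c_0=c_x+c_y$, so $\cC$, which by definition is generated by $c_0,c_x,c_y$, is in fact generated by $c_x$ and $c_y$ alone. This immediately gives the first assertion.

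Next I would substitute $c_0=c_x+c_y$ into $(\mathrm{R}_2)$ and $(\mathrm{R}_3)$. In $(\mathrm{R}_2)$ the $c_x$-terms cancel and one is left with $(q^2+1)c_y=0$, and in $(\mathrm{R}_3)$ the $c_y$-terms cancel and one is left with $(q+1)c_x=0$. Thus $c_x$ has order dividing $q+1$ and $c_y$ has order dividing $q^2+1$.

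There is no real obstacle; the content of the lemma is simply that, among the four relations coming from $\Delta/\Delta_x$, $\Delta/\Delta_y$, $\Delta/\Delta_{xy}$ and the hyperelliptic covering, the latter permits an elimination that makes the order bounds transparent. (Relation $(\mathrm{R}_1)$ is redundant in view of the other three, as one checks by substituting $c_0=c_x+c_y$: it becomes $(q+1)^2 c_x+(q^2+1)c_y=0$, which follows at once from $(\mathrm{R}_2)$ and $(\mathrm{R}_3)$ after multiplying the latter by $q+1$.)
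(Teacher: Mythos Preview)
Your proof is correct and follows essentially the same approach as the paper: use the hyperelliptic relation $(\mathrm{R}_4)$ to eliminate $c_0=c_x+c_y$, then substitute into the remaining relations to obtain $(q+1)c_x=0$ and $(q^2+1)c_y=0$. Your additional remark that $(\mathrm{R}_1)$ becomes redundant after substitution is a nice touch but not needed for the lemma itself.
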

\begin{proof}
Substituting $c_0=c_x+c_y$ into the first three equations above, we see   
that $\cC$ is
generated by $c_x$ and $c_y$ subject to relations:
\begin{align*}
(q+1)c_x&=0\\
(q^2+1)c_y&=0.
\end{align*}
\end{proof}

The following simple lemma, which will be used later on, shows that
the factors $(q^2+1)$ and $(q+1)$ appearing in Lemma \ref{lemFApprx}
are almost coprime.

\begin{lem}\label{lem_gcd}
Let $n$ be a positive integer. Then
$$
\mathrm{gcd}(n^2+1, n+1)=\left\{
  \begin{array}{ll}
    1, & \hbox{if $n$ is even;} \\
    2, & \hbox{if $n$ is odd.}
  \end{array}
\right.
$$
\end{lem}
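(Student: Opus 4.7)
The plan is to reduce the computation of $\gcd(n^2+1, n+1)$ to the computation of $\gcd(2, n+1)$ by performing one step of the Euclidean algorithm. Specifically, I would write
$$
n^2+1 = (n+1)(n-1) + 2,
$$
which immediately shows that $n^2+1 \equiv 2 \pmod{n+1}$. Hence $\gcd(n^2+1, n+1) = \gcd(2, n+1)$.

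Next I would split into cases based on the parity of $n$. If $n$ is even, then $n+1$ is odd, so $\gcd(2, n+1) = 1$. If $n$ is odd, then $n+1$ is even, so $\gcd(2, n+1) = 2$. This gives exactly the two cases in the statement.

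There is no real obstacle here; the entire argument is a single application of the division algorithm followed by a parity check. The only thing worth double-checking is the sign/boundary issue when $n = 0$ or $n = 1$, but in either case the formula $n^2+1 = (n+1)(n-1)+2$ still holds and the parity dichotomy is correct (for $n=0$: $\gcd(1,1)=1$; for $n=1$: $\gcd(2,2)=2$).
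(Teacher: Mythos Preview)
Your proof is correct and follows essentially the same approach as the paper: reduce to showing the gcd divides $2$, then check parity. Your single polynomial division $n^2+1=(n+1)(n-1)+2$ is in fact slightly cleaner than the paper's two-step subtraction argument (first reducing to $d\mid n(n-1)$, then to $d\mid 2$), but the idea is the same.
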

\begin{proof}
Let $d=\mathrm{gcd}(n^2+1, n+1)$. Then $d$ divides
$(n^2+1)-(n+1)=n(n-1)$. Since $n$ is coprime to $n+1$, $d$ must
divide $n-1$, hence also must divide $(n+1)-(n-1)=2$. For $n$ even,
$d$ is obviously odd, so $d=1$. For $n$ odd, $n+1$ and $n^2+1$ are
both even, so $d=2$.
\end{proof}


\section{N\'eron models and component groups}\label{SecCG}

\subsection{Terminology and notation}
The notation in this section will be somewhat different from the
rest of the paper. Let $R$ be a complete discrete valuation ring,
with fraction field $K$ and algebraically  closed residue field $k$.

Let $A_K$ be an abelian variety over $K$. Denote by $A$ its N\'eron
model over $R$ and denote by $A_k^0$ the connected component of the
identity of the special fiber $A_k$ of $A$. There is an exact
sequence
$$
0\to A_k^0\to A_k\to \Phi_A\to 0,
$$
where $\Phi_A$ is a finite (abelian) group called the
\textit{component group of $A_K$}. We say that $A_K$ has
\textit{semi-abelian reduction} if $A_k^0$ is an extension of an
abelian variety $A_k'$ by an affine algebraic torus $T_A$ over $k$
(cf. \cite[p. 181]{NM}):
$$
0\to T_A\to A_k^0\to A_k'\to 0.
$$
We say that $A_K$ has \textit{toric reduction} if $A_k^0=T_A$. The
\textit{character group}
$$
M_A:=\Hom(T_{A}, \gm{k})
$$
is a free abelian group contravariantly associated to $A$.

Let $X_K$ be a smooth, proper, geometrically connected curve over
$K$. We say that $X$ is a \textit{semi-stable model} of $X_K$ over
$R$ if (cf. \cite[p. 245]{NM}):
\begin{enumerate}
\item[(i)] $X$ is a proper flat $R$-scheme.
\item[(ii)] The generic fiber of $X$ is $X_K$.
\item[(iii)] The special fiber $X_k$ is reduced, connected, and has
only ordinary double points as singularities.
\end{enumerate}
We will denote the set of irreducible components of $X_k$ by $C(X)$
and the set of singular points of $X_k$ by $S(X)$. Let $G(X)$ be the
\textit{dual graph of $X$}: The set of vertices of $G(X)$ is the set
$C(X)$, the set of edges is the set $S(X)$, the end points of an
edge $x$ are the two components containing $x$. Locally at $x\in
S(X)$ for the \'etale topology, $X$ is given by the equation
$uv=\pi^{m(x)}$, where $\pi$ is a uniformizer of $R$. The integer
$m(x)\geq 1$ is well-defined, and will be called the
\textit{thickness} of $x$. One obtains from $G(X)$ a graph with
length by assigning to each edge $x\in S(X)$ the length $m(x)$.

\subsection{Raynaud's theorem}\label{ssRaynaud} Let $X_K$ be a curve
over $K$ with semi-stable model $X$ over $R$. Let $J_K$ be the
Jacobian of $X_K$, let $J$ be the N\'eron model of $J_K$ over $R$, and
$\Phi:=J_k/J_k^0$. Let $\tX\to X$ be the minimal resolution of $X$.
Let $B(\tX)$ be the free abelian group generated by the elements of
$C(\tX)$. Let $B^0(\tX)$ be the kernel of the homomorphism
$$
B(\tX)\to \Z, \quad \sum_{C_i\in C(\tX)} n_i C_i\mapsto \sum n_i.
$$
The elements of $C(\tX)$ are Cartier divisors on $\tX$, hence for
any two of them, say $C$ and $C'$, we have an intersection number
$(C\cdot C')$. The image of the homomorphism
$$
\alpha: B(\tX)\to B(\tX), \quad C\mapsto \sum_{C'\in C(\tX)}(C\cdot
C')C'
$$
lies in $B^0(\tX)$. A theorem of Raynaud \cite[Thm. 9.6/1]{NM} says
that $\Phi$ is canonically isomorphic to $B^0(\tX)/\alpha(B(\tX))$.

The homomorphism $\phi: J_K(K)\to \Phi$ obtained from the
composition $$J_K(K)=J(R)\to J_k(k)\to \Phi$$ will be called the
\textit{canonical specialization map}. Let $D=\sum_Q n_Q Q$ be a
degree-$0$ divisor on $X_K$ whose support is in the set of
$K$-rational points. Let $P\in J_K(K)$ be the linear equivalence
class of $D$. The image $\phi(P)$ can be explicitly described as
follows. Since $X$ and $\tX$ are proper, $X(K)=X(R)=\tX(R)$. Since
$\tX$ is regular, each point $Q\in X(K)$ specializes to a unique
element $c(Q)$ of $C(\tX)$. With this notation, $\phi(P)$ is the
image of $\sum_Q n_Q c(Q)\in B^0(\tX)$ in $\Phi$.

\vspace{0.1in}

We apply Raynaud's theorem to compute $\Phi$ explicitly for a
special type of $X$. Assume that $X_k$ consists of two components
$Z$ and $Z'$ crossing transversally at $n\geq 2$ points $x_1,\dots,
x_n$. Denote $m_i:=m(x_i)$. Let $r:\tX\to X$ denote the resolution
morphism; it is a composition of blow-ups at the singular points. It
is well-known that $r^{-1}(x_i)$ is a chain of $m_i-1$ projective
lines. More precisely, the special fiber $\tX_k$ consists of $Z$ and
$Z'$ but now, instead of intersecting at $x_i$, these components are
joined by a chain $E_{1}, \cdots, E_{m_i-1}$ of projective lines,
where $E_{i}$ intersect $E_{i+1}$, $E_{1}$ intersects $Z$ at $x_i$
and $E_{m_i-1}$ intersects $Z'$ at $x_i$. All the singularities are
ordinary double points.

Assume $m_1=m_n=m\geq 1$ and $m_2=\cdots=m_{n-1}=1$ if $n\geq 3$.

If $m=1$, then $X=\tX$, so $B^0(\tX)$ is freely generated by
$z:=Z-Z'$. In this case Raynaud's theorem implies that $\Phi$ is
isomorphic to $B^0(\tX)$ modulo the relation $nz=0$.

If $m\geq 2$, let $E_1, \dots, E_{m-1}$ be the chain of projective
lines at $x_1$ and $G_1,\dots, G_{m-1}$ be the chain of projective
lines at $x_n$, with the convention that $Z$ in $\tX_k$ intersects
$E_1$ and $G_1$, cf. Figure \ref{Fig2}.
\begin{figure}
\begin{picture}(100,50)
\qbezier(10,45)(35,0)(50,25)\qbezier(50,25)(65,50)(90,5)
\qbezier(50,25)(65,0)(90,45)\qbezier(10,5)(35,50)(50,25)
\put(0,30){\line(1,1){15}}\put(0,20){\line(1,-1){15}}
\put(3,12.5){\line(0,1){25}}\put(85,45){\line(1,-1){15}}
\put(85,5){\line(1,1){15}}\put(97,12.5){\line(0,1){25}}
\put(7.5,13){$E_1$}\put(3.5,24){$E_2$}\put(7,34){$E_3$}
\put(86.5,13){$G_3$}\put(91,24){$G_2$}\put(88.5,34){$G_1$}
\put(37,34.5){$Z$}\put(60,34.5){$Z'$}
\end{picture}
\caption{$\tX_{k}$ for $n=5$ and $m=4$}\label{Fig2}
\end{figure}
The elements $z:=Z-Z'$, $e_{i}:=E_{i}-Z'$, $g_i:=G_i-Z'$, $1\leq
i\leq m-1$ form a $\Z$-basis of $B^0(\tX)$. By Raynaud's theorem,
$\Phi$ is isomorphic to $B^0(\tX)$ modulo the following relations:

if $m=2$,
$$
-nz+e_1+g_1=0, \quad z-2e_1=0, \quad z-2g_1=0;
$$

if $m=3$,
$$
-nz+e_1+g_1=0, \quad z-2e_1+e_2=0, \quad z-2g_1+g_2=0,
$$
$$
e_{1}-2e_{2}=0, \quad g_{1}-2g_{2}=0;
$$

if $m\geq 4$
$$
-nz+e_1+g_1=0, \quad z-2e_1+e_2=0, \quad z-2g_1+g_2=0,
$$
$$
e_i-2e_{i+1}+e_{i+2}=0, \quad g_i-2g_{i+1}+g_{i+2}=0,\quad 1\leq
i\leq m-3,
$$
$$
e_{m-2}-2e_{m-1}=0, \quad g_{m-2}-2g_{m-1}=0.
$$

\begin{thm}\label{thmCG} Denote the images of $z$,
$e_i$, $g_i$ in $\Phi$ by the same letters, and let $\langle
z\rangle$ be the cyclic subgroup generated by $z$ in $\Phi$. Then
for any $n\geq 2$ and $m\geq 1$
\begin{enumerate}
\item[(i)] $\Phi\cong \Z/m(m(n-2)+2)\Z$.
\item[(ii)] If $m\geq 2$, then $\Phi$ is generated by $e_{m-1}$. Explicitly, for $1\leq i\leq
m-1$,
\begin{align*}
e_i&=(m-i)e_{m-1},\\ g_i&=\left(i(nm+1)-(2i-1)m\right)e_{m-1},\\
z&=me_{m-1}.
\end{align*}
\item[(iii)] $\Phi/\langle z\rangle\cong \Z/m\Z$.
\end{enumerate}
\end{thm}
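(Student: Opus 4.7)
The plan is to handle the trivial case $m = 1$ first, and for $m \ge 2$ to collapse the $(2m-1)$-generator, $(2m-1)$-relation presentation of $\Phi$ provided by Raynaud's theorem down to a $2 \times 2$ presentation via the linear recursions along the two chains.

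For $m = 1$, one has $\tX = X$ and $B^0(\tX) = \Z \cdot z$, so the displayed relation $nz = 0$ yields $\Phi \cong \Z/n\Z$, which agrees with $m(m(n-2)+2) = n$; parts (ii) and (iii) are vacuous.

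For $m \ge 2$, I would first observe that the relations $e_{m-2} - 2e_{m-1} = 0$ together with $e_i - 2e_{i+1} + e_{i+2} = 0$ (for $1 \le i \le m-3$) constitute a three-term linear recursion with boundary data at $i = m-1, m-2$, whose unique solution, by downward induction on $i$, is $e_i = (m-i)\, e_{m-1}$ for all $1 \le i \le m-1$. Substituting into $z - 2e_1 + e_2 = 0$ yields $z = m\, e_{m-1}$. The same argument on the other chain gives $g_i = (m-i)\, g_{m-1}$ and $z = m\, g_{m-1}$. These account for $2m-3$ of the $2m-1$ Raynaud relations, used to eliminate the $2m-3$ generators $z, e_1, \ldots, e_{m-2}, g_1, \ldots, g_{m-2}$, and leave $\Phi$ presented by the two generators $e := e_{m-1},\ g := g_{m-1}$ and the two residual relations
\begin{align*}
m e - m g &= 0, \\
(m - 1 - nm)\, e + (m-1)\, g &= 0,
\end{align*}
the first obtained by equating the two expressions for $z$, and the second obtained by substituting $z = me$, $e_1 = (m-1)e$, $g_1 = (m-1)g$ into $-nz + e_1 + g_1 = 0$.

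From this $2 \times 2$ presentation, the theorem follows by elementary integer linear algebra. The determinant of the presentation matrix $\left(\begin{smallmatrix} m & -m \\ m-1-nm & m-1 \end{smallmatrix}\right)$ equals $m(m-1) + m(m-1-nm) = -m(m(n-2)+2)$, whose absolute value gives (i). Solving the second relation for $(m-1)g$, adding $g$, and using $mg = me$ from the first relation then expresses $g \equiv (2m - 1 - nm)\, e$, so $\Phi$ is cyclic with generator $e_{m-1}$; back-substitution yields the stated formulas for $e_i$ and $z$, while $g_i = (m-i)g \equiv (m-i)(2m-1-nm)\, e$ is congruent modulo $|\Phi|$ to $(i(nm+1) - (2i-1)m)\, e_{m-1}$, a congruence I would verify by expanding the difference and observing it equals $m(m(n-2)+2)$. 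Part (iii) is immediate: $z = m\, e_{m-1}$ shows $\langle z\rangle = m\Phi$, hence $\Phi/\langle z\rangle \cong \Z/m\Z$. The cases $m = 2, 3$ need only cosmetic adjustment since the intermediate chain recursions degenerate or are absent, but the substitute-and-solve strategy applies verbatim.

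I expect the main technical step to be the careful bookkeeping when collapsing the large presentation to the $2 \times 2$ one and tracking that the substitution of $z$ by $me_{m-1}$ on one side and $mg_{m-1}$ on the other produces the correct residual relation; once that is in hand, every claim reduces to a one-line $\Z$-linear computation.
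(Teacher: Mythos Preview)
Your argument is correct and takes a somewhat different route from the paper's. The paper first invokes an external formula \cite[Prop.~9.6/10]{NM} to obtain $\#\Phi = m^2(n-2)+2m$, then runs the $e$-chain recursion as you do to get $e_i=(m-i)e_{m-1}$ and $z=me_{m-1}$, and finally uses the bridge relation $-nz+e_1+g_1=0$ together with $z-2g_1+g_2=0$ to express $g_1, g_2$ directly in terms of $e_{m-1}$; the forward recursion $g_i-2g_{i+1}+g_{i+2}=0$ then propagates this to all $g_i$. Knowing everything is a multiple of $e_{m-1}$ gives cyclicity, and the external order computation finishes (i). Your version instead exploits the symmetry between the two chains to collapse to a $2\times 2$ presentation in $e_{m-1}, g_{m-1}$, reads the order off the determinant without appeal to \cite{NM}, and then solves for $g_{m-1}$ in terms of $e_{m-1}$; the price is the extra congruence check relating $(m-i)(2m-1-nm)$ to the paper's closed form $i(nm+1)-(2i-1)m$. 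Your approach is thus self-contained for the order computation, while the paper's is more direct for the explicit $g_i$ formulas; both are short and the underlying linear algebra is the same.
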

\begin{proof} When $m=1$ the claim is obvious, so assume $m\geq 2$.
By \cite[Prop. 9.6/10]{NM}, $\Phi$ has order
$$
\sum_{i=1}^n \prod_{j\neq i}m_j= m^2(n-2)+2m.
$$
From the relations
\begin{align*}
e_{m-2}-2e_{m-1}&=0\\  e_i-2e_{i+1}+e_{i+2}&=0, \quad 1\leq i\leq m-3\\
z-2e_1+e_2& =0
\end{align*}
it follows inductively that $e_i=(m-i)e_{m-1}$ for $1\leq i\leq
m-1$, and $z=me_{m-1}$. Next, from the relations
$$
-nz+e_1+g_1=0\quad \text{and}\quad  z-2g_1+g_2=0
$$
we get $g_1=(nm-m+1)e_{m-1}$ and $g_2=(2nm-3m+2)e_{m-1}$. Finally,
if $m\geq 4$, the relations $g_i-2g_{i+1}+g_{i+2}=0$, $1\leq i\leq
m-3$, show inductively that
$$
g_i=\left(i(nm+1)-(2i-1)m\right)e_{m-1},\quad 1\leq i\leq m-1.
$$
This proves (i) and (ii), and (iii) is an immediate consequence of
(ii).
\end{proof}

\begin{rem}
Note that by the formula in Theorem \ref{thmCG}
$$
g_{m-1}=(m^2(n-2)+2m-(m(n-2)+1))e_{m-1}=-(m(n-2)+1)e_{m-1}.
$$
It is easy to see that $m(n-2)+1$ is coprime to the order
$m(m(n-2)+2)$ of $\Phi$. Hence $g_{m-1}$ is also a generator. This
is of course not surprising since the relations defining $\Phi$
remain the same if we interchange $e_i$'s and $g_i$'s.
\end{rem}

\subsection{Grothendieck's theorem} Grothendieck gave another
description of $\Phi$ in \cite{SGA7}. This description will be
useful for us when studying maps between the component groups
induced by isogenies of abelian varieties.

Let $A_K$ be an abelian variety over $K$ with semi-abelian
reduction. Denote by $\hat{A}_K$ the dual abelian variety of $A_K$.
As discussed in \cite{SGA7}, there is a non-degenerate pairing
$u_A:M_A\times M_{\hat{A}}\to \Z$ (called \textit{monodromy
pairing}) having nice functorial properties, which induces an exact
sequence
\begin{equation}\label{eqGrothM}
0\to M_{\hat{A}}\xrightarrow{u_A} \Hom(M_A, \Z)\to \Phi_A\to 0.
\end{equation}

Let $H\subset A_K(K)$ be a finite subgroup of order coprime to the
characteristic of $k$. Since $A(R)=A_K(K)$, $H$ extends to a
constant \'etale subgroup-scheme $\cH$ of $A$. The restriction to
the special fiber gives a natural injection $\cH_k\cong
H\hookrightarrow A_k(k)$, cf. \cite[Prop. 7.3/3]{NM}. Composing this
injection with $A_k\to \Phi_A$, we get the canonical homomorphism
$\phi:H\to \Phi_A$. Denote $H_0:=\ker(\phi)$ and
$H_1:=\mathrm{im}(\phi)$, so that there is a tautological exact
sequence
$$
0\to H_0\to H\xrightarrow{\phi} H_1\to 0.
$$
Let $B_K$ be the abelian variety obtained as the quotient of $A_K$
by $H$. Let $\varphi_K: A_K\to B_K$ denote the isogeny whose kernel
is $H$. By the N\'eron mapping property, $\varphi_K$ extends to a
morphism $\varphi: A\to B$ of the N\'eron models. On the special
fibers we get a homomorphism $\varphi_k: A_k\to B_k$, which induces
an isogeny $\varphi_k^0:A_k^0\to B_k^0$ and a homomorphism
$\varphi_\Phi: \Phi_A\to \Phi_B$. The isogeny $\varphi_k^0$
restricts to an isogeny $\varphi_t:T_A\to T_B$, which corresponds to
an injective homomorphisms of character groups $\varphi^\ast: M_B\to
M_A$ with finite cokernel.

\begin{thm}\label{propGroth} Assume $A_K$ has toric reduction.
There is an exact sequence
$$
0\to H_1\to \Phi_A\xrightarrow{\varphi_\Phi} \Phi_B\to H_0\to 0.
$$
\end{thm}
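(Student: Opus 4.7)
The plan is to derive the asserted exact sequence from Grothendieck's monodromy sequence \eqref{eqGrothM} by combining functoriality and the snake lemma. Since toric reduction is preserved under isogeny, (\ref{eqGrothM}) applies to $B_K$ as well. The isogeny $\varphi_K$ induces an injection $\varphi^\ast:M_B\hookrightarrow M_A$ with finite cokernel (it is dual to the surjective torus isogeny $\varphi_t:T_A\to T_B$), and the dual isogeny $\hat\varphi_K:\hat B_K\to \hat A_K$ induces $\hat\varphi^\ast:M_{\hat A}\hookrightarrow M_{\hat B}$ with finite cokernel. Naturality of the monodromy pairing (\cite{SGA7}) assembles these into a commutative ladder of short exact sequences
\begin{equation*}
\begin{CD}
0 @>>> M_{\hat A} @>{u_A}>> \Hom(M_A,\Z) @>>> \Phi_A @>>> 0\\
@. @VV{\hat\varphi^\ast}V @VV{(\varphi^\ast)^\vee}V @VV{\varphi_\Phi}V\\
0 @>>> M_{\hat B} @>{u_B}>> \Hom(M_B,\Z) @>>> \Phi_B @>>> 0,
\end{CD}
\end{equation*}
to which the snake lemma applies to yield
\begin{equation*}
0\to \ker(\varphi_\Phi)\to \coker(\hat\varphi^\ast)\xrightarrow{\delta}\coker((\varphi^\ast)^\vee)\to \coker(\varphi_\Phi)\to 0.
\end{equation*}

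Next I would identify the two middle cokernels with $H_1$ and $H_0$. Since $\ker(\varphi_t)=H_0$ canonically, $\coker(\varphi^\ast)\cong H_0^\vee$ (the character group of $H_0$); dualizing the exact sequence $0\to M_B\to M_A\to H_0^\vee\to 0$ over $\Z$ then gives $\coker((\varphi^\ast)^\vee)\cong \Ext^1(H_0^\vee,\Z)\cong H_0$ canonically, using that $|H_0|$ is invertible in $k$. For the other cokernel, I would use the rigid-analytic uniformization $A_K^\an\cong T_A/M_{\hat A}$, which canonically identifies the Raynaud lattice with $M_{\hat A}$, and lift $H$ to a subgroup $\tilde H\subset T_A(K)$ with $M_{\hat A}\subset \tilde H$ and $\tilde H/M_{\hat A}=H$. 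Then $B_K^\an\cong (T_A/H_0)/(\tilde H/H_0)$, so $T_B\cong T_A/H_0$ and $M_{\hat B}\cong \tilde H/H_0$; the map $\hat\varphi^\ast$ is induced by the inclusion $M_{\hat A}\hookrightarrow \tilde H$, whence
\[
\coker(\hat\varphi^\ast)\cong \tilde H/(M_{\hat A}+H_0)\cong H/H_0=H_1.
\]

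The crucial step is to show that $\delta=0$, which splits the snake sequence into the required identifications $\ker(\varphi_\Phi)\cong H_1$ and $\coker(\varphi_\Phi)\cong H_0$. By construction $\delta$ is the map induced by $u_B$, so it suffices to verify the inclusion $u_B(M_{\hat B})\subset (\varphi^\ast)^\vee(\Hom(M_A,\Z))$. In the uniformization, $u_B$ is the valuation map $T_B(K)\to\Hom(M_B,\Z)=X_\ast(T_B)$ restricted to $M_{\hat B}\subset T_B(K)$, and every class in $M_{\hat B}=\tilde H/H_0$ is the image under $\varphi_t$ of some $h\in\tilde H\subset T_A(K)$; since $v_B\circ\varphi_t=\varphi_\ast\circ v_A$ and $\varphi_\ast=(\varphi^\ast)^\vee$, the inclusion is immediate. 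The main obstacle, and indeed essentially the only subtle point, is to keep straight the two standard descriptions of $M_{\hat A}$ — as the character group of the toric part of $\hat A$ (as used in the Grothendieck sequence) and as the Raynaud lattice in the rigid uniformization of $A_K$ (as used to compute $\coker(\hat\varphi^\ast)$) — so as to justify the diagram above; once these identifications are made, the rest is formal.
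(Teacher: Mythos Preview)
Your argument is correct and follows the same overall architecture as the paper's proof: both set up the commutative ladder between the Grothendieck monodromy sequences for $A_K$ and $B_K$, apply the snake lemma, and identify the middle cokernels with $H_1$ and $H_0$. The difference lies in how the connecting map is shown to vanish.

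The paper takes a shortcut you missed: it observes at the outset that $\ker(\varphi_\Phi)=H_1$ directly. Indeed, $\varphi_k:A_k\to B_k$ has kernel $\cH_k\cong H$, and since $\varphi_k^0$ is surjective (an isogeny of tori), a snake-lemma step on $0\to A_k^0\to A_k\to\Phi_A\to 0$ shows $\ker(\varphi_\Phi)$ is exactly the image of $H$ in $\Phi_A$, namely $H_1$. Once this is known, the four-term sequence $0\to\ker(\varphi_\Phi)\to H_1\to H_0\to\coker(\varphi_\Phi)\to 0$ forces the first map to be an isomorphism by cardinality, whence $\delta=0$ and $\coker(\varphi_\Phi)\cong H_0$ for free. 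The paper also cites \cite{CS} for the identification $M_{\hat B}/\hat\varphi^\ast(M_{\hat A})\cong H_1$ rather than reproving it via rigid uniformization as you do. Your route---computing $\coker(\hat\varphi^\ast)$ and verifying $u_B(M_{\hat B})\subset(\varphi^\ast)^\vee(\Hom(M_A,\Z))$ through the Raynaud lattice description---is self-contained and illuminates why the connecting map vanishes, but it buys that insight at the cost of the compatibility between the monodromy pairing and the valuation pairing on the uniformizing torus, which (as you rightly flag) is the genuinely delicate ingredient. The paper's approach sidesteps that entirely.
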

\begin{proof} The kernel of $\varphi_k$ is $\cH_k\cong H$. It is
clear that $\ker(\varphi_\Phi)=H_1$. Let $\hat{\varphi}_K:
\hat{B}_K\to \hat{A}_K$ be the isogeny dual to $\varphi_K$. Using
(\ref{eqGrothM}), one obtains a commutative diagram with exact rows
(cf. \cite[p. 8]{Ribet}):
$$
\xymatrix{ 0 \ar[r] & M_{\hat{A}} \ar[r]
\ar[d]^-{\hat{\varphi}^\ast} & \Hom(M_A, \Z)
\ar[d]^-{\Hom(\varphi^\ast, \Z)} \ar[r]
& \Phi_A \ar[d]^-{\varphi_\Phi} \ar[r] & 0 \\
0 \ar[r] & M_{\hat{B}} \ar[r] & \Hom(M_B, \Z) \ar[r] & \Phi_B \ar[r]
& 0.}
$$
From this diagram we get the exact sequence
$$
0\to \ker(\varphi_\Phi)\to
M_{\hat{B}}/\hat{\varphi}^\ast(M_{\hat{A}})\to
\Ext^1_\Z(M_A/\varphi^\ast(M_B), \Z)\to \coker(\varphi_\Phi)\to 0.
$$
Using the exact sequence $0\to \Z\to \Q\to \Q/\Z\to 0$, it is easy
to show that
$$
\Ext^1_\Z(M_A/\varphi^\ast(M_B), \Z)\cong
\Hom(M_A/\varphi^\ast(M_B), \Q/\Z)=: (M_A/\varphi^\ast(M_B))^\vee,
$$
so there is an exact sequence of abelian groups
\begin{equation}\label{eq2}
0\to \ker(\varphi_\Phi)\to
M_{\hat{B}}/\hat{\varphi}^\ast(M_{\hat{A}})\to
(M_A/\varphi^\ast(M_B))^\vee\to \coker(\varphi_\Phi)\to 0.
\end{equation}
So far we have not used the assumption that $A_K$ has toric
reduction. Under this assumption, $B_K$ also has toric reduction,
and $H_0$ is the kernel of $\varphi_t: T_A\to T_B$. Hence
$(M_A/\varphi^\ast(M_B))^\vee\cong H_0$. Next, \cite[Thm. 8.6]{CS}
implies that $M_{\hat{B}}/\hat{\varphi}^\ast(M_{\hat{A}})\cong H_1$.
Thus, we can rewrite (\ref{eq2}) as
$$
0\to \ker(\varphi_\Phi)\to H_1\to H_0\to \coker(\varphi_\Phi)\to 0.
$$
Since $\ker(\varphi_\Phi)= H_1$, this implies that
$\coker(\varphi_\Phi)\cong H_0$.
\end{proof}

\section{Component groups of $J_0(xy)$}

\subsection{Component groups at $x$ and $y$} We return to the
notation in Section \ref{SecCDG}. As we mentioned in
$\S$\ref{ssDMC}, $X_0(xy)$ is smooth over $A[1/xy]$.

\begin{prop}\label{propModelxy}\hfill
\begin{itemize}
\item[(i)] $X_0(xy)_{F_x}$ has a semi-stable model over $\cO_x$ such that $X_0(xy)_{\F_x}$
consists of two irreducible components both isomorphic to
$X_0(y)_{\F_x}\cong \P^1_{\F_q}$ intersecting transversally in $q+1$
points. Two of these singular points have thickness $q+1$, and the
other $q-1$ points have thickness $1$.
\item[(ii)] $X_0(xy)_{F_y}$ has a semi-stable model over $\cO_y$ such that $X_0(xy)_{\F_y}$
consists of two irreducible components both isomorphic to
$X_0(x)_{\F_y}\cong \P^1_{\F_{q^2}}$ intersecting transversally in
$q+1$ points. All these singular points have thickness $1$.
\end{itemize}
\end{prop}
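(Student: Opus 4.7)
The plan is to mimic the Deligne-Rapoport analysis of the $\Gamma_0(xy)$-moduli problem, adapted to the Drinfeld setting; I will give the argument for (i), since (ii) will be formally identical.

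\emph{Structure of the special fiber at $x$.} Since $x$ and $y$ are coprime primes of $A$, a $\Gamma_0(xy)$-structure on a Drinfeld module $\phi$ in characteristic $x$ is a pair $(C_x,C_y)$ of cyclic submodules, and the $\Gamma_0(y)$-part is \'etale over $\cO_x$. For ordinary $\phi$ in characteristic $x$ the group scheme $\phi[x]$ has order $q_x=q$ and fits in a canonical connected-\'etale sequence
$$
0\to \phi[x]^0\to \phi[x]\to \phi[x]^{\et}\to 0,
$$
exhibiting exactly two cyclic $A/x$-submodules (the connected and \'etale parts); for supersingular $\phi$ the group scheme $\phi[x]$ is purely infinitesimal and $C_x=\phi[x]$ is the only possibility. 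Consequently I would construct a semi-stable model over $\cO_x$ whose special fiber has two components $Z$ and $Z'$, parametrizing pairs with $C_x$ the connected, resp.\ \'etale, part of $\phi[x]$; the forgetful map $(\phi,C_x,C_y)\mapsto (\phi,C_y)$ identifies each component with $X_0(y)_{\F_x}$, which has genus $0$ by the Drinfeld genus formula and is therefore $\cong\P^1_{\F_q}$ since its cusps are $F$-rational. The two components meet along the supersingular locus.

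\emph{Counting singular points and thicknesses.} By Lemma \ref{lemPrelim1} there is a unique supersingular $\phi$ over $\overline{\F}_x$, with $j(\phi)=0$ and $\Aut(\phi)=\F_{q^2}^\times$. The supersingular points of $X_0(y)_{\F_x}$ correspond to $\Aut(\phi)$-orbits on the set of $q^2+1$ cyclic $A/y$-submodules of $\phi[y]\cong (A/y)^2$. The crucial observation will be that the scalar action of $\Aut(\phi)\subset\overline{\F}_x^\times$ on $\phi[y]$ commutes with the $A/y\cong\F_{q^2}$-module structure, because $\Aut(\phi)$ is by definition the centralizer of $\phi(A)$; hence $\phi[y]$ becomes a module over $\F_{q^2}\otimes_{\F_q}\F_{q^2}\cong\F_{q^2}\times\F_{q^2}$. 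The idempotent decomposition on the right exhibits exactly two $C_y$ fixed by all of $\Aut(\phi)$, while the remaining $q^2-1$ lines are permuted freely by $\Aut(\phi)/\F_q^\times$, producing $q-1$ additional orbits of size $q+1$. Altogether this gives $2+(q-1)=q+1$ singular points. The thickness at a point $(\phi,C_y)$ should equal $\#(\Aut(\phi,C_y)/\F_q^\times)$ by the standard analysis of the universal deformation of a supersingular Drinfeld module with $\Gamma_0(x)$-structure, giving thickness $q+1$ at the two fully-stabilized points and thickness $1$ at the remaining $q-1$ (consistent with Lemma \ref{lemPrelim3}).

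\emph{Part (ii) and the main obstacle.} At $y$ the analogous model has both components $\cong X_0(x)_{\F_y}\cong\P^1_{\F_{q^2}}$, and the unique supersingular $\phi$ over $\overline{\F}_y$ has $j(\phi)\neq 0$ by Lemma \ref{lemPrelim1}, so $\Aut(\phi)=\F_q^\times$ and every $C_x\subset\phi[x]$ has $\Aut(\phi,C_x)=\F_q^\times$ by Lemma \ref{lemPrelim3}; we therefore get $q_x+1=q+1$ singular points, all of thickness $1$. The non-routine input I expect to be delicate is the existence of the desired regular semi-stable model together with the identification of the local equation at a supersingular crossing as $uv=\pi_v^{m}$ with $m=\#(\Aut(\phi,C_?)/\F_q^\times)$: this is the Drinfeld-modular analogue of Deligne-Rapoport, which should be extractable from Drinfeld's original treatment (supplemented by the compactification results of van der Put-Top, and Gekeler's local computations), but packaging it in the precise form used here is the step where the bulk of the work lies.
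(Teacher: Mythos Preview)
Your approach matches the paper's structurally: both invoke Gekeler's Deligne--Rapoport style analysis for the semi-stable model and the thickness formula $m=\#(\Aut(\phi,C)/\F_q^\times)$, and both reduce (i) to understanding the $\F_{q^2}^\times$-orbits on the $q^2+1$ cyclic $A/y$-subgroups of the unique supersingular $\phi$. The counting step is where you diverge. The paper first deduces that there are $q+1$ crossings indirectly, using that flatness forces the arithmetic genus of the special fiber to equal $g(X_0(xy)_F)=q$; it then solves the linear system $(q+1)t+s=q^2+1$, $t+s=q+1$ to get $s=2$ subgroups with full stabilizer and $t=q-1$ free orbits. Your $\F_{q^2}\otimes_{\F_q}\F_{q^2}$-module argument is more direct and actually pinpoints the two distinguished $C_y$'s as the idempotent eigenlines, without needing the genus of $X_0(xy)_F$ as input; the paper's route, in exchange, carries a built-in consistency check against the genus formula. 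Two small points to tighten: the group scheme $\phi[x]$ has order $q_x^2$, not $q_x$ (only its \'etale quotient has order $q$; this does not affect your conclusion about the two cyclic $A/x$-subgroups), and your idempotent decomposition tacitly assumes that $\phi[y]$ is \emph{free of rank one} over $\F_{q^2}\times\F_{q^2}$, i.e.\ that neither eigenspace is all of $\phi[y]$. This needs a one-line justification --- for instance, no $u\in\F_{q^2}\setminus\F_q$ can act on $\phi[y]$ as $\phi_a$ with $\deg a<\deg y$, or equivalently $\End(\phi)/y\End(\phi)\cong M_2(A/y)$ acts faithfully on $\phi[y]$ since the quaternion algebra is split at $y$.
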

\begin{proof}
The fact that $X_0(xy)_F$ has a model over $\cO_x$ and $\cO_y$ with
special fibers of the stated form follows from the same argument as
in the case of $X_0(v)_{F}$ over $\cO_v$ ($v\in |F|-\infty$)
discussed in \cite[$\S$5]{GekelerUber}. We only clarify why the
number of singular points and their thickness are as stated.

(i) The special fiber $X_0(xy)_{\F_x}$ consists of two copies of $X_0(y)_{\F_x}$.
The set of points $Y_0(y)(\overline{\F}_x)$ is in bijection with the isomorphism
classes of pairs $(\phi, C_y)$, where $\phi$ is a rank-$2$ Drinfeld
$A$-module over $\overline{\F}_x$ and $C_y\cong A/y$ is a cyclic
subgroup of $\phi$. The two copies of $X_0(y)_{\F_x}$ intersect
exactly at the points corresponding to $(\phi, C_y)$ with $\phi$
supersingular; more precisely, $(\phi, C_y)$ on the first copy is
identified with $(\phi^{(x)}, C_y^{(x)})$ on the second copy where
$\phi^{(x)}$ is the image of $\phi$ under the Frobenius isogeny and
$C_y^{(x)}$ is subgroup of $\phi^{(x)}$ which is the image of $C_y$,
cf. \cite{GekelerUber}.

Now, by Lemma \ref{lemPrelim1}, up to an isomorphism over $\overline{\F}_x$, 
there is a unique supersingular
Drinfeld module $\phi$ in characteristic $x$ and $j(\phi)=0$. It is
easy to see that $\phi$ has $q_y+1=q^2+1$ cyclic subgroups
isomorphic to $A/y$, so the set $S=\{(\phi, C_y)\ |\ C_y\subset
\phi[y]\}$ has cardinality $q^2+1$. By Lemma \ref{lemPrelim2},
$\Aut(\phi)\cong \F_{q^2}^\times$. This group naturally acts $S$,
and the orbits are in bijection with the singular points of
$X_0(xy)_{\F_x}$. Since the genus of $X_0(xy)_F$ is $q$, the
arithmetic genus of $X_0(xy)_{\F_x}$ is also $q$ due to the flatness
of $X_0(xy)\to \Spec(A)$; see \cite[Cor. III.9.10]{Hartshorne}.
Using the fact that the genus of $X_0(y)_F$ is zero, a simple
calculation shows that the number of singular points of
$X_0(xy)_{\F_x}$ is $q+1$, cf. \cite[p. 298]{Hartshorne}. Next, by
Lemma \ref{lemPrelim3}, the stabilizer in $\Aut(\phi)$ of $(\phi,
C_y)$ is either $\F_q^\times$ or $\F_{q^2}^\times$. Let $s$ be the
number of pairs $(\phi, C_y)$ with stabilizer $\F_{q^2}^\times$. Let
$t$ be the number of orbits of pairs with stabilizers $\F_q^\times$;
each such orbit consists of $\#(\F_{q^2}^\times/\F_q^\times)=q+1$
pairs $(\phi, C_y)$. Hence we have
$$
(q+1)t+s=q^2+1 \quad \text{and}\quad t+s=q+1.
$$
This implies that $t=q-1$ and $s=2$. Finally, as is explained in
\cite{GekelerUber}, the thickness of the singular point
corresponding to an isomorphism class of $(\phi, C_y)$ is equal to
$\#(\Aut(\phi, C_y)/\F_q^\times)$.

(ii) Similar to the previous case, $X_0(xy)_{\F_y}$ consists of two
copies of $X_0(x)_{\F_y}\cong \P^1_{\F_{q^2}}$. The two copies of
$X_0(x)_{\F_y}$ intersect exactly at the points corresponding to the
isomorphism classes of pairs $(\phi, C_x)$ with $\phi$
supersingular. Again by Lemma \ref{lemPrelim1}, up to an isomorphism over $\overline{\F}_y$, there is a unique 
supersingular $\phi$ and $j(\phi)\neq 0$. Hence, by Lemma
\ref{lemPrelim3}, $\Aut(\phi, C_x)\cong \F_q^\times$ for any $C_x$.
There are $q_x+1=q+1$ cyclic subgroups in $\phi$ isomorphic to
$A/x$. The rest of the argument is the same as in the previous case.
\end{proof}

\begin{thm}\label{thmCGxy} Let $\Phi_v$ denote the group of connected components
of $J_0(xy)$ at $v\in |F|$. Let $Z$ and $Z'$ be the
irreducible components in Proposition \ref{propModelxy} with the
convention that the reduction of $[\infty]$ lies on $Z'$. Let
$z=Z-Z'$.
\begin{itemize}
\item[(i)] $\Phi_x\cong \Z/(q^2+1)(q+1)\Z$.
\item[(ii)]
$\Phi_y\cong\Z/(q+1)\Z$.
\item[(iii)] Under the canonical specialization
map $\phi_x:\cC\to \Phi_x$ we have 
$$\phi_x(c_x)=0\quad \text{and} \quad \phi_x(c_y)=z.$$ In particular, $q^2+1$ divides the
order of $c_y$.
\item[(iv)] Under the canonical specialization map
$\phi_y:\cC\to \Phi_y$ we have 
$$\phi_y(c_x)=z\quad \text{and} \quad \phi_y(c_y)=0.
$$ In particular, $q+1$
divides the order of $c_x$.
\end{itemize}
\end{thm}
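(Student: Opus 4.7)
Parts (i) and (ii) follow by plugging the data of Proposition \ref{propModelxy} into Theorem \ref{thmCG}. At $x$ one has $n=q+1$ singular points and $m=q+1$, yielding $|\Phi_x|=m(m(n-2)+2)=(q+1)(q^2+1)$ with $\Phi_x$ cyclic; at $y$ one has $n=q+1$ and $m=1$, giving $\Phi_y\cong \Z/(q+1)\Z$, generated by the class $z=Z-Z'$.

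For (iii) and (iv) the substance lies in determining, for each of the four cusps $[\infty], [0], [x], [y]$, the component of the special fiber on which it specializes; once this is known, the canonical specialization map is read off from the recipe of $\S$\ref{ssRaynaud}. The key input is the action of the Atkin-Lehner involutions on the components of $X_0(xy)_{\F_x}$ and $X_0(xy)_{\F_y}$. From the modular description of the components used in the proof of Proposition \ref{propModelxy}, the involution $W_x$ exchanges the two components of $X_0(xy)_{\F_x}$, because it swaps the étale and connected parts of the level-$x$ structure on $(\phi, C_{xy})$, while $W_y$ preserves each component at $x$ since $y$ is coprime to $x$. At $y$ the roles of $W_x$ and $W_y$ are interchanged. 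Combining this with (\ref{eq-AL}) and the convention that $[\infty]$ reduces to $Z'$: at $x$, the cusps $[\infty]$ and $[x]=W_y([\infty])$ reduce to $Z'$, whereas $[y]=W_x([\infty])$ and $[0]=W_{xy}([\infty])$ reduce to $Z$; at $y$, $[\infty]$ and $[y]=W_x([\infty])$ reduce to $Z'$, while $[x]=W_y([\infty])$ and $[0]$ reduce to $Z$.

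Applying the explicit formula from $\S$\ref{ssRaynaud} for the specialization map (using that the cusps, being smooth points, specialize to smooth points of the semistable model and hence to well-defined components of the dual graph), one obtains directly $\phi_x(c_x)=Z'-Z'=0$, $\phi_x(c_y)=Z-Z'=z$, $\phi_y(c_x)=z$ and $\phi_y(c_y)=0$. To convert these into statements about orders: at $y$, $\Phi_y$ is cyclic of order $q+1$ generated by $z$, so $\phi_y(c_x)$ has order $q+1$ and hence $(q+1)\mid \ord(c_x)$. At $x$, Theorem \ref{thmCG}(ii) gives $z=(q+1)e_{m-1}$ in the cyclic group $\Phi_x$ of order $(q+1)(q^2+1)$, so $z$ has order exactly $q^2+1$ and $(q^2+1)\mid \ord(c_y)$.

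The main technical obstacle is the moduli-theoretic identification of how $W_x$ (respectively $W_y$) acts on the components at $x$ (respectively at $y$); this is the analogue in the Drinfeld setting of the classical Deligne-Rapoport picture for $X_0(pq)$ at a prime dividing the level, and rests on the description of points of the intersection locus as supersingular pairs $(\phi, C_{xy})$ given in Proposition \ref{propModelxy}. Everything else is bookkeeping with the relations (\ref{eq-AL}) and the explicit generators supplied by Theorem \ref{thmCG}.
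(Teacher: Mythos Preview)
Your proof is correct and follows essentially the same line as the paper's: parts (i) and (ii) come from feeding the data of Proposition~\ref{propModelxy} into Theorem~\ref{thmCG}, and parts (iii) and (iv) are obtained by determining on which component each cusp specializes via the action of the Atkin--Lehner involutions, exactly as the paper does. The only cosmetic differences are that the paper cites \cite[(5.3)]{GekelerUber} for the fact that $W_x$ interchanges the two components at $x$ (whereas you sketch the modular reason), and the paper cites \cite{vdPT} for the fact that the cusps reduce to smooth points; your explicit identification $z=(q+1)e_{m-1}$ in $\Phi_x$ via Theorem~\ref{thmCG}(ii) is a bit more detailed than the paper's terse ``by Theorem~\ref{thmCG} we know that $z$ has order $q^2+1$,'' but the content is identical.
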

\begin{proof} (i) and (ii) follow from
Theorem \ref{thmCG} and Proposition \ref{propModelxy}.

(iii) The cusps reduce to distinct points in the smooth locus of
$X_0(xy)_{\F_x}$, cf. \cite{vdPT}. Since by Theorem \ref{thmCG} we
know that $z$ has order $q^2+1$ in the component group $\Phi_x$, it
is enough to show that the reductions of $[y]$ and $[\infty]$ lie on
distinct components $Z$ and $Z'$ in $X_0(xy)_{\F_x}$, but the
reductions of $[x]$ and $[\infty]$ lie on the same component. The
involution $W_x$ interchanges the two components $X_0(y)_{\F_x}$,
cf. \cite[(5.3)]{GekelerUber}. Since $W_x([\infty])=[y]$, the
reductions of $[\infty]$ and $[y]$ lie on distinct components.
On the other hand, $W_y$ acts on $X_0(xy)_{\F_y}$ by
acting on each component $X_0(y)_{\F_x}$ separately, without
interchanging them. Since $W_y([\infty])=[x]$, the reductions of
$[\infty]$ and $[x]$ lie on the same component. 

(iv) The argument is similar to (iii). Here $W_y$ interchanges the two
components $X_0(x)_{\F_y}$ of $X_0(xy)_{\F_y}$ and $W_x$ maps the
components to themselves. Hence $[\infty]$ and $[y]$ lie on one
component and $[0]$ and $[x]$ on the other component. 
\end{proof}

\begin{thm}\label{thmCDG} The cuspidal divisor group 
$$
\cC\cong \Z/(q+1)\Z\oplus \Z/(q^2+1)\Z
$$
is the direct sum of the cyclic subgroups generated by $c_x$ and $c_y$, 
which have orders $(q+1)$ and $(q^2+1)$, respectively. 
$($Note that $\cC$ is cyclic if $q$ is even, but it is not cyclic if $q$ is odd. $)$
\end{thm}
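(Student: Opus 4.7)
The plan is to pin down the orders of the two generators $c_x$ and $c_y$ exactly and then show that the sum $\langle c_x\rangle+\langle c_y\rangle$ is direct by specializing to a single component group. All of the needed numerical information has already been assembled.

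\textbf{Step 1: exact orders of $c_x$ and $c_y$.} By Lemma \ref{lemFApprx} the order of $c_x$ divides $q+1$ and the order of $c_y$ divides $q^2+1$. In the other direction, Theorem \ref{thmCGxy}(iv) gives $\phi_y(c_x)=z\in\Phi_y\cong\Z/(q+1)\Z$, and by the description of $\Phi_y$ via Theorem \ref{thmCG} (with $m=1$, $n=q+1$) this $z$ generates $\Phi_y$, so the order of $c_x$ is at least $q+1$. Similarly Theorem \ref{thmCGxy}(iii) shows $\phi_x(c_y)=z\in\Phi_x$ is an element of order $q^2+1$ (this is the class of $Z-Z'$, which has order $q^2+1$ by Theorem \ref{thmCG} applied with $m=q+1$, $n=q+1$), so $c_y$ has order at least $q^2+1$. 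Combining the two bounds, $c_x$ has order exactly $q+1$ and $c_y$ has order exactly $q^2+1$.

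\textbf{Step 2: the sum is direct.} Since by Lemma \ref{lemFApprx} the group $\cC$ is generated by $c_x$ and $c_y$, it suffices to show $\langle c_x\rangle\cap\langle c_y\rangle=0$. Suppose $\alpha c_x=\beta c_y$ with $0\le\alpha<q+1$ and $0\le\beta<q^2+1$. Apply $\phi_x$: the left-hand side vanishes because $\phi_x(c_x)=0$ by Theorem \ref{thmCGxy}(iii), while the right-hand side equals $\beta z$ in $\Phi_x$. Since the order of $z$ in $\Phi_x$ is $q^2+1$, we deduce $(q^2+1)\mid\beta$, forcing $\beta=0$ and hence $\alpha c_x=0$, i.e.\ $\alpha=0$. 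This gives the decomposition
\[
\cC=\langle c_x\rangle\oplus\langle c_y\rangle\cong\Z/(q+1)\Z\oplus\Z/(q^2+1)\Z.
\]

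\textbf{Step 3: the cyclic/non-cyclic dichotomy.} A direct sum of two cyclic groups is itself cyclic precisely when the orders of the summands are coprime. By Lemma \ref{lem_gcd}, $\gcd(q+1,q^2+1)$ equals $1$ when $q$ is even and $2$ when $q$ is odd, so $\cC$ is cyclic in the even case and non-cyclic in the odd case.

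I do not expect a genuine obstacle here: the whole difficulty was already absorbed into the computation of the component groups $\Phi_x,\Phi_y$ and of the images of $c_x,c_y$ under the specialization maps in Theorem \ref{thmCGxy}. The only subtlety worth flagging is that, when $q$ is odd, the relations (\ref{eq-rel1})--(\ref{eq-rel2}) alone do not immediately rule out that $\langle c_x\rangle$ and $\langle c_y\rangle$ share their unique element of order $2$; this is exactly what the specialization argument in Step 2 excludes.
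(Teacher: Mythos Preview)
Your proof is correct and follows essentially the same strategy as the paper: pin down the orders of $c_x$ and $c_y$ via Lemma~\ref{lemFApprx} and Theorem~\ref{thmCGxy}, then use a specialization map to rule out a nontrivial intersection of $\langle c_x\rangle$ and $\langle c_y\rangle$. The only difference is that you specialize via $\phi_x$ (exploiting $\phi_x(c_x)=0$), whereas the paper specializes via $\phi_y$ after first invoking Lemma~\ref{lem_gcd} to reduce to the unique potential overlap $\frac{q+1}{2}c_x=\frac{q^2+1}{2}c_y$ in the odd-$q$ case; your route is marginally cleaner since it handles both parities uniformly.
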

\begin{proof} By Lemma
\ref{lemFApprx} and Theorem \ref{thmCGxy}, $\cC$ is generated by $c_x$ and $c_y$, which have 
orders $(q+1)$ and $(q^2+1)$, respectively. If the subgroup  of $\cC$
generated by $c_x$ non-trivially intersects with the subgroup generated 
by $c_y$, then, by Lemma \ref{lem_gcd}, $q$ must be odd and $\frac{q+1}{2}c_x=\frac{q^2+1}{2}c_y$. 
Applying $\phi_y$ to both sides of this equality, 
we get $\frac{q+1}{2}z=0$, which is a contradiction since $z$ generates 
$\Phi_y\cong \Z/(q+1)\Z$. 
\end{proof}

\begin{rem}
The divisor class $c_0$ has order $(q+1)(q^2+1)$ (resp. $(q+1)(q^2+1)/2$) 
if $q$ is even (resp. odd). 
\end{rem}


\subsection{Component group at $\infty$}\label{ssInfty} To obtain a model of
$X_0(xy)_{\Fi}$ over $\cO_\infty$, instead of relying on the moduli
interpretation of $X_0(xy)$, one has to use the existence of
analytic uniformization for this curve; see \cite[$\S$4.2]{PapAIF}.
As far as the structure of the special fiber $X_0(xy)_{\F_\infty}$
is concerned, it is more natural to compute the dual graph of
$X_0(xy)_{\F_\infty}$ directly using the quotient $\G_0(xy)\bs\cT$
of the Bruhat-Tits tree $\cT$ of $\PGL_2(\Fi)$. For the definition
of $\cT$, and more generally for the basic theory of trees and
groups acting on trees, we refer to \cite{SerreT}.

The quotient graph $\G_0(xy)\bs\cT$ was first computed by Gekeler
\cite[(5.2)]{GekelerKF}. For our purposes we will need to know the
relative position of the cusps on $\G_0(xy)\bs\cT$ and also the
stabilizers of the edges. To obtain this more detailed information,
and for the general sake of completeness, we recompute
$\G_0(xy)\bs\cT$ in this subsection using the method in \cite{GN}.

Denote
$$
G_0=\GL_2(\F_q)
$$
and
$$
G_i=\left\{\begin{pmatrix} a & b \\ 0 & d\end{pmatrix}\in \GL_2(A)\
| \ \deg(b)\leq i\right\}, \quad i\geq 1.
$$
As is explained in \cite{GN}, $\G_0(xy)\bs \cT$ can be constructed
in ``layers'', where the vertices of the $i$th layer (in \cite{GN}
called \textit{type-$i$ vertices}) are the orbits
$$
X_i:= G_i\bs \P^1(A/xy)
$$
and the edges connecting type-$i$ vertices to type-$(i+1)$ vertices, called \textit{type-$i$ edges}, 
are the orbits
$$
Y_i:= (G_i\cap G_{i+1})\bs \P^1(A/xy).
$$
There are obvious maps $Y_i\to X_i$, $Y_i\to X_{i+1}$ and $X_i\to X_{i+1}$ which are used to define the adjacencies 
of vertices in $X_i$ and $X_{i+1}$; see \cite[1.7]{GN}. The graph $\G_0(xy)\bs \cT$ is isomorphic 
to the graph with set of vertices $\bigsqcup_{i\geq 0} X_i$ and set of edges $\bigsqcup_{i\geq 0} Y_i$ with 
the adjacencies defined by these maps. 

Note that $\P^1(A/xy)=\P^1(\F_x)\times \P^1(\F_y)$. We will
represent the elements of $\P^1(A/xy)$ as couples $[P; Q]$ where
$P\in \P^1(\F_x)$ and $Q\in \P^1(\F_y)$. With this notation, $G_i$
acts diagonally on $[P; Q]$ via its images in $\GL_2(\F_x)$ and
$\GL_2(\F_y)$, respectively.

The group $G_0$ acting on $\P^1(A/xy)$ has $3$ orbits, whose representatives
are
$$
[\begin{pmatrix} 1\\ 0\end{pmatrix}; \begin{pmatrix} 1\\
0\end{pmatrix}],\quad [\begin{pmatrix} 1\\ 0\end{pmatrix};
\begin{pmatrix} 0\\ 1\end{pmatrix}],\quad [\begin{pmatrix} 1\\ 0\end{pmatrix}; \begin{pmatrix} x\\ 1\end{pmatrix}],
$$
where in the last element we write $x$ for the image in $\F_y$ of
the monic generator of $x$ under the canonical homomorphism $A\to
A/y$. The orbit of $[\begin{pmatrix} 1\\
0\end{pmatrix}; \begin{pmatrix} 1\\ 0\end{pmatrix}]$ has length
$q+1$, the orbit of $[\begin{pmatrix} 1\\ 0\end{pmatrix};
\begin{pmatrix} 0\\ 1\end{pmatrix}]$
has length $q(q+1)$, and the orbit of $[\begin{pmatrix} 1\\
0\end{pmatrix}; \begin{pmatrix} x\\ 1\end{pmatrix}]$ has length
$q(q^2-1)$, cf. \cite[Prop. 2.10]{GN}.
Next, note that $G_0\cap G_1$ is the subgroup $B$ of the
upper-triangular matrices in $\GL_2(\F_q)$. The $G_0$-orbit of
$[\begin{pmatrix} 1\\ 0\end{pmatrix}; \begin{pmatrix} 1\\
0\end{pmatrix}]$ splits into two $B$-orbits with representatives:
\begin{equation}\label{eqCR1}
[\begin{pmatrix} 1\\ 0\end{pmatrix}; \begin{pmatrix} 1\\
0\end{pmatrix}]\quad\text{and}\quad [\begin{pmatrix} 0\\
1\end{pmatrix}; \begin{pmatrix} 0\\ 1\end{pmatrix}].
\end{equation}
The lengths of these $B$-orbits are $1$ and $q$, respectively. The
$G_0$-orbit of $[\begin{pmatrix} 1\\ 0\end{pmatrix}; \begin{pmatrix}
0\\ 1\end{pmatrix}]$ splits into three $B$-orbits with
representatives:
\begin{equation}\label{eqCR2}
[\begin{pmatrix} 1\\ 0\end{pmatrix}; \begin{pmatrix} 0\\
1\end{pmatrix}],\quad [\begin{pmatrix} 0\\ 1\end{pmatrix};
\begin{pmatrix} 1\\ 0\end{pmatrix}], \quad [\begin{pmatrix} 1\\ 1\end{pmatrix}; \begin{pmatrix} 0\\ 1\end{pmatrix}].
\end{equation}
The lengths of these $B$-orbits are $q$, $q$, $q(q-1)$,
respectively. Finally, the $G_0$-orbit of $[\begin{pmatrix} 1\\
0\end{pmatrix}; \begin{pmatrix} x\\ 1\end{pmatrix}]$ splits into
$(q+1)$ $B$-orbits each of length $q(q-1)$. The previous statements
can be deduced from Proposition 2.11 in \cite{GN}.
It turns out that the elements of $\P^1(\F_x)\times \P^1(\F_y)$
listed in (\ref{eqCR1}) and (\ref{eqCR2}) combined form a complete
set of $G_1$-orbit representatives. For $i\geq 1$, the set of
$G_i$-orbit representatives obviously contains a complete set of
$G_{i+1}$-orbit representatives. A small calculation shows that
\begin{equation}\label{eqCR3}
[\begin{pmatrix} 1\\ 0\end{pmatrix}; \begin{pmatrix} 1\\
0\end{pmatrix}],\quad [\begin{pmatrix} 0\\ 1\end{pmatrix}; \begin{pmatrix} 0\\ 1\end{pmatrix}],\quad [\begin{pmatrix} 1\\
0\end{pmatrix}; \begin{pmatrix} 0\\ 1\end{pmatrix}],\quad [\begin{pmatrix} 0\\ 1\end{pmatrix}; \begin{pmatrix} 1\\
0\end{pmatrix}]
\end{equation}
is a complete set of $G_i$-orbit representatives for any $i\geq 2$.
Moreover, the elements $[\begin{pmatrix} 1\\ 1\end{pmatrix};
\begin{pmatrix} 0\\ 1\end{pmatrix}]$
and $[\begin{pmatrix} 0\\ 1\end{pmatrix}; \begin{pmatrix} 0\\
1\end{pmatrix}]$ are in the same $G_2$-orbit. We recognize the
elements in (\ref{eqCR3}) as the cusps $[\infty], [0], [x], [y]$,
respectively. 
Overall, the structure of $\G_0(xy)\bs \cT$ is described by the
diagram in Figure \ref{FigDiagram}. In the diagram the broken line $- - -$ 
indicates that there are $(q-1)$ distinct edges joining the
corresponding vertices, and an arrow $\rightarrow$ indicates an infinite
half-line.
\begin{figure}
$$
\xymatrix{
& {[\begin{pmatrix} 1\\ 0\end{pmatrix}; \begin{pmatrix} 1\\ 0\end{pmatrix}]} \ar@{-}[r] & {[\begin{pmatrix} 1\\ 0\end{pmatrix}; \begin{pmatrix} 1\\ 0\end{pmatrix}]}\ar[rr]^-{[\infty]} & & \\
{[\begin{pmatrix} 1\\ 0\end{pmatrix}; \begin{pmatrix} 1\\
0\end{pmatrix}]} \ar@{-}[r] \ar@{-}[ur] & {[\begin{pmatrix} 0\\
1\end{pmatrix}; \begin{pmatrix} 0\\ 1\end{pmatrix}]}
\ar@{~}[r]& {[\begin{pmatrix} 0\\ 1\end{pmatrix}; \begin{pmatrix} 0\\ 1\end{pmatrix}]}\ar[rr]^-{[0]} & & \\
{[\begin{pmatrix} 1\\ 0\end{pmatrix}; \begin{pmatrix} x\\
1\end{pmatrix}]} \ar@{--}[r] \ar@{-}[ur] \ar@{-}[dr] &
{[\begin{pmatrix} 1\\ 1\end{pmatrix};
\begin{pmatrix} 0\\ 1\end{pmatrix}]} \ar@{-}[ur] &  & \\
{[\begin{pmatrix} 1\\ 0\end{pmatrix}; \begin{pmatrix} 0\\
1\end{pmatrix}]} \ar@{~}[r] \ar@{-}[ur] \ar@{-}[dr] &
{[\begin{pmatrix} 1\\ 0\end{pmatrix}; \begin{pmatrix} 0\\
1\end{pmatrix}]} \ar@{-}[r]& {[\begin{pmatrix} 1\\ 0\end{pmatrix};
\begin{pmatrix} 0\\ 1\end{pmatrix}]} \ar[rr]^-{[x]} & & \\
& {[\begin{pmatrix} 0\\ 1\end{pmatrix}; \begin{pmatrix} 1\\ 0\end{pmatrix}]} \ar@{-}[r]& {[\begin{pmatrix} 0\\ 1\end{pmatrix}; \begin{pmatrix} 1\\ 0\end{pmatrix}]} \ar[rr]^-{[y]} & & \\
X_0 & X_1 & X_2}
$$
\caption{$\G_0(xy)\bs \cT$}\label{FigDiagram}
\end{figure}

Now we compute the stabilizers of the edges. 
Let $e$ be an edge in $\G_0(xy)\bs \cT$ of type $i$. Let $$O(e)=(G_i\cap G_{i+1})[P;Q]$$ be its corresponding orbit 
in $(G_i\cap G_{i+1})\bs \P^1(A/xy)$. Then for a preimage $\tilde{e}$ of $e$ in $\cT$ we have 
$$
\# \Stab_{\G_0(xy)}(\tilde{e})=\# \Stab_{G_i\cap G_{i+1}}([P;Q])=\frac{\# (G_i\cap G_{i+1})}{\# O(e)}.
$$
Using this observation, we conclude from our previous discussion that the edges connecting  
$[\begin{pmatrix} 1\\ 0\end{pmatrix}; \begin{pmatrix} x\\ 1\end{pmatrix}]\in X_0$ to any vertex in $X_1$ have 
preimages whose stabilizers have order $\# B/q(q-1)=q-1$. 
The preimages of the edges connecting $[\begin{pmatrix} 1\\ 0\end{pmatrix}; \begin{pmatrix} 0\\ 1\end{pmatrix}]\in X_0$ 
to $[\begin{pmatrix} 1\\ 1\end{pmatrix}; \begin{pmatrix} 0\\ 1\end{pmatrix}]\in X_1$ and 
$[\begin{pmatrix} 1\\ 0\end{pmatrix}; \begin{pmatrix} 0\\ 1\end{pmatrix}]\in X_1$ have stabilizers of 
orders $q-1$ and $(q-1)^2$, respectively. 
(Note that if a stabilizer has order $(q-1)$ then it is equal to the center $Z(\G_0(xy))\cong \F_q^\times$ of $\G_0(xy)$, 
as the center is a subgroup of any stabilizer.) The \textit{valency} of a vertex $v$ in a graph without loops 
is the number of distinct edges having $v$ as an endpoint. (A
loop is an edge whose endpoints are the same.) 
Consider the vertex $v=[\begin{pmatrix} 1\\ 1\end{pmatrix}; \begin{pmatrix} 0\\ 1\end{pmatrix}]\in X_1$. Its 
valency is $(q+1)$. Let $\tilde{v}$ be a preimage of $v$ in $\cT$. Since the valency of $\tilde{v}$ is also 
$q+1$, $\Stab_{\G_0(xy)}(\tilde{v})$ acts trivially on all edges having $\tilde{v}$ as an endpoint. Hence 
the stabilizer of any such edge is equal to $\Stab_{\G_0(xy)}(\tilde{v})$. We already determined that the 
stabilizer of a preimage of an edge connecting $v$ to a type-$0$ vertex is $\F_q^\times$. This 
implies that the stabilizer in $\G_0(xy)$ of a preimage of the edge connecting $v$ to 
$[\begin{pmatrix} 0\\ 1\end{pmatrix}; \begin{pmatrix} 0\\ 1\end{pmatrix}]\in X_2$ is also $\F_q^\times$. 
Finally, consider the vertex $w=[\begin{pmatrix} 0\\ 1\end{pmatrix}; \begin{pmatrix} 0\\ 1\end{pmatrix}]\in X_1$. 
Its valency is $3$. 
Let $S, S_1, S_2, S_3$ be the orders of stabilizers in $\G_0(xy)$ of a preimage $\tilde{w}$ of $w$ in $\cT$, 
and the edges connecting $w$ to 
$[\begin{pmatrix} 1\\ 0\end{pmatrix}; \begin{pmatrix} 1\\ 0\end{pmatrix}]\in X_0$, 
$[\begin{pmatrix} 1\\ 0\end{pmatrix}; \begin{pmatrix} x\\ 1\end{pmatrix}]\in X_0$, 
$[\begin{pmatrix} 0\\ 1\end{pmatrix}; \begin{pmatrix} 0\\ 1\end{pmatrix}]\in X_2$, respectively. 
From our discussion of the lengths of orbits of type-$0$ edges, we have 
$S_1=(q-1)^2$ and $S_2=(q-1)$. Obviously, $S_i$'s divide $S$. On the other hand, counting the lengths of orbits 
of $\Stab_{\G_0(xy)}(\tilde{w})$ acting on the set of (non-oriented) 
edges in $\cT$ having $\tilde{w}$ as an endpoint, 
we get 
$$
q+1=\frac{S}{S_1}+\frac{S}{S_2} +\frac{S}{S_3}=\frac{S}{(q-1)^2}+\frac{S}{(q-1)} +\frac{S}{S_3}.  
$$
This implies $S=S_3=(q-1)^2$.  To summarize, in 
Figure \ref{FigDiagram} a wavy line $\sim$ indicates that a preimage
of the corresponding edge in $\cT$ has a stabilizer in $\G_0(xy)$ of
order $(q-1)^2$. The edges connecting $[\begin{pmatrix}
1\\ 0\end{pmatrix}; \begin{pmatrix} x\\ 1\end{pmatrix}]$ or $[\begin{pmatrix} 1\\ 1\end{pmatrix}; \begin{pmatrix} 0\\
1\end{pmatrix}]$ to any other vertex have preimages in $\cT$ whose
stabilizers in $\G_0(xy)$ are isomorphic to $\F_q^\times$ .

\vspace{0.1in}

Now from \cite[$\S$4.2]{PapAIF} one deduces the following.
The quotient graph $\G_0(xy)\bs \cT$, without the infinite half-lines, is the dual
graph of the special fiber of a semi-stable model of $X_0(xy)_{\Fi}$
over $\Spec(\cO_\infty)$. The special fiber $X_0(xy)_{\F_\infty}$ has $6$ irreducible
components $Z, Z', E, E', G, G'$, all isomorphic to $\P^1_{\F_q}$,
such that $Z$ and $Z'$ intersect in $q-1$ points, $E$ intersects $Z$
and $E'$, $E'$ intersects $Z'$ and $E$, $G$ intersects $Z$ and $G'$,
$G'$ intersects $Z'$ and $G$.  Moreover, all intersection points are ordinary double singularities. 
By \cite[Prop. 4.3]{PapAIF}, the thickness of the singular point corresponding to 
an edge $e\in \G_0(xy)\bs\cT$ is 
$$
\#(\Stab_{\G_0(xy)}(\tilde{e})/\F_q^\times),
$$ 
hence all intersection points on $Z$ or
$Z'$ have thickness $1$, but the intersection points of $E$ and $E'$,
and of $G$ and $G'$ have thickness $(q-1)$, cf. Figure \ref{Fig1}.
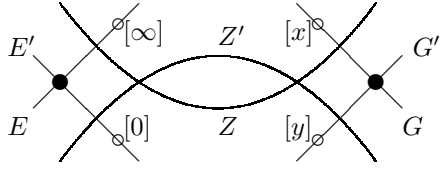
\begin{figure}
\begin{picture}(80,50)
\qbezier(10,10)(40,50)(70,10)\qbezier(10,40)(40,0)(70,40)
\put(5,20){\line(1,1){20}}\put(5,30){\line(1,-1){20}}
\put(55,10){\line(1,1){20}}\put(55,40){\line(1,-1){20}}
\put(10,25){\circle*{3}}\put(70,25){\circle*{3}}
\put(40,15){$Z$}\put(40,32){$Z'$}\put(0,15){$E$}\put(0,30){$E'$}
\put(75,15){$G$}\put(77,30){$G'$}
\put(21,36){\circle{2}}\put(22,33){$[\infty]$}
\put(21,14){\circle{2}}\put(22,15){$[0]$}
\put(59,36){\circle{2}}\put(52.5,33){$[x]$}\put(59,14){\circle{2}}\put(52.5,15){$[y]$}
\end{picture}
\caption{$X_0(xy)_{\F_\infty}$ for $q=3$}\label{Fig1}
\end{figure}
From the structure of $\G_0(xy)\bs\cT$, one also concludes that the
reductions of the cusps are smooth points in $X_0(xy)_{\F_\infty}$.
Moreover, $[\infty]$, $[0]$, $[x]$, $[y]$ reduce to points on $E$,
$E'$, $G$, $G'$ respectively.

Blowing up $X_0(xy)_{\cO_\infty}$ at the intersection points of
$E$, $E'$, and $G$, $G'$, $(q-2)$-times each, we obtain the minimal
regular model of $X_0(xy)_F$ over $\Spec(\cO_\infty)$. This is a
curve of the type discussed in $\S$\ref{ssRaynaud} with $m=n=(q+1)$,
and we enumerate its irreducible components so that $E_1=E$,
$E_q=E'$, $G_1=G$, $G_q=G'$.

\begin{thm}\label{thmCGinfty} Let $\phi_\infty:\cC\to \Phi_\infty$ denote the
canonical specialization map.
\begin{enumerate}
\item[(i)] $\Phi_\infty\cong \Z/(q^2+1)(q+1)\Z$.
\item[(ii)] $\phi_\infty(c_x)=(q^2+1)e_q$ and $\phi_\infty(c_y)=-q(q+1)e_q=(q^3+1)e_q$. 
\item[(iii)] If $q$ is even, then $\phi_\infty:\cC\xrightarrow{\sim} \Phi_\infty$ 
is an isomorphism. 
\item[(iv)]If $q$ is odd, then there is an
exact sequence
$$
0\to \Z/2\Z\to \cC\xrightarrow{\phi_\infty}\Phi_\infty\to \Z/2\Z\to
0.
$$
\end{enumerate}
\end{thm}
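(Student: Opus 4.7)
The plan is to apply Theorem \ref{thmCG} to the minimal regular model of $X_0(xy)$ over $\cO_\infty$ described in $\S$\ref{ssInfty}, whose special fiber is of the type considered in $\S$\ref{ssRaynaud} with $n = q+1$ paths between $Z$ and $Z'$ ($q-1$ direct transverse intersections of thickness $1$, plus the two chains through $E_1, \ldots, E_q$ and $G_1, \ldots, G_q$) and with thickness parameter $m = q+1$ at the two thick endpoints. Part (i) then follows at once from Theorem \ref{thmCG}(i), since $m(m(n-2)+2) = (q+1)\bigl((q+1)(q-1)+2\bigr) = (q+1)(q^2+1)$.

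For part (ii) I would first pin down the cusp reductions using the computation of $\G_0(xy) \bs \cT$ from $\S$\ref{ssInfty}: the cusps $[\infty]$, $[0]$, $[x]$, $[y]$ reduce to the smooth points of $E = E_1$, $E' = E_q$, $G = G_1$, $G' = G_q$ respectively. Raynaud's recipe for the canonical specialization map, as recalled in $\S$\ref{ssRaynaud}, then gives
$$
\phi_\infty(c_x) = G_1 - E_1 = g_1 - e_1, \qquad \phi_\infty(c_y) = G_q - E_1 = g_q - e_1,
$$
in the basis of $B^0(\tX)$ used by Theorem \ref{thmCG}(ii). Plugging $m = n = q+1$ into the explicit formulas there turns these into $(q^2+1)e_q$ and $(q^3+1)e_q$; since $(q^3+1) + q(q+1) = (q+1)(q^2+1)$, the second is also $-q(q+1)e_q$ modulo the order of $\Phi_\infty$.

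Parts (iii) and (iv) then reduce to an exercise in the arithmetic of cyclic groups. The images $\phi_\infty(c_x)$ and $\phi_\infty(c_y)$ have exact orders $q+1$ and $q^2+1$, so they generate the unique subgroups of those orders in the cyclic $\Phi_\infty \cong \Z/(q+1)(q^2+1)\Z$; their intersection has size $\gcd(q+1, q^2+1)$, which by Lemma \ref{lem_gcd} is $1$ if $q$ is even and $2$ if $q$ is odd. Since $|\cC| = |\Phi_\infty| = (q+1)(q^2+1)$ by Theorem \ref{thmCDG}, comparing orders immediately yields the isomorphism in (iii) and an exact sequence with kernel and cokernel both of order $2$ in (iv); the order-$2$ kernel can be exhibited explicitly using the congruence $q \cdot q \equiv -1 \pmod{q^2+1}$. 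The only real obstacle is the bookkeeping in (ii) --- matching the endpoints of the chains in the Bruhat--Tits quotient to the indexing $E_1, \ldots, E_q$, $G_1, \ldots, G_q$ demanded by Theorem \ref{thmCG}, and keeping the signs straight in $B^0(\tX)$ --- after which parts (iii) and (iv) are formal.
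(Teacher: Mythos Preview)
Your proposal is correct and follows essentially the same approach as the paper: both apply Theorem \ref{thmCG} with $m=n=q+1$, read off $\phi_\infty(c_x)=g_1-e_1$ and $\phi_\infty(c_y)=g_q-e_1$ from the cusp reductions in $\S$\ref{ssInfty}, and then deduce (iii) and (iv) from Lemma \ref{lem_gcd} and Theorem \ref{thmCDG} by comparing orders. The only cosmetic difference is that the paper computes the image of $\phi_\infty$ as $\langle \gcd(q^2+1,q(q+1))\,e_q\rangle$, whereas you phrase the same computation via the intersection of the two cyclic subgroups generated by the images.
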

\begin{proof}
Part (i) is an immediate consequence of the preceding discussion and
Theorem \ref{thmCG}. We have determined the reductions of the
cusps at $\infty$, so using Theorem \ref{thmCG}, we get 
$$
\phi_\infty(c_x)=g_1-e_1=(q^2+q+1)e_q-qe_q=(q^2+1)e_q
$$
and
$$
\phi_\infty(c_y)=g_q-e_1=-q^2e_q-qe_q=-q(q+1)e_q,
$$
which proves (ii).  Since $\mathrm{gcd}(q^2+1, q(q+1))=1$ (resp. $2$) if 
$q$ is even (resp. odd), cf. Lemma \ref{lem_gcd}, the subgroup of $\Phi_\infty$ generated by
$\phi_\infty(c_x)$ and $\phi_\infty(c_y)$ is $\langle e_q\rangle$ (resp. 
$\langle 2e_q\rangle$) if $q$ is even (resp. odd). On the other hand, 
we know that $e_q$ generates $\Phi_\infty$. 
Therefore, if $q$ is even, then 
$\phi_\infty$ is surjective, and if $q$ is odd, then the 
cokernel of $\phi_\infty$ is isomorphic to $\Z/2\Z$. 
The claims (iii) and (iv) now follow from Theorem \ref{thmCDG}.
\end{proof}

\begin{rem}
We note that (iii) and a slightly weaker version of (iv) in Theorem
\ref{thmCGinfty} can be deduced from Theorem \ref{thmCDG} and a
result of Gekeler \cite{GekelerDoc}. In fact, in \cite[p.
366]{GekelerDoc} it is proven that for an arbitrary $\fn$ the kernel
of the canonical homomorphism from the cuspidal divisor group of
$X_0(\fn)_F$ to $\Phi_\infty$ is a quotient of $(\Z/(q-1)\Z)^{c-1}$,
where $c$ is the number of cusps of $X_0(\fn)_F$. In our case, this
result says that $\ker(\phi_\infty)$ is a quotient of
$(\Z/(q-1)\Z)^3$. Now suppose $q$ is even. Then $\cC\cong
\Z/(q^2+1)(q+1)\Z$. Since for even $q$, $\mathrm{gcd}(q-1,
(q^2+1)(q+1))=1$, $\phi_\infty$ must be injective. But by (i),
$\#\Phi_\infty=(q^2+1)(q+1)=\# \cC$, so $\phi_\infty$ is also
surjective. When $q=2$, the fact that $\#\Phi_\infty=15$ and
$\phi_\infty$ is an isomorphism is already contained in
\cite[(5.3.1)]{GekelerDoc}.

Now suppose $q$ is odd. Then $\cC\cong \Z/(q^2+1)\Z\oplus
\Z/(q+1)\Z$. Since $$\mathrm{gcd}(q-1, q+1)=\mathrm{gcd}(q-1,
q^2+1)=2,$$ $\ker(\phi_\infty)\subset (\Z/2\Z)^2$. Since
$\Phi_\infty$ is cyclic but $\cC$ is not, $\ker(\phi_\infty)$ is not
trivial, hence it is either $\Z/2\Z$ or $(\Z/2\Z)^2$. (Theorem
\ref{thmCGinfty} implies that the second possibility does not
occur.)
\end{rem}

\begin{notn}
Let $\cC_0$ be the subgroup of $\cC$ generated by $c_y$. 
\end{notn}

\begin{cor}\label{lemC0} The cyclic group $\cC_0$ has order $q^2+1$. Under the 
canonical specializations $\cC_0$ maps injectively into $\Phi_x$ and $\Phi_\infty$, and 
$\cC_0$ is the kernel of $\phi_y$. 
\end{cor}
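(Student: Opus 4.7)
The plan is that Corollary \ref{lemC0} is almost immediate from the preceding results, so my proposal is just to collect the relevant pieces. First, the order statement: by Theorem \ref{thmCDG}, $\cC \cong \Z/(q+1)\Z \oplus \Z/(q^2+1)\Z$ with $c_y$ having order $q^2+1$, so $\cC_0 = \langle c_y\rangle$ is cyclic of order $q^2+1$.

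Next I would verify injectivity into $\Phi_x$. By Theorem \ref{thmCGxy}(iii) we have $\phi_x(c_y) = z$, where $z = Z - Z'$ is the image in $\Phi_x$ of the difference of components of the special fiber at $x$. Applying Theorem \ref{thmCG}(ii) with $m = n = q+1$ (the parameters given by Proposition \ref{propModelxy}(i)) yields $z = (q+1) e_q$, which has exact order $q^2+1$ in $\Phi_x \cong \Z/(q+1)(q^2+1)\Z$. Since $c_y$ also has order $q^2+1$, the restriction $\phi_x|_{\cC_0}$ is injective.

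For injectivity into $\Phi_\infty$, I would use Theorem \ref{thmCGinfty}(ii), which gives $\phi_\infty(c_y) = -q(q+1)e_q$ in $\Phi_\infty \cong \Z/(q+1)(q^2+1)\Z$. The order of this element is $(q+1)(q^2+1)/\gcd(q(q+1),(q+1)(q^2+1)) = (q^2+1)/\gcd(q,q^2+1) = q^2+1$, since $\gcd(q,q^2+1)=1$. So again $\phi_\infty|_{\cC_0}$ is injective.

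Finally, for the claim that $\cC_0 = \ker\phi_y$: Theorem \ref{thmCGxy}(iv) gives $\phi_y(c_y) = 0$, so $\cC_0 \subseteq \ker\phi_y$. The same theorem gives $\phi_y(c_x) = z$, which generates $\Phi_y \cong \Z/(q+1)\Z$, so $\phi_y$ is surjective. Comparing orders, $|\ker \phi_y| = |\cC|/|\Phi_y| = (q+1)(q^2+1)/(q+1) = q^2+1 = |\cC_0|$, forcing equality. The main ``obstacle'' is really bookkeeping---identifying which generator each theorem provides---rather than any conceptual difficulty.
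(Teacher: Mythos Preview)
Your proposal is correct and follows the same approach as the paper, which simply states that the claims easily follow from Theorems \ref{thmCGxy}, \ref{thmCDG} and \ref{thmCGinfty}. You have merely unpacked the relevant orders and images explicitly, which is fine; the paper gives the same citations without spelling out the bookkeeping.
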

\begin{proof}
The claims easily follow from Theorems
\ref{thmCGxy}, \ref{thmCDG} and \ref{thmCGinfty}.
\end{proof}


\section{Component groups of $J^{xy}$}\label{SecCG-J'}


\subsection{A class number formula}\label{ssCN}
Let $H$ be a quaternion algebra over $F$. Let $\Ram\subset |F|$ be
the set of places where $H$ ramifies. Assume $\infty\in \Ram$.
Denote $\cR=\Ram-\infty$. Note that $\cR\neq \emptyset$ since
$\#\Ram$ is even.

Let $\Theta$ be a hereditary $A$-order in $H$. Let $I_1, \dots, I_h$
be the isomorphism classes of left $\Theta$-ideals. It is known that
$h(\Theta):=h$, called the \textit{class number} of $\Theta$, is
finite. For $i=1,\dots, h$ we denote by $\Theta_i$ the right order
of the respective $I_i$. (For the definitions see \cite{Vigneras}.)
Denote
$$
M(\Theta)=\sum_{i=1}^h (\Theta_i^\times:\F_q^\times)^{-1}.
$$
It is not hard to show that each $\Theta_i^\times$ is isomorphic to
either $\F_q^\times$ or $\F_{q^2}^\times$; see \cite[p. 383]{DvG}.
Let $U(\Theta)$ be the number of right orders $\Theta_i$ such that
$\Theta_i^\times\cong \F_{q^2}^\times$. In particular,
$$
h(\Theta)=M(\Theta)+U(\Theta)\left(1-\frac{1}{q+1}\right).
$$
\begin{defn}
For a subset $S$ of $|F|$, let
$$
\Odd(S)=\left\{
         \begin{array}{ll}
           1, & \hbox{if all places in $S$ have odd degrees;} \\
           0, & \hbox{otherwise.}
         \end{array}
       \right.
$$
\end{defn}
Let $\cS\subset |F|-\infty$ be a finite (possibly empty) set of
places such that $\cR\cap \cS=\emptyset$. Let $\fn\lhd A$ be the
square-free ideal whose support is $\cS$. Let $\Theta$ be an Eichler
$A$-order of level $\fn$. (When $\cS=\emptyset$, $\Theta$ is a
maximal $A$-order in $H$.) The formulae that follow are special
cases of (1), (4) and (6) in \cite{DvG}:
$$
M^\cS(H):=M(\Theta)=\frac{1}{q^2-1}\prod_{v\in
\cR}(q_v-1)\prod_{w\in \cS}(q_w+1),
$$
$$
U^\cS(H):=U(\Theta)=2^{\# \cR+\#\cS-1}\Odd(\cR) \prod_{w\in
\cS}(1-\Odd(w)).
$$
Denote
$$h^\cS(H)=M^\cS(H)+U^\cS(H)\frac{q}{q+1}.$$


\subsection{Component groups at $x$ and $y$}
Let $D$ and $R$ be as in $\S$\ref{ssDES}. Recall that we assume
$\infty\not\in R$. Fix a place $w\in R$. Let $D^w$ be the quaternion
algebra over $F$ which is ramified at $(R-w)\cup \infty$. Fix a
maximal $A$-order $\fD$ in $D^w$, and denote
\begin{align*}
&A^w=A[w^{-1}];\\
&\fD^w=\fD\otimes_A A^w;\\
&\G^w=\{\gamma\in (\fD^w)^\times\ |\ \ord_w(\Nr(\gamma))\in 2\Z\};
\end{align*}
here $w^{-1}$ denotes the inverse of a generator of the ideal in $A$
corresponding to $w$, and $\Nr$ denotes the reduced norm on $D^w$.

By fixing an isomorphism $D^w\otimes_F F_w\cong \M_2(F_w)$, one can
consider $\G^w$ as a subgroup of $\GL_2(F_w)$ whose image in $\PGL_2(F_w)$ 
is discrete and cocompact. Hence $\G^w$ acts on the Bruhat-Tits tree $\cT^w$ of $\PGL_2(F_w)$.
It is not hard to show that $\G^w$ acts without inversions, so the
quotient graph $\G^w\bs \cT^w$ is a finite graph without loops. We make $\G^w\bs
\cT^w$ into a graph with lengths by assigning to each edge $e$ of
$\G^w\bs \cT^w$ the length
$\#(\Stab_{\G^w}(\tilde{e})/\F_q^\times)$, where $\tilde{e}$ is a
preimage of $e$ in $\cT^w$. The graph with lengths $\G^w\bs \cT^w$
does not depend on the choice of isomorphism $D^w\otimes_F F_w\cong
\M_2(F_w)$, since such isomorphisms differ by conjugation.

As follows from the analogue of Cherednik-Drinfeld uniformization
for $X^R_{F_w}$, proven in this context by Hausberger
\cite{Hausberger}, $X^R_{F_w}$ is a twisted Mumford curve: Denote by
$\cO_w^{(2)}$ the quadratic unramified extension of $\cO_w$ and
denote by $\F_w^{(2)}$ the residue field of $\cO_w^{(2)}$. Then
$X^R_{F}$ has a semi-stable model $X^R_{\cO_w^{(2)}}$ over
$\cO_w^{(2)}$ such that the irreducible components of
$X^R_{\F_w^{(2)}}$ are projective lines without self-intersections,
and the dual graph $G(X^R_{\cO_w^{(2)}})$, as a graph with lengths,
is isomorphic to $\G^w\bs \cT^w$.

On the other hand, as is done in \cite{Kurihara} for the quaternion
algebras over $\Q$, the structure of $\G^w\bs \cT^w$ can be related
to the arithmetic to $D^w$: The number of vertices of $\G^w\bs
\cT^w$ is $2h^\emptyset(D^w)$, the number of edges is $h^w(D^w)$,
each edge has length $1$ or $q+1$, and the number of edges of length
$q+1$ is $U^w(D^w)$ (the notation here is as in $\S$\ref{ssCN}).
Hence, using the formulae in $\S$\ref{ssCN}, we get the following:

\begin{prop}\label{propC-Dmodel}
$X^R_{F}$ has a semi-stable model $X^R_{\cO_w^{(2)}}$ over
$\cO_w^{(2)}$ such that $X^R_{\F_w^{(2)}}$ is a union of projective
lines without self-intersections. The number of vertices of the dual
graph $G(X^R_{\cO_w^{(2)}})$ is
$$
\frac{2}{q^2-1}\prod_{v\in R-w}(q_v-1)+2^{\#
R-1}\Odd(R-w)\frac{q}{q+1};
$$
the number of edges is
$$
\frac{(q_w+1)}{q^2-1}\prod_{v\in R-w}(q_v-1)+2^{\#
 R-1}\Odd(R-w)(1-\Odd(w))\frac{q}{q+1}.
$$
The edges of $G(X^R_{\cO_w^{(2)}})$ have length $1$ or $q+1$. The
number of edges of length $q+1$ is
$$
2^{\# R-1}\Odd(R-w)(1-\Odd(w)).
$$

\end{prop}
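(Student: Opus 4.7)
The plan is to combine two independent inputs. The first is Hausberger's analogue of Cherednik-Drinfeld uniformization at the place $w \in R$, which is already invoked in the paragraph preceding the proposition: it yields a semi-stable model $X^R_{\cO_w^{(2)}}$ whose special fiber $X^R_{\F_w^{(2)}}$ is a union of projective lines without self-intersections, and whose dual graph with lengths is canonically isomorphic to $\G^w \bs \cT^w$. In particular, counting vertices, edges, and edges of each length in $G(X^R_{\cO_w^{(2)}})$ is the same as counting the corresponding orbits of $\G^w$ acting on $\cT^w$, together with their stabilizers modulo the central $\F_q^\times$.

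The second input is the translation of these orbit counts into arithmetic invariants of the auxiliary quaternion algebra $D^w$. Because $\G^w$ is cut out by the even-valuation condition on $\Nr$, its action partitions the vertices of $\cT^w$ into two classes according to the parity of the distance to a base vertex, and strong approximation for $(D^w)^\times$ away from $w$ and $\infty$ identifies the $\G^w$-orbits on each class with the set of left-ideal classes of a maximal $A^w$-order in $D^w$; this gives the total vertex count $2 h^\emptyset(D^w)$. Exactly the same argument with an Eichler $A^w$-order of level $w$ in place of a maximal order identifies the edge orbits with the ideal classes of that Eichler order, yielding $h^w(D^w)$ edges. For the lengths: the stabilizer in $\G^w/\F_q^\times$ of an edge is isomorphic to the image of the right unit group $\Theta_i^\times / \F_q^\times$ of the corresponding Eichler order, which by the remark after the definition of $U(\Theta)$ in $\S$\ref{ssCN} is trivial or of order $q+1$ according as $\Theta_i^\times \cong \F_q^\times$ or $\F_{q^2}^\times$. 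So the number of edges of length $q+1$ equals $U^w(D^w)$.

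Once this dictionary is in place, the rest is a direct substitution into the formulae of $\S$\ref{ssCN} with $H = D^w$, $\cR = R - w$, and $\cS = \emptyset$ for vertices, $\cS = \{w\}$ for edges. For vertices, $h^\emptyset(D^w) = M^\emptyset(D^w) + U^\emptyset(D^w) \cdot q/(q+1)$ with $M^\emptyset(D^w) = (q^2-1)^{-1} \prod_{v \in R-w}(q_v-1)$ and $U^\emptyset(D^w) = 2^{\# R - 1}\,\Odd(R-w)$; doubling produces the first displayed count. For edges, $h^w(D^w) = M^w(D^w) + U^w(D^w) \cdot q/(q+1)$ with $M^w(D^w) = (q_w+1) M^\emptyset(D^w)$ and $U^w(D^w) = 2^{\# R - 1}\,\Odd(R-w)(1-\Odd(w))$, which matches the second and third displayed expressions on the nose.

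The main obstacle is not the bookkeeping but the justification of the orbit-counting dictionary: one must verify that the function-field analogue of Kurihara's argument applies, i.e., that strong approximation for $(D^w)^\times$ away from $\{w, \infty\}$ indeed identifies $\G^w$-orbits on vertices (resp.\ edges) of $\cT^w$ with ideal classes of maximal (resp.\ Eichler level-$w$) $A^w$-orders in $D^w$, and that the stabilizer calculation giving edge lengths $1$ or $q+1$ is correct. Once this is granted, the proof reduces to the two-line substitution above; my write-up would therefore be brief, citing Hausberger for the uniformization, sketching the strong-approximation dictionary by analogy with \cite{Kurihara}, and then simply plugging in the formulae from $\S$\ref{ssCN}.
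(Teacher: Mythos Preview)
Your approach is exactly what the paper does: it invokes Hausberger's uniformization to identify the dual graph with $\G^w \bs \cT^w$, then cites the Kurihara-style dictionary to express the vertex, edge, and edge-length counts as $2h^\emptyset(D^w)$, $h^w(D^w)$, and $U^w(D^w)$, and finally substitutes the formulae of $\S$\ref{ssCN}. Two small slips to fix: the relevant ideal classes are those of $A$-orders (not $A^w$-orders) in $D^w$, and $U^\emptyset(D^w) = 2^{\#R-2}\Odd(R-w)$ rather than $2^{\#R-1}\Odd(R-w)$, so that doubling yields the correct coefficient $2^{\#R-1}$ in the vertex count.
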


This proposition has an interesting corollary:

\begin{cor}\label{corGenus}
Let $g(R)$ be the genus of $X^R_F$. Then
$$
g(R)=1+\frac{1}{q^2-1}\prod_{v\in R}(q_v-1)-\frac{q}{q+1}2^{\#
 R-1}\Odd(R).
$$
\end{cor}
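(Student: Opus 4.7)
The plan is to deduce the genus formula directly from Proposition \ref{propC-Dmodel} by computing the first Betti number of the dual graph of the semi-stable model $X^R_{\cO_w^{(2)}}$.

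First I would fix any $w\in R$ and apply Proposition \ref{propC-Dmodel}. Because every irreducible component of the special fiber $X^R_{\F_w^{(2)}}$ is a projective line (hence of geometric genus $0$) with no self-intersections, and because the flat proper model has generically a smooth geometrically connected curve, the dual graph $G:=G(X^R_{\cO_w^{(2)}})$ is a connected finite graph and the standard formula for the arithmetic genus of a semi-stable curve gives
$$
g(R) \;=\; \sum_{C\in C(X^R_{\cO_w^{(2)}})} g(C) \;+\; \bigl(\#E(G)-\#V(G)+1\bigr) \;=\; \#E(G)-\#V(G)+1.
$$
Note that the thickness of the nodes (the distinction between edges of length $1$ and $q+1$) is irrelevant here: blowing up a node of thickness $m$ into a chain of $m-1$ rational curves adds $m-1$ vertices and replaces one edge by $m$ edges, so it preserves both the arithmetic genus and the quantity $\#E-\#V+1$.

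Next I would substitute the counts given by Proposition \ref{propC-Dmodel}. Writing $P:=\prod_{v\in R-w}(q_v-1)$ and $N:=2^{\#R-1}\Odd(R-w)\frac{q}{q+1}$, one has
$$
\#E(G)-\#V(G) \;=\; \frac{(q_w+1)-2}{q^2-1}\,P \;+\; N\bigl((1-\Odd(w))-1\bigr) \;=\; \frac{q_w-1}{q^2-1}\,P \;-\; N\,\Odd(w).
$$
Since $(q_w-1)\cdot P=\prod_{v\in R}(q_v-1)$ and $\Odd(R-w)\cdot\Odd(w)=\Odd(R)$ (a product of places has all odd degrees iff each factor does), this simplifies to
$$
\#E(G)-\#V(G) \;=\; \frac{1}{q^2-1}\prod_{v\in R}(q_v-1) \;-\; \frac{q}{q+1}\,2^{\#R-1}\,\Odd(R),
$$
and adding $1$ yields the claimed formula for $g(R)$.

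The only non-routine point is the appeal to connectedness of $G$ (equivalently, of $X^R_{\F_w^{(2)}}$); this follows because $X^R_F$ is geometrically connected and $X^R_{\cO_w^{(2)}}$ is a proper flat model, so Zariski's connectedness theorem applies. Everything else is a bookkeeping exercise combining the two cardinalities in Proposition \ref{propC-Dmodel} with the elementary identity for $\Odd$.
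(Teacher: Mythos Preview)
Your proposal is correct and follows essentially the same approach as the paper: compute the first Betti number $\#E-\#V+1$ of the dual graph using the counts in Proposition \ref{propC-Dmodel}, identify it with the arithmetic genus of the special fiber (all components are $\P^1$'s without self-intersections), and invoke flatness to equate this with $g(R)$. You are slightly more explicit than the paper about connectedness and about why the edge lengths play no role, but the argument is the same.
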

\begin{proof}
Let $h_1$ be the dimension of the first simplicial homology group of
$G(X^R_{\cO_w^{(2)}})$ with $\Q$-coefficients. Let $V, E$ be the
number of vertices and edges of this graph, respectively. By Euler's
formula, $h_1=E-V+1$. Proposition \ref{propC-Dmodel} gives formulae
for $V$ and $E$ from which it is easy to see that $h_1$ is given by
the above expression. Since the irreducible components of
$X^R_{\F_w^{(2)}}$ are projective lines, it is not hard to show
that $h_1$ is the arithmetic genus of $X^R_{\F_w^{(2)}}$; cf.
\cite[p. 298]{Hartshorne}. On the other hand, $X^R_{\cO_w^{(2)}}$
is flat over $\cO_w^{(2)}$, so the genus $g(R)$ of its generic fiber
is equal to the arithmetic genus of the special fiber; see \cite[p.
263]{Hartshorne}. (Note that the special role of $w$ in the formulae
for $V$ and $E$ disappears in $g(R)$, as expected. This formula for
$g(R)$ was obtained in \cite{PapGenus} by a different argument.)
\end{proof}

\begin{thm}\label{thmCGxy'}Let $\Phi_v'$ denote the group of connected components of
$J^{xy}$ at $v\in |F|$.
\begin{itemize}
\item[(i)] $\Phi_x'\cong \Z/(q+1)\Z$;
\item[(ii)] $\Phi_y'\cong \Z/(q^2+1)(q+1)\Z$.
\end{itemize}
\end{thm}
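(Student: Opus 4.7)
The plan is to read off the dual graph of $X^{xy}$ at each of the two places from Proposition \ref{propC-Dmodel} and then apply Theorem \ref{thmCG} to the resulting two-vertex weighted graph. Since the formation of N\'eron models commutes with unramified base change, the component group over $\overline{\F}_w$ is the same whether computed from the N\'eron model over $\cO_w$ or from the Cherednik--Drinfeld model over $\cO_w^{(2)}$ supplied by Proposition \ref{propC-Dmodel}; so it suffices to analyze the latter.

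For part (i), take $w=x$, so $R-w=\{y\}$. Since $\deg(y)=2$ is even, $\Odd(R-w)=0$, which kills the second summand in all three formulae of Proposition \ref{propC-Dmodel}. What remains is $\frac{2}{q^2-1}(q^2-1)=2$ vertices, $\frac{q+1}{q^2-1}(q^2-1)=q+1$ edges, and no edges of length $q+1$. Hence $X^{xy}_{\F_x^{(2)}}$ is the union of two projective lines crossing transversally at $q+1$ ordinary double points, each of thickness $1$. Theorem \ref{thmCG}(i) with $m=1$ and $n=q+1$ then gives $\Phi_x'\cong \Z/(q+1)\Z$.

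For part (ii), take $w=y$, so $R-w=\{x\}$. Now $\Odd(R-w)=1$ (as $\deg(x)=1$) and $1-\Odd(w)=1$ (as $\deg(y)=2$). Plugging into Proposition \ref{propC-Dmodel} yields
$$
V=\frac{2(q-1)}{q^2-1}+\frac{2q}{q+1}=2,\qquad E=\frac{(q^2+1)(q-1)}{q^2-1}+\frac{2q}{q+1}=q+1,
$$
and exactly $2$ edges of length $q+1$, so the remaining $q-1$ edges have length $1$. This is precisely the configuration treated in $\S$\ref{ssRaynaud} with $n=q+1$ and $m=q+1$; note that Raynaud's intrinsic description of $\Phi$ depends only on the weighted dual graph up to isomorphism, so the positions of the two ``long'' edges among the $q+1$ intersection points is irrelevant. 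Theorem \ref{thmCG}(i) then gives
$$
\Phi_y'\cong \Z/(q+1)\bigl((q+1)(q-1)+2\bigr)\Z=\Z/(q+1)(q^2+1)\Z.
$$

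The main effort is just arithmetic bookkeeping with the formulae of Proposition \ref{propC-Dmodel}; the only substantive point is the invariance of $\Phi$ under relabeling of the intersection points, which is immediate from Raynaud's theorem. No further obstacle arises.
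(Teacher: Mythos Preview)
Your proof is correct and follows essentially the same route as the paper: read off from Proposition \ref{propC-Dmodel} that the dual graph at each place has exactly two vertices and $q+1$ edges (all of length $1$ at $x$; two of length $q+1$ and the rest of length $1$ at $y$), then invoke Theorem \ref{thmCG}. Your explicit remarks on unramified base change and on the irrelevance of which two edges carry the extra length are welcome clarifications of points the paper leaves implicit.
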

\begin{proof}
In general, the information supplied by Proposition
\ref{propC-Dmodel} is not sufficient for determining the graph
$G(X^R_{\cO_w^{(2)}})$ uniquely. Nevertheless, in the case when
$R=\{x, y\}$ Proposition \ref{propC-Dmodel} does uniquely determine
$G(X^R_{\cO_w^{(2)}})$: $G(X^{xy}_{\cO_x^{(2)}})$ is a graph
without loops, which has $2$ vertices, $q+1$ edges, and all edges
have length $1$. Similarly, $G(X^{xy}_{\cO_y^{(2)}})$ is a graph
without loops, which has $2$ vertices, $q+1$ edges, two of the edges
have length $q+1$ and all others have length $1$. Hence, in both
cases, the dual graph is the graph with two vertices and $q+1$ edges
connecting them, cf. Figure \ref{Fig3}.

\begin{figure}
\begin{picture}(40,25)
\qbezier(5,13)(20,35)(35,13)\qbezier(5,13)(20,25)(35,13)
\qbezier(5,13)(20,20)(35,13) \qbezier(5,13)(20,-10)(35,13)
\put(5,13){\circle*{2}}\put(35,13){\circle*{2}}
\put(20,12){\circle*{.7}}\put(20,9){\circle*{.7}}\put(20,6){\circle*{.7}}
\end{picture}
\caption{}\label{Fig3}
\end{figure}

Now Theorem \ref{thmCG} can be used to conclude that the component
groups are as stated.
\end{proof}


\subsection{Component group at $\infty$}\label{ssInftyD} Here we again rely on the
existence of analytic uniformization. Let $\La$ be a maximal
$A$-order in $D$. Let
$$
\G^\infty:=\La^\times.
$$
Since $D$ splits at $\infty$, by fixing an isomorphism $D\otimes
\Fi\cong \M_2(\Fi)$, we get an embedding $\G^\infty\hookrightarrow
\GL_2(\Fi)$. The group $\G^\infty$ is a discrete, cocompact subgroup of
$\GL_2(\Fi)$, well-defined up to conjugation. Let $\cT^\infty$ be
the Bruhat-Tits tree of $\PGL_2(\Fi)$. The group $\G^\infty$ acts on
$\cT^\infty$ without inversions, so the quotient
$\G^\infty\bs\cT^\infty$ is a finite graph without loops which we
make into a graph with lengths by assigning to an edge $e$ of 
$\G^\infty\bs\cT^\infty$ the length
$\#(\Stab_{\G^\infty}(\tilde{e})/\F_q^\times)$, where $\tilde{e}$ is
a preimage of $e$ in $\cT^\infty$. By a theorem of Blum and Stuhler
\cite[Thm. 4.4.11]{BS},
$$
(X^R_{\Fi})^\an\cong \G^\infty\bs \Omega.
$$
From this one deduces that $X^R_F$ has a semi-stable model
$X^R_{\cO_\infty}$ over $\cO_\infty$ such that the dual graph of
$X^R_{\cO_\infty}$, as a graph with lengths, is isomorphic to
$\G^\infty\bs\cT^\infty$, cf. \cite{Kurihara}. The structure of
$\G^\infty\bs\cT^\infty$ can be related to the arithmetic of $D$;
see \cite{PapLocProp}.

\begin{prop}\label{propC-Dmodelinf}
$X^R_{F}$ has a semi-stable model $X^R_{\cO_\infty}$ over
$\cO_\infty$ such that the special fiber $X^R_{\F_\infty}$ is a
union of projective lines without self-intersections. 
The number of vertices of the dual graph 
$G(X^R_{\cO_\infty})$ is
$$
\frac{2}{q-1}(g(R)-1)+\frac{q}{q-1}2^{\# R -1}\Odd(R);
$$
the number of edges is
$$
\frac{q+1}{q-1}(g(R)-1)+\frac{q}{q-1}2^{\# R -1}\Odd(R).
$$
All edges have length $1$.
\end{prop}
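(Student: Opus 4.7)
The plan is to mirror the proof of Proposition \ref{propC-Dmodel}, with the Cherednik--Drinfeld--Hausberger uniformization replaced by its $\infty$-adic counterpart. First, the Blum--Stuhler theorem cited in the text identifies $(X^R_{\Fi})^{\an}$ with the rigid analytic quotient $\G^\infty\bs\Omega$. Standard Mumford-curve theory then produces a semi-stable model $X^R_{\cO_\infty}$ whose special fiber is a union of copies of $\P^1_{\F_\infty}$ meeting transversally, without self-intersections, and whose dual graph (with lengths) is canonically isomorphic to $\G^\infty\bs\cT^\infty$. This disposes of the qualitative statements in the proposition and reduces the remaining claims to a combinatorial count on $\G^\infty\bs\cT^\infty$.

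Next I would extract that combinatorial data. Following \cite{PapLocProp} (in the spirit of Kurihara's original treatment over $\Q$), the number of vertices and the number of edges of $\G^\infty\bs\cT^\infty$ can be read off from class-number-type invariants attached to $\La$. Applying the formulas of $\S\ref{ssCN}$ to $H=D$ with $\cR=R$ and $\cS=\emptyset$ yields
\[
M^\emptyset(D)=\frac{1}{q^2-1}\prod_{v\in R}(q_v-1),\qquad U^\emptyset(D)=2^{\#R-1}\Odd(R).
\]
Using Corollary \ref{corGenus} to re-express $\prod_{v\in R}(q_v-1)$ in terms of $g(R)-1$ and $2^{\#R-1}\Odd(R)$, the raw arithmetic counts collapse into the vertex and edge formulas stated in the proposition.

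The most delicate point, and the expected main obstacle, is verifying that every edge has length $1$, i.e., that $\Stab_{\G^\infty}(\tilde e)/\F_q^\times$ is trivial for every edge $\tilde e$ of $\cT^\infty$ lying over an edge of the quotient. Equivalently, one must rule out embeddings of $\F_{q^2}$ into the unit groups of those Eichler orders of $\La$ that fix an edge at the split place $\infty$. Unlike the situation in Proposition \ref{propC-Dmodel}, where length-$(q+1)$ edges contribute the $U^w(D^w)$ term, here $D\otimes_F\Fi\cong\M_2(\Fi)$ is split, and the embedding/optimal-embedding count performed in \cite{PapLocProp} shows that the $U$-type contribution is absorbed into the vertex count rather than producing long edges.

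As a final consistency check, once all edges have length $1$ the first Betti number of $\G^\infty\bs\cT^\infty$ equals $E-V+1$, and a direct subtraction shows this equals $g(R)$; this agrees both with Corollary \ref{corGenus} and with the equality between $g(R)$ and the arithmetic genus of $X^R_{\F_\infty}$ forced by flatness of $X^R_{\cO_\infty}$ over $\cO_\infty$, giving independent confirmation of the counts.
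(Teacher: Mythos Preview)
The paper's own proof is simply a citation to Proposition~5.2 and Theorem~5.5 of \cite{PapLocProp}, so your outline is already more detailed than what the paper provides. Your overall architecture---Blum--Stuhler uniformization, then Mumford-curve theory to obtain the semi-stable model with dual graph $\G^\infty\bs\cT^\infty$, then a combinatorial count---is exactly right and matches the discussion preceding the proposition.

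There is, however, a genuine gap in your combinatorial step. The formulas of $\S$\ref{ssCN} are stated under the explicit hypothesis $\infty\in\Ram(H)$; they compute class numbers of hereditary $A$-orders in a quaternion algebra that is \emph{definite} at $\infty$. But $D$ is split at $\infty$, so $D$ satisfies the Eichler condition relative to $A$, and by strong approximation the ideal class number of a maximal $A$-order $\La$ in $D$ is trivial (equal to the class number of $A$, which is $1$). Thus ``$h^\emptyset(D)$'' in the sense of $\S$\ref{ssCN} is not the invariant counting vertices of $\G^\infty\bs\cT^\infty$. Indeed, if you mimic Proposition~\ref{propC-Dmodel} and set $V=2h^\emptyset(D)$ with your values of $M^\emptyset(D)$ and $U^\emptyset(D)$, you get $V=2(g(R)-1)+\frac{4q}{q+1}2^{\#R-1}\Odd(R)$, which already disagrees with the stated formula $V=\frac{2}{q-1}(g(R)-1)+\frac{q}{q-1}2^{\#R-1}\Odd(R)$ in the test case $R=\{x,y\}$, $\deg x=1$, $\deg y=2$ (you would obtain $2(q-1)$ vertices rather than $2$).

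The vertex and edge counts in \cite{PapLocProp} come instead from a mass formula for the finite quotient graph $\G^\infty\bs\cT^\infty$ together with a classification of the finite-order elements (equivalently, of the $\F_{q^2}$-embeddings) in $\G^\infty$; this is where the factor $q-1$ in the denominators, rather than $q^2-1$, arises. Your instinct that the ``$U$-type'' contribution behaves differently here is correct, but the mechanism is not that of $\S$\ref{ssCN}; you would need to invoke the actual results of \cite{PapLocProp} (or redo that analysis) to close the gap.
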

\begin{proof}
See Proposition 5.2 and Theorem 5.5 in \cite{PapLocProp}.
\end{proof}

\begin{thm}\label{thmCGinfty'}
$\Phi_\infty'\cong \Z/(q+1)\Z$.
\end{thm}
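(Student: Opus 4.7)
The plan is to combine Proposition \ref{propC-Dmodelinf} with Raynaud's description of component groups (Theorem \ref{thmCG}), in complete analogy with the proof of Theorem \ref{thmCGxy'}, but now at the place $\infty$ instead of at $x$ or $y$.

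First I would specialize Proposition \ref{propC-Dmodelinf} to the case $R=\{x,y\}$. Since $\deg(y)=2$ is even, $\Odd(R)=0$, so the secondary terms in the vertex and edge counts vanish. Using the earlier fact that $g(xy)=q$, the formulae collapse to
\begin{align*}
\#\,\text{vertices of }G(X^{xy}_{\cO_\infty}) &= \tfrac{2}{q-1}(q-1)=2,\\
\#\,\text{edges of }G(X^{xy}_{\cO_\infty}) &= \tfrac{q+1}{q-1}(q-1)=q+1,
\end{align*}
with every edge of length $1$. So the dual graph has exactly two vertices joined by $q+1$ edges, and the corresponding semi-stable model has $X^{xy}_{\F_\infty}$ consisting of two projective lines meeting transversally in $q+1$ ordinary double points, each of thickness $1$.

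Next I would feed this into Theorem \ref{thmCG} with $n=q+1$ and $m=1$. In that case every singular point has thickness $1$, so $X^{xy}_{\cO_\infty}$ is already regular (no resolution needed), and Raynaud's theorem directly yields
\[
\Phi_\infty' \;\cong\; \Z/m\bigl(m(n-2)+2\bigr)\Z \;=\; \Z/\bigl((q+1-2)+2\bigr)\Z \;=\; \Z/(q+1)\Z.
\]
This is exactly the assertion of the theorem.

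There is no real obstacle here: the only mild subtlety is correctly evaluating $\Odd(R)$ (which is $0$ precisely because $y$ has even degree, unlike the case analysed in Theorem \ref{thmCGxy'}(i) where $w=x$ has odd degree and an $\Odd$-term can appear in the other component-group computations). Once that is observed, the genus input $g(xy)=q$ makes the formulae of Proposition \ref{propC-Dmodelinf} degenerate to a totally explicit graph with two vertices and $q+1$ length-$1$ edges, and Theorem \ref{thmCG} finishes the job immediately.
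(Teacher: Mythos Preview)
Your proof is correct and follows exactly the same approach as the paper: specialize Proposition~\ref{propC-Dmodelinf} to $R=\{x,y\}$ (using $\Odd(R)=0$ and $g(R)=q$) to obtain the two-vertex, $(q+1)$-edge, all-length-$1$ dual graph, then apply Theorem~\ref{thmCG} with $n=q+1$, $m=1$. One trivial quibble: your parenthetical remark about Theorem~\ref{thmCGxy'}(i) is slightly off, since at $w=x$ one has $\Odd(R-w)=\Odd(\{y\})=0$ and the $\Odd$-term vanishes there too; it is at $w=y$ (part (ii)) that the $\Odd$-term contributes---but this has no bearing on the argument itself.
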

\begin{proof}
Applying Proposition \ref{propC-Dmodelinf} in the case $R=\{x, y\}$,
one easily concludes that $X^{xy}_F$ has a semi-stable model over
$\cO_\infty$ whose dual graph looks like Figure \ref{Fig3}: it has
$2$ vertices, $q+1$ edges, and all edges have length $1$. The
structure of $\Phi_\infty'$ now follows from Theorem \ref{thmCG}.
\end{proof}


\section{Jacquet-Langlands isogeny}\label{SecJLisog}

Let $D$ and $R$ be as in $\S$\ref{ssDES}. Let $X:=X^R_F$,
$X':=X_0(R)_F$, $J:=J^R$, $J':=J_0(R)$. Fix a separable closure
$F^\sep$ of $F$ and let $G_F:=\Gal(F^\sep/F)$. Let $p$ be the
characteristic of $F$ and fix a prime $\ell\neq p$. Denote by
$V_\ell(J)$ the Tate vector space of $J$; this is a $\Q_\ell$-vector
space of dimension $2g(R)$ naturally equipped with a continuous
action of $G_F$. Let $V_\ell(J)^\ast$ be the linear dual of
$V_\ell(J)$.

\begin{thm}\label{thmExistence}
There is a surjective homomorphism $J'\to J$ defined over $F$.
\end{thm}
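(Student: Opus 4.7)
The plan is to adapt Ribet's strategy from \cite{RibetIsogeny} to the function field setting. First, I would identify $V_\ell(J)^\ast$ and $V_\ell(J')^\ast$ with pieces of the $\ell$-adic \'etale cohomology of $X$ and $X'$, respectively, both of which carry compatible actions of the spherical Hecke algebra and of $G_F$. Using the theory of newforms for Drinfeld modular curves, one obtains a decomposition $V_\ell(J')^\ast \cong V_\ell(J'^{\new})^\ast \oplus V_\ell(J'^{\old})^\ast$ as Hecke-Galois modules, with the new part corresponding to cusp forms on $\GL_2$ of exact level $R$; the old part is built from degeneracy maps out of $J_0(R/\fp)$ for primes $\fp \mid R$.

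Next, I would invoke the Jacquet-Langlands correspondence over $F$ to biject cuspidal automorphic representations of $D^\times(\A_F)$ with those cuspidal automorphic representations of $\GL_2(\A_F)$ which are special (Steinberg up to an unramified twist) at every place in $R$ and unramified outside $R \cup \{\infty\}$. On the $\cD$-elliptic sheaf side, $V_\ell(J)^\ast$ realizes the former by the main theorems of \cite{LRS} and \cite{Hausberger}; on the Drinfeld modular side, $V_\ell(J'^{\new})^\ast$ realizes the latter. Matching the Hecke eigenvalues at the unramified places and applying the Chebotarev density theorem then yields an isomorphism $V_\ell(J)^\ast \cong V_\ell(J'^{\new})^\ast$ of $G_F$-modules.

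Finally, I would apply Tate's conjecture for homomorphisms of abelian varieties over global function fields (Zarhin's theorem), which asserts that
$$\Hom_F(A, B) \otimes_\Z \Q_\ell \xrightarrow{\sim} \Hom_{G_F}(V_\ell(A), V_\ell(B))$$
is an isomorphism, to lift the above Galois-equivariant isomorphism to an actual $F$-isogeny $\varphi \colon J'^{\new} \to J$. Composing $\varphi$ with the canonical surjection $J' \twoheadrightarrow J'^{\new}$ (quotient by the old subvariety) produces the desired surjective homomorphism $J' \to J$ defined over $F$.

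The hard part will be ensuring that the global Jacquet-Langlands correspondence is compatible with the $\ell$-adic \'etale realizations on both sides; without this compatibility one would only obtain an abstract identification of Hecke eigensystems, not an isomorphism of $G_F$-modules. This is precisely what the constructions of \cite{LRS} and \cite{Hausberger} are designed to provide, after which Zarhin's theorem does the remaining abelian-variety-theoretic work. As stressed in the introduction, this route is purely existential and gives no control over $\ker(J' \to J)$, which is the entire motivation for the explicit component-group and cuspidal-divisor-group analysis that follows.
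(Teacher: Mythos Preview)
Your proposal is correct and follows essentially the same route as the paper: realize both Tate spaces via the \'etale cohomology decompositions of \cite{LRS}, \cite{Hausberger} (for $X$) and \cite{Drinfeld} (for $X'$), match them via the global Jacquet--Langlands correspondence together with Chebotarev, and then invoke the Tate conjecture over function fields to produce the map of abelian varieties.

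Two small differences are worth flagging. First, the paper does \emph{not} pass through the new/old decomposition in the proof itself; it shows directly that $V_\ell(J)$ is a $G_F$-quotient of the full $V_\ell(J')$ by checking that whenever $(\Pi^\infty)^{\cK}\neq 0$ for $\Pi\in\cA_D$ one has $((\mathrm{JL}(\Pi))^\infty)^{\cI}\neq 0$, and that $(\Pi^\infty)^{\cK}$ is at most one-dimensional. The identification with $J'^{\new}$ is deferred to a separate remark (Remark~\ref{rem-new}). Your route through $J'^{\new}$ is equivalent but requires you to justify up front that JL sets up a \emph{bijection} onto the cuspidal $\GL_2$-representations that are special at every place of $R$ (so that the dimensions match), whereas the paper only needs the one-way implication to get a surjection. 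Second, you attribute the Tate conjecture solely to Zarhin; the paper is careful to cite Zarhin for $p>2$ and Mori for $p=2$, which matters here since $q$ may be even.
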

\begin{proof}
Let $\A=\prod_{v\in |F|}' F_v$ denote the adele ring of $F$ and let
$\A^\infty=\prod_{v\in |F|-\infty}' F_v$, so $\A=\A^\infty\times
F_\infty$. Fix a uniformizer $\pi_\infty$ at $\infty$. Let
$\cA(D^\times(F)\bs D^\times(\A)/\pi_\infty^\Z)$ be the space of
$\overline{\Q}_\ell$-valued locally constant functions on
$D^\times(\A)/\pi_\infty^\Z$ which are invariant under the action of
$D^\times(F)$ on the left. This space is equipped with the right
regular representation of $D^\times(\A)/\pi_\infty^\Z$. Since $D$ is
a division algebra, the coset space $D^\times(F)\bs
D^\times(\A)/\pi_\infty^\Z$ is compact and decomposes as a sum of
irreducible admissible representations $\Pi$ with finite multiplicities $m(\Pi)>0$, cf. \cite[$\S$13]{LRS}:
\begin{equation}\label{eqAR}
\cA_D:=\cA(D^\times(F)\bs D^\times(\A)/\pi_\infty^\Z)=\bigoplus_\Pi
m(\Pi)\cdot \Pi.
\end{equation}
Moreover, as follows from the Jacquet-Langlands correspondence and the multiplicity-one theorem 
for automorphic cuspidal representations of $\GL_2(\A)$, the multiplicities $m(\Pi)$ are all equal to $1$; see \cite[Thm. 10.10]{Gelbart}. 
The representations appearing in the sum (\ref{eqAR}) are
called \textit{automorphic}. Each automorphic representation $\Pi$
decomposes as a restricted tensor product $\Pi=\otimes_{v\in |F|}
\Pi_v$ of admissible irreducible representations of $D^\times(F_v)$.
We denote $\Pi^\infty=\otimes_{v\neq \infty}\Pi_v$, so
$\Pi=\Pi^\infty\otimes \Pi_\infty$. If $\Pi$ is finite dimensional,
then it is of the form $\Pi=\chi\circ \Nr$, where $\chi$ is a Hecke
character of $\A^\times$ and $\Nr$ is the reduced norm on
$D^\times$, cf. \cite[Lem. 14.8]{LRS}. If $\Pi$ is infinite
dimensional, then $\Pi_v$ is infinite dimensional for every
$v\not\in R$.

Let $\psi_v$ be a character of $F_v^\times$. Denote by $\Sp_v\otimes
\psi_v$ the unique irreducible quotient of the induced
representation
$$
\mathrm{Ind}_{B}^{\GL_2}
(|\cdot|_v^{-\frac{1}{2}}\psi_v\oplus|\cdot|_v^{\frac{1}{2}}\psi_v),
$$
where $B$ is the subgroup of upper-triangular matrices in $\GL_2$.
The representation $\Sp_v\otimes \psi_v$ is called the
\textit{special representation} of $\GL_2(F_v)$ twisted by $\psi_v$.
If $\psi_v=1$, then we simply write $\Sp_v$.

For $v\in R$, let $\cD_v$ be the maximal order in $D(F_v)$. Let
$$
\cK:=\prod_{v\in R}\cD_v^\times\times \prod_{v\in
|F|-R-\infty}\GL_2(\cO_v)\subset D^\times(\A^\infty).
$$
Taking the $\cK$-invariants in Theorems 14.9 and 14.12 in
\cite{LRS}, we get an isomorphism of $G_F$-modules
\begin{equation}\label{eq-ESCD}
V_\ell(J)^\ast\otimes_{\Q_\ell}\overline{\Q}_\ell=
H_{\text{\'et}}^1(X\otimes_F F^\sep,
\overline{\Q}_\ell)=\bigoplus_{\substack{\Pi\in \cA_D\\
\Pi_\infty\cong \Sp_\infty}}\left(\Pi^\infty\right)^\cK\otimes
\sigma(\Pi),
\end{equation}
where $\sigma(\Pi)$ is a $2$-dimensional irreducible representation
of $G_F$ over $\overline{\Q}_\ell$ with the following property: If
$\left(\Pi^\infty\right)^\cK\neq 0$, then for all $v\in
|F|-R-\infty$, $\sigma(\Pi)$ is unramified at $v$ and there is an
equality of $L$-functions
$$
L(s-\frac{1}{2}, \Pi_v)=L(s, \sigma(\Pi)_v);
$$
here $\sigma(\Pi)_v$ denotes the restriction of $\sigma(\Pi)$ to a
decomposition group at $v$. This uniquely determines $\sigma(\Pi)$
by the Chebotarev density theorem \cite[Ch. I, pp.
8-11]{SerreAbelian}. Next, we claim that the dimension of
$\left(\Pi^\infty\right)^\cK$ is at most one. Indeed, if $v\in
|F|-R-\infty$, then $\Pi_v^{\GL_2(\cO_v)}$ is at most 
one-dimensional by \cite[Thm. 4.6.2]{Bump}. On the other hand, note that
$\cD_v^\times$ is normal in $D^\times(F_v)$ and
$D^\times(F_v)/\cD_v^\times\cong \Z$ for $v\in R$. Hence
$\Pi_v^{\cD_v^\times}\neq 0$ implies $\Pi_v=\psi_v\circ \Nr$ for
some unramified character of $F_v^\times$ ($\psi_v$ is unramified
because the reduced norm maps $\cD_v^\times$ surjectively onto
$\cO_v^\times$).

Let $\cI_v$ be the Iwahori subgroup of $\GL_2(\cO_v)$, i.e., the
subgroup of matrices which maps to $B(\F_v)$ under the reduction map
$\GL_2(\cO_v)\to \GL_2(\F_v)$. Let
$$
\cI=\prod_{v\in R} \cI_v\times \prod_{v\in |F|-R-\infty}\GL_2(\cO_v)
\subset\GL_2(\A^\infty).
$$
Let $\cA_0:=\cA_0(\GL_2(F)\bs \GL_2(\A))$ be the space of
$\overline{\Q}_\ell$-valued cusp forms on $\GL_2(\A)$; see
\cite[$\S$4]{GR} or \cite[$\S$3.3]{Bump} for the definition. Taking
the $\cI$-invariants in Theorem 2 of \cite{Drinfeld}, we get an
isomorphism of $G_F$-modules
\begin{equation}\label{eq-modDR}
V_\ell(J')^\ast\otimes_{\Q_\ell}\overline{\Q}_\ell=H_{\text{\'et}}^1(X'\otimes_F
F^\sep,
\overline{\Q}_\ell)=\bigoplus_{\substack{\Pi\in \cA_0\\
\Pi_\infty\cong \Sp_\infty}}\left(\Pi^\infty\right)^{\cI}\otimes
\rho(\Pi),
\end{equation}
where $\rho(\Pi)$ is $2$-dimensional irreducible representation of
$G_F$ over $\overline{\Q}_\ell$ with the following property: If
$\left(\Pi^\infty\right)^\cI\neq 0$, then for all $v\in
|F|-R-\infty$, $\rho(\Pi)$ is unramified at $v$ and
$$
L(s-\frac{1}{2}, \Pi_v)=L(s, \rho(\Pi)_v).
$$
In this case, $\left(\Pi^\infty\right)^{\cI}$ is finite dimensional,
but its dimension might be larger than one (due to the existence of
old forms).

The global Jacquet-Langlands correspondence \cite[Ch. III]{JaLa}
associates to each infinite dimensional automorphic representation
$\Pi$ of $D^\times(\A)$ a cuspidal representation
$\Pi'=\mathrm{JL}(\Pi)$ of $\GL_2(\A)$ with the following
properties:
\begin{enumerate}
\item if $v\not\in R$ then $\Pi_v\cong\Pi'_v$;
\item if $v\in R$ and $\Pi_v\cong \psi_v\circ\Nr$ for a character
$\psi$ of $F_v^\times$, then $$\Pi_v'\cong \Sp_v\otimes \psi_v.$$
\end{enumerate}

As we observed above, for $\Pi\in \cA_D$ such that
$(\Pi^\infty)^\cK\neq 0$, the characters $\psi_v$ at the places in
$R$ are unramified. Thus, for $v\in R$, $\Pi_v'$ is a twist of
$\Sp_v$ by an unramified character. On the other hand, the
representations of the form $\Sp_v\otimes \psi_v$, with $\psi_v$
unramified, can be characterized by the property that they have a
unique $1$-dimensional $\cI_v$-fixed subspace; see \cite{Casselman}.
Hence if $(\Pi^\infty)^\cK\neq 0$, then $((\Pi')^\infty)^\cI\neq 0$.

Now using (\ref{eq-ESCD}) and (\ref{eq-modDR}), one concludes that
$V_\ell(J)$ is isomorphic with a quotient of $V_\ell(J')$ as a
$G_F$-module. On the other hand, by a theorem of Zarhin (for $p>2$)
and Mori (for $p=2$)
\begin{equation}\label{eq-isog}
\Hom_F(J', J)\otimes \Q_\ell\cong \Hom_{G_F}(V_\ell(J'), V_\ell(J)).
\end{equation}
Thus, there is a surjective homomorphism $J'\to J$ defined over $F$.
\end{proof}

\begin{cor}
$J_0(xy)$ and $J^{xy}$ are isogenous over $F$.
\end{cor}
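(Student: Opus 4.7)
The plan is to deduce this corollary directly from Theorem \ref{thmExistence} applied to the special case $R=\{x,y\}$, combined with a dimension count.

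First, I would invoke Theorem \ref{thmExistence} to produce a surjective homomorphism $J_0(xy)\to J^{xy}$ defined over $F$. The work then reduces to upgrading ``surjective'' to ``isogeny'', which is purely a matter of checking that the source and target have the same dimension.

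Second, I would verify that both Jacobians are $q$-dimensional. For $J_0(xy)$, this is the genus formula recorded right after Theorem \ref{Drinf}: the genus of $X_0(xy)_F$ is $q$. For $J^{xy}$, I would apply Corollary \ref{corGenus} with $R=\{x,y\}$; since $\deg(y)=2$ is even we have $\Odd(R)=0$, so
\[
g(\{x,y\})=1+\frac{(q_x-1)(q_y-1)}{q^2-1}=1+\frac{(q-1)(q^2-1)}{q^2-1}=q.
\]
Thus $\dim J_0(xy)=\dim J^{xy}=q$.

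Finally, a surjective homomorphism of abelian varieties of the same dimension necessarily has finite kernel (its kernel is a proper closed subgroup scheme of dimension $\dim J_0(xy)-\dim J^{xy}=0$), hence is an isogeny. No step is a real obstacle here: all the substantive work has already been done in Theorem \ref{thmExistence} (via the Jacquet--Langlands correspondence and Zarhin--Mori) and in Corollary \ref{corGenus} (via the uniformization and Euler characteristic computation), so this corollary is essentially a bookkeeping consequence of those results.
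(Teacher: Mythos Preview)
Your proposal is correct and matches the paper's own proof, which is the one-line observation that $\dim(J^{xy})=q=\dim(J_0(xy))$ together with Theorem \ref{thmExistence}. You have simply spelled out the dimension count and the ``surjective of equal dimension implies isogeny'' step in more detail than the paper does.
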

\begin{proof} Since $\dim(J^{xy})=q=\dim(J_0(xy))$, the claim follows from Theorem
\ref{thmExistence}.
\end{proof}

\begin{conj}\label{myConj} There exists an isogeny $J_0(xy)\to J^{xy}$ whose
kernel is $\cC_0$.
\end{conj}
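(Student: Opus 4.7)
The plan is two-pronged. First, I would establish that $J_0(xy)/\cC_0$ and $J^{xy}$ share the same component group orders at all places of bad reduction, providing strong numerical evidence for the conjecture. Second, I would settle the case $q=2$ using the explicit decomposition of $J_0(xy)$ into elliptic factors.

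For the evidence step, apply Theorem \ref{propGroth} to the isogeny $\varphi: J_0(xy) \to J_0(xy)/\cC_0$ whose kernel is $H = \cC_0$. The semi-stable models described in Proposition \ref{propModelxy} and \S\ref{ssInfty} show that $J_0(xy)$ has purely toric reduction at $x$, $y$, and $\infty$, so Theorem \ref{propGroth} applies at each of these places. Corollary \ref{lemC0} says that $\cC_0 = \ker(\phi_y)$ but injects into $\Phi_x$ and $\Phi_\infty$. Plugging this into the exact sequence
\[
0 \to H_1 \to \Phi_v \to \Phi_v(J_0(xy)/\cC_0) \to H_0 \to 0
\]
at each of the three places yields $|\Phi_v(J_0(xy)/\cC_0)| = |\Phi_v(J^{xy})|$, matching the groups computed in Theorems \ref{thmCGxy'} and \ref{thmCGinfty'}.

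For $q=2$, we have $\dim J_0(xy) = \dim J^{xy} = 2$ and $\cC_0 \cong \Z/5\Z$. My plan is to produce an $F$-isogeny $J_0(xy) \sim E_1 \times E_2$, where $E_1, E_2$ are elliptic curves over $F$ of conductor $xy\infty$, splitting $J_0(xy)$ along the $\pm 1$ eigenspaces of a suitable Atkin--Lehner involution. One then enumerates the elliptic curves in the two $F$-isogeny classes of conductor $xy\infty$ (a finite, explicit task over $\F_2(T)$), identifies which curves are the optimal quotients of $X_0(xy)_F$ and of $X^{xy}_F$ via Hecke-eigenvalue matching supplied by Jacquet--Langlands, and reads off the resulting product isogeny. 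Combining its degree with the component-group data from the first step pins the kernel down to $\cC_0$ exactly.

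The main obstacle is the gap between ``component groups match'' and ``the kernel equals $\cC_0$''. Matching orders is only a necessary condition: by Theorem \ref{thmExistence} we a priori obtain only the existence of some surjection $J_0(xy) \to J^{xy}$. The $q=2$ case is tractable precisely because $\dim = 2$ reduces the problem to elliptic curve arithmetic, where isogenies are classified by their degrees and the curves themselves are pinned down by conductor together with small torsion. For general $q$ one expects $J_0(xy)$ to be essentially simple, and a fundamentally new ingredient --- a direct automorphic construction of the Jacquet--Langlands isogeny, or a function-field analogue of Ribet's level-lowering argument --- would be required to identify the kernel on the nose.
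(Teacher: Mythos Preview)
Your proposal is correct in outline and matches the paper's approach: the evidence via component-group orders is exactly what the paper does after stating the conjecture, and the $q=2$ argument via Atkin--Lehner decomposition into elliptic factors is the paper's strategy in Theorem~\ref{propq=2}.

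Two points where your sketch is looser than the paper's execution. First, Jacquet--Langlands only matches isogeny classes, not optimal curves within them; the paper identifies the $J_0(xy)$-optimal curves via the Gekeler--Reversat method (Proposition~\ref{propOpt}(i)) and the $J^{xy}$-optimal curve in class~II via a component-group divisibility argument (Proposition~\ref{propOpt}(ii)), while the $J^{xy}$-optimal curve in class~I is left undetermined and falls out only at the end. Second, $J_0(xy)$ is not literally $E_1\times E_2$ but a quotient of it by a diagonal subgroup $S$ of $2$-torsion, and likewise for $J^{xy}$; the paper tracks this carefully, using that the elliptic isogenies $\phi_1\times\phi_2$ have odd degree so preserve the diagonal $2$-torsion, which lets one factor $[2]$ out of the composite and reduce to three candidate kernels $H_3$, $H_1\oplus H_3$, $H_2\oplus H_3$. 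The final discrimination uses that $H_2\cong\mu_3$ maps to zero in $\Phi_\infty$ (Galois acts nontrivially) while $H_1,H_3$ inject, so only the quotient by $H_3$ has $\#\Phi_\infty'=3$.
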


As an initial evidence for the conjecture, note that $J_0(xy)/\cC_0$
has component groups at $x, y, \infty$ of the same order as those of
$J^{xy}$. This follows from Theorem \ref{propGroth}, Corollary
\ref{lemC0}, and Table \ref{table0} in the introduction. We will show below
that Conjecture \ref{myConj} is true for $q=2$.

\begin{rem}\label{rem-new} The statement of Theorem \ref{thmExistence} can be refined.
The abelian variety $J$ has toric reduction at every $v\in R$, so it is isogenous to an
abelian subvariety of $J'$ having the same reduction property. The
new subvariety of $J'$, $J'^\new$, defined as in the case of
classical modular Jacobians (cf. \cite{Ribet90}, \cite[p.
248]{GekelerJL}), is the abelian subvariety of $J'$ of maximal
dimension having toric reduction at every $v\in R$. Hence $J$ is
isogenous to a subvariety of $J'^\new$. By computing the dimension
of $J'^\new$, one concludes that $J$ and $J'^\new$ are isogenous
over $F$.
\end{rem}

\begin{rem}
There is just one other case, besides the case which is the focus of this paper, 
when $J$ and $J'$ are actually isogenous. As one easily shows by comparing 
the genera of modular curves $X^R$ and $X_0(R)$, the genera 
of these curves are equal if and only if $R=\{x, y\}$ and 
$\{\deg(x),\deg(y)\}=\{1,1\}, \{1,2\}, \{2,2\}$. Assume $\deg(x)=\deg(y)=2$. 
Then the genus of both $X^{xy}$ and $X_0(xy)$ is $q^2$, 
but neither of these curves is hyperelliptic. The curve $X_0(xy)$ again has $4$ cusps 
which can be represented as in $\S$\ref{SecCDG}. 
Calculations similar to those we have  
carried out in earlier sections lead to the following result: 
\begin{enumerate}
\item The cuspidal divisor group $\cC$ is generated by $c_0$ and $c_x$. 
The order of  $c_0$ is $q^2+1$. 
The order of $c_x$ is divisible by $q^2+1$ and divides $q^4-1$. 
The order of $c_y$ is divisible by $q^2+1$ and divides $q^4-1$. 
\item $\Phi_x\cong \Phi_x'\cong \Z/(q^2+1)\Z$. 
\item $\Phi_y\cong \Phi_y'\cong \Z/(q^2+1)\Z$.
\item The canonical map $\phi_x:\cC\to \Phi_x$ is surjective, and 
$$
\phi_x(c_0)=z, \quad \phi_x(c_x)=0, \quad \phi_x(c_y)=z.
$$ 
\item The canonical map $\phi_y:\cC\to \Phi_y$ is surjective, and 
$$
\phi_x(c_0)=z, \quad \phi_y(c_x)=z, \quad \phi_y(c_y)=0.
$$
\end{enumerate}
The fact that $X_0(xy)$ is not hyperelliptic complicates the calculation of $\cC$: 
just the relations between the cuspidal divisors arising from the Drinfeld discriminant function 
are not sufficient for pinning down the orders of $c_x$ and $c_y$, cf. (\ref{eq-rel1}). Next, 
the calculations required for determining $\Phi_\infty$, $\Phi_\infty'$, and $\phi_\infty$ 
appear to be much more complicated than those in 
$\S$\ref{ssInfty} and $\S$\ref{ssInftyD}.  Nevertheless, based on the facts that we are able to prove, 
and in analogy with the case $\deg(x)=1$, $\deg(y)=2$, 
we make the following prediction: The cuspidal divisor 
group $\cC\cong (\Z/(q^2+1)\Z)^2$ 
is the direct sum of the cyclic subgroups generated by $c_x$ and $c_y$ 
both of which have order $q^2+1$, 
and there is an isogeny $J_0(xy)\to J^{xy}$ whose kernel is $\cC$. 
\end{rem}

\begin{defn}
It is known that every elliptic curve $E$ over $F$ with conductor
$\fn_E=\fn\cdot \infty$, $\fn\lhd A$, and split multiplicative
reduction at $\infty$ is isogenous to a subvariety of $J_0(\fn)$;
see \cite{GR}. This follows from (\ref{eq-modDR}), (\ref{eq-isog}),
and the fact \cite[p. 577]{Deligne} that the representation $\rho_E:
G_F\to \Aut(V_\ell(E)^\ast)$ is automorphic (i.e.,
$\rho_E=\rho(\Pi)$ for some $\Pi\in \cA_0$). The multiplicity-one
theorem can be used to show that in the $F$-isogeny class of $E$
there exists a unique curve $E'$ which is isomorphic to a
one-dimensional abelian subvariety of $J_0(\fn)$, thus maps
``optimally'' into $J_0(\fn)$. We call $E'$ the
\textit{$J_0(\fn)$-optimal} curve. Theorem \ref{thmExistence} and
Remark \ref{rem-new} imply that $E$ with square-free conductor
$R\cdot \infty$ and split multiplicative reduction at $\infty$ is
also isogenous to a subvariety of $J^R$. Moreover, in the
$F$-isogeny class of $E$ there is a unique elliptic curve $E''$
which is isomorphic to a one-dimensional abelian subvariety of
$J^R$. We call $E''$ the \textit{$J^R$-optimal} curve.
\end{defn}

\begin{notn}
Let $E$ be an elliptic curve over $F$ given by a Weierstrass
equation
$$
E:\ Y^2+a_1XY+a_3Y=X^3+a_2X^2+a_4X+a_6.
$$
Let $E^{(p)}$ be the elliptic curve given by the equation
$$
E^{(p)}:\ Y^2+a_1^pXY+a_3^pY=X^3+a_2^pX^2+a_4^pX+a_6^p.
$$
There is a Frobenius morphism $\Frob_p: E\to E^{(p)}$ which maps a
point $(x_0, y_0)$ on $E$ to the point $(x_0^p, y_0^p)$ on
$E^{(p)}$. It is clear that the $j$-invariants of these elliptic
curves are related by the equation $j(E^{(p)})=j(E)^p$. If $E$ has
semi-stable reduction at $v\in |F|$, then $\Phi_{E, v}\cong \Z/n\Z$,
where $\Phi_{E,v}$ denotes the component group of $E$ at $v$ and
$n=-\ord_v(j(E))\geq 1$. In this case, $\Phi_{E^{(p)}, v}\cong
\Z/pn\Z$.
\end{notn}

\begin{defn}
An elliptic curve $E$ over $F$ with $j$-invariant $j(E)\not\in \F_q$
is said to be \textit{Frobenius minimal} if it is not isomorphic to
$\tE^{(p)}$ for some other elliptic curve $\tE$ over $F$. It is easy
to check that this is equivalent to $j(E)\not\in F^p$, cf.
\cite{SchweizerStrong}.
\end{defn}

For $q$ even, Schweizer has completely classified the elliptic
curves over $F$ having conductor of degree $4$ in terms of explicit
Weierstrass equations; see \cite{Schweizer2}. We are particularly
interested in those curves which have conductor $xy\infty$ and split
multiplicative reduction at $\infty$.

\begin{thm}\label{thmECurves2} Assume $q=2^s$.
Elliptic curves over $F$ with conductor $xy\infty$ exist only if
there exists an $\F_q$-automorphism of $F$ that transforms the
conductor into $(T+1)(T^2+T+1)\infty$. In particular, $s$ must be
odd.

If $s$ is odd, then there exists two isogeny classes of elliptic
curves over $F$ with conductor $(T+1)(T^2+T+1)\infty$ and split
multiplicative reduction at $\infty$. The Frobenius minimal curves
in each isogeny class are listed in Tables \ref{table1} and
\ref{table2}; the last three columns in the tables give the orders
of the component groups $\Phi_{E,v}$ of the corresponding curve $E$
at $v=x, y, \infty$.
\end{thm}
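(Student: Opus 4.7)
The plan is to deduce the theorem from Schweizer's explicit classification \cite{Schweizer2} of elliptic curves over $F=\F_q(T)$ with conductor of degree $4$ in characteristic $2$. Since $\deg(xy\infty)=1+2+1=4$ and $xy\infty$ is square-free, any elliptic curve $E$ with conductor $xy\infty$ is semi-stable everywhere, and therefore occurs in Schweizer's tables. The group $\Aut_{\F_q}(F)\cong \PGL_2(\F_q)$ acts on the set of places of $F$ via its action on $\P^1_{\F_q}$; the subgroup stabilizing $\infty$ consists of the affine transformations $T\mapsto aT+b$ with $a\in \F_q^\times$, $b\in \F_q$.

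For the first assertion, I would enumerate the entries of Schweizer's list whose conductor has the shape (degree-$1$ prime)$\cdot$(degree-$2$ prime)$\cdot\infty$, and verify that in each case the degree-$2$ factor is equivalent to $T^{2}+T+1$ under the stabilizer of $\infty$ (and that the degree-$1$ factor can be moved to $T+1$). Because $T^2+T+1$ has its roots in $\F_4$, it is irreducible over $\F_{2^s}$ precisely when $\F_4\not\subseteq \F_{2^s}$, i.e.\ when $s$ is odd. For even $s$ the required degree-$2$ place does not exist over $F$, which yields the ``only if'' in the first part of the theorem.

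For the second assertion I would assume $s$ is odd, extract from Schweizer's tables the Weierstrass equations whose conductor is exactly $(T+1)(T^2+T+1)\infty$, and filter them by requiring split (rather than non-split) multiplicative reduction at $\infty$; the latter is a local condition that can be decided from a Tate normal form at $\infty$ via a standard quadratic twist criterion. The surviving curves fall into two $F$-isogeny classes, and within each class the Frobenius-minimal representatives (those with $j(E)\notin F^p$) are the ones listed in Tables \ref{table1} and \ref{table2}. The orders of the component groups in the last three columns then follow from the Tate uniformization formula $\Phi_{E,v}\cong \Z/(-\ord_v(j(E)))\Z$ valid for multiplicative reduction, applied at $v=x,y,\infty$ to the $j$-invariants of the explicit equations.

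The main obstacle is really invocational: Schweizer's classification is the technical heart of the argument and must be used as a black box. On top of it, the three bookkeeping tasks — putting the conductor into affine normal form under the stabilizer of $\infty$, distinguishing split from non-split multiplicative reduction at $\infty$ for each Weierstrass equation, and partitioning the surviving curves into $F$-isogeny classes — are routine but require careful case-by-case verification that I would carry out directly on the finite list of candidates produced in the previous step.
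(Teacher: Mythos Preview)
Your proposal is correct and takes essentially the same approach as the paper: the paper's proof is simply a one-line citation to \cite[Thm.~4.1]{Schweizer2}, and your plan is precisely to invoke Schweizer's classification and extract the relevant data. The additional bookkeeping you outline (normalizing under the stabilizer of $\infty$, the irreducibility of $T^2+T+1$ over $\F_{2^s}$, the split/non-split filter, and the component-group computation via $-\ord_v(j(E))$) is exactly what one would do to unpack that citation, and is all correct.
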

\begin{proof}
Theorem 4.1 in \cite{Schweizer2}.
\end{proof}

\begin{table}
\begin{tabular}{|c|c|c|c|c|}
\hline
 & Equation & $x$ & $y$ & $\infty$\\
\hline
$E_1$ & $Y^2+TXY+Y=X^3+T^3+1$ & 3 & 3 & 3 \\
$E_1'$ & $Y^2+TXY+Y=X^3+T^2(T^3+1)$ & 9 & 1 & 1\\
$E_1''$ & $Y^2+TXY+Y=X^3$ & 1 & 1 & 9\\
\hline
\end{tabular}\caption{Isogeny class I}\label{table1}
\end{table}

\begin{table}
\begin{tabular}{|c|c|c|c|c|}
\hline
 & Equation & $x$ & $y$ & $\infty$\\
\hline
$E_2$ & $Y^2+TXY+Y=X^3+X^2+T$ & 5 & 1 & 5\\
$E_2'$ & $Y^2+TXY+Y=X^3+X^2+T^5+T^2+T$ & 1 & 5 & 1\\
\hline
\end{tabular}\caption{Isogeny class II}\label{table2}
\end{table}

Next, \cite[Prop. 3.5]{Schweizer2} describes explicitly the
isogenies between the curves in classes I and II: There is an
isomorphism of \'etale group-schemes over $F$
$$
E_1[3]\cong H_1\oplus H_2,
$$
where $H_1\cong \Z/3\Z$ and $H_2\cong \mu_3$. The subgroup-scheme $H_1$ is generated by
$(T+1, 1)$ and $H_2$ is generated by $(T^2, sT^3+s^2)$, where $s$ is
a third root of unity. Then $E_1/H_1\cong E_1'$ and $E_1/H_2\cong
E_1''$. (It is well-known that an elliptic curve over $F$ with
conductor of degree $4$ has rank $0$, so in fact $E_1(F)=H_1\cong
\Z/3\Z$.) Similarly, the subgroup-scheme $H_3$ of $E_2$ generated by
$(1,1)$ is isomorphic to $\Z/5\Z$, $E_2/H_3\cong E_2'$, and
$E_2(F)=H_3\cong \Z/5\Z$.

\begin{prop}\label{propOpt}
Assume $q=2^s$ and $s$ is odd.
\begin{enumerate}
\item[(i)] $E_1$ and $E_2$ are the $J_0(xy)$-optimal curves in the
isogeny classes I and II.
\item[(ii)] $E_2'$ is the $J^{xy}$-optimal curve in the isogeny class
II.
\item[(iii)] If Conjecture \ref{myConj} is true, then $E_1$ is the
$J^{xy}$-optimal curve in the isogeny class I.
\end{enumerate}
\end{prop}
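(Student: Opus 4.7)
The plan is to use the decomposition of $J_0(xy)$ and $J^{xy}$ into products of elliptic curves (via Jacquet--Langlands and the classification in Theorem \ref{thmECurves2}) and to identify the optimal curves in each case via component group matching. For part (iii), the isogeny from Conjecture \ref{myConj} (proved for $q=2$ in Theorem \ref{propq=2}) transfers the optimal embedding from $J_0(xy)$ to $J^{xy}$.

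Since $\dim J_0(xy) = \dim J^{xy} = q = 2$, Theorem \ref{thmExistence} gives decompositions $J_0(xy) \sim A_I \times A_{II}$ and $J^{xy} \sim B_I \times B_{II}$ up to $F$-isogeny, with $A_I, B_I$ in isogeny class I and $A_{II}, B_{II}$ in class II. For the optimal curves $E_I^\ast, E_{II}^\ast$ in each class, the induced isogeny $\varphi \colon E_I^\ast \times E_{II}^\ast \to J$ (with $J = J_0(xy)$ or $J^{xy}$) has finite kernel $H$, and Theorem \ref{propGroth} at each $v \in \{x, y, \infty\}$ gives
\[
0 \to H_{1,v} \to \Phi_{E_I^\ast, v} \times \Phi_{E_{II}^\ast, v} \to \Phi_{J, v} \to H_{0,v} \to 0
\]
with $|H| = |H_{0,v}| \cdot |H_{1,v}|$ independent of $v$. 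Consulting Tables \ref{table0}, \ref{table1}, and \ref{table2}, the pairing $(E_1, E_2)$ yields $(3, 3, 3) \cdot (5, 1, 5) = (15, 3, 15) = |\Phi_{J_0(xy), v}|$ and $(E_1, E_2')$ yields $(3, 3, 3) \cdot (1, 5, 1) = (3, 15, 3) = |\Phi_{J^{xy}, v}|$ exactly, while alternative pairings force $|H|$ to have incompatible prime factorizations across the three places. Combining this with the constraint that the cuspidal subgroup $\cC \cong \Z/15\Z \subset J_0(xy)(F)$ must embed into $E_I^\ast(F) \oplus E_{II}^\ast(F)$ (which has order 15 only for $(E_1, E_2)$, since the 3- and 5-isogenies kill rational torsion when passing from $E_1$ to $E_1', E_1''$ and from $E_2$ to $E_2'$), this identifies $(E_I^\ast, E_{II}^\ast) = (E_1, E_2)$ for $J_0(xy)$ (part (i)) and $E_{II}^\ast = E_2'$ for $J^{xy}$ (part (ii)).

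For part (iii), assume Conjecture \ref{myConj}; by Theorem \ref{propq=2} this holds for $q=2$, providing an isogeny $\pi \colon J_0(xy) \to J^{xy}$ with kernel $\cC_0 \cong \Z/5\Z$. By part (i), $E_1 \hookrightarrow J_0(xy)$ embeds as an abelian subvariety with $E_1(F) = \Z/3\Z$. Since $\gcd(|E_1(F)|, |\cC_0|) = \gcd(3, 5) = 1$, we have $E_1 \cap \cC_0 = 0$, so $\pi|_{E_1} \colon E_1 \to J^{xy}$ is an embedding, realizing $E_1$ as an abelian subvariety of $J^{xy}$ in class I, whence $E_1$ is $J^{xy}$-optimal in class I.

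The main obstacle is the fully rigorous execution of the matching argument, particularly ruling out alternative pairings with non-trivial kernel $H$: while the order constraint $|\Phi_{E_I^\ast, v}| \cdot |\Phi_{E_{II}^\ast, v}| \cdot |H_{0,v}| = |\Phi_{J, v}| \cdot |H_{1,v}|$ with $|H|$ constant is highly restrictive, showing that no non-optimal pairing yields a valid $|H|$ consistent at all three places requires detailed analysis of the possible subgroup structures of $E_I^\ast \times E_{II}^\ast$ combined with the rational torsion input from $\cC$. The core technical point is that $E_1$ within class I, and $E_2$ (respectively $E_2'$) within class II, is characterized as the unique curve whose $F$-rational torsion accommodates the image of the cuspidal subgroup in the product decomposition.
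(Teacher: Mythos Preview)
Your argument has a fundamental scope error: the proposition is stated for arbitrary $q=2^s$ with $s$ odd, but you repeatedly assume $q=2$. You write $\dim J_0(xy)=q=2$ and decompose $J_0(xy)$ up to isogeny as a product of two elliptic curves. For $s>1$ the Jacobian has dimension $q=2^s\geq 8$; the two elliptic isogeny classes of Theorem~\ref{thmECurves2} account for only two of these dimensions (multiplicity one), and the remaining simple factors are higher-dimensional. Your component-group matching framework via Theorem~\ref{propGroth} therefore does not even get started for $s>1$, so parts (i) and (ii) are simply not addressed in the required generality.

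Even restricting to $q=2$, your argument for (i) and (ii) is, as you yourself note, incomplete: you have not ruled out non-optimal pairings with nontrivial kernel $H$, and the auxiliary claim that passage along the $3$- and $5$-isogenies kills all $F$-rational torsion is asserted without proof. The paper avoids both difficulties. For (i) it invokes the Gekeler--Reversat method (Hecke action on $H_1(\Gamma_0(xy)\backslash\mathcal{T},\mathbb{Z})$, refined in \cite{SchweizerStrong}), which computes $\#\Phi_{E,\infty}$ of the $J_0(xy)$-optimal curve directly: the answer is $3$ in class~I and $5$ in class~II, and these values single out $E_1$, $E_2$ in the tables. For (ii) the paper uses \cite[Cor.~2.4]{PapTAMS}: if $E$ embeds in $J^{xy}$ (split toric reduction at $\infty$), then the kernel of $\Phi_{E,\infty}\to\Phi'_\infty\cong\mathbb{Z}/(q+1)\mathbb{Z}$ lies in $\mathbb{Z}/(q-1)\mathbb{Z}$, so $\#\Phi_{E,\infty}\mid q^2-1$; this forces Frobenius minimality, and since $5\nmid q^2-1$ for $s$ odd, $E_2$ is excluded and only $E_2'$ remains. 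For (iii) your idea is close to the paper's, but $\gcd(3,5)=1$ is again $q=2$-specific; the paper instead observes that the restricted isogeny $E_1\to E$ has kernel contained in $\mathcal{C}_0$, hence of order dividing $q^2+1$, while every nontrivial isogeny from $E_1$ within class~I has degree divisible by $3$, and $3\nmid q^2+1$ for all such $q$. Finally, invoking Theorem~\ref{propq=2} here is at best superfluous and at worst circular, since that theorem is proved afterwards and uses the present proposition.
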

\begin{proof}
(i) There is a method due to Gekeler and Reversat \cite[Cor.
3.19]{Analytic} which can be used to determine $\# \Phi_{E, \infty}$
of the $J_0(\fn)$-optimal curve in a given isogeny class. This
method is based on the study of the action of Hecke algebra on
$H_1(\G_0(\fn)\bs \cT, \Z)$. For $\deg(\fn)=3$ the Gekeler-Reversat
method can be further refined \cite[Cor. 1.2]{SchweizerStrong}.
Applying this method for $\fn=xy$, one obtains $\#\Phi_{E,\infty}=3$
(resp. $\#\Phi_{E,\infty}=5$) for the $J_0(xy)$-optimal elliptic
curve $E$ in the isogeny class I (resp. II). Since there is a unique
curve with this property in each isogeny class, we conclude that
$E_1$ and $E_2$ are the $J_0(xy)$-optimal elliptic curves. (For
$q=2$, this is already contained in \cite[Example 4.4]{Analytic}.)

(ii) Assume $q$ is arbitrary. Let $E$ be an elliptic curve over $F$
which embeds into $J^{xy}$. Since $J^{xy}$ has split toric reduction
at $\infty$, \cite[Cor. 2.4]{PapTAMS} implies that the kernel of the
natural homomorphism
$$\Phi_{E, \infty}\to \Phi'_\infty\cong \Z/(q+1)\Z$$ is a subgroup of
$\Z/(q_\infty-1)\Z$. Hence $\#\Phi_{E, \infty}$ divides $(q^2-1)$.
First, this implies that $\#\Phi_{E, \infty}$ is coprime to $p$, so
$E$ must be Frobenius minimal in its isogeny class. Second, if
$q=2^s$ and $s$ is odd, then $5$ does not divide $(q^2-1)$, so $E_2$
is not $J^{xy}$-optimal. This leaves $E_2'$ as the only possible
$J^{xy}$-optimal curve in the isogeny class II.

(iii) Let $E$ be the $J^{xy}$-optimal curve in the isogeny class I.
By the discussion in (ii), this curve is one of the curves in Table
\ref{table1}. Suppose there is an isogeny $\varphi: J_0(xy)\to
J^{xy}$ whose kernel is $\cC_0$. Restricting $\varphi$ to
$E_1\hookrightarrow J_0(xy)$, we get an isogeny $\varphi': E_1\to E$
defined over $F$ whose kernel is a subgroup of $\cC_0\cong
\Z/(q^2+1)\Z$. Note that $3$ does not divide $q^2+1$. On the other
hand, any isogeny from $E_1$ to $E_1'$ or $E_1''$ must have kernel
whose order is divisible by $3$. This implies that $\varphi'$ has
trivial kernel, so $E=E_1$.
\end{proof}

\begin{rem}
In the notation of the proof of Proposition \ref{propOpt}, consider
the restriction of $\varphi$ to $E_2\hookrightarrow J_0(xy)$. By
part (ii) of the proposition, there results an isogeny $\varphi'':
E_2\to E_2'$ whose kernel is a subgroup of $\Z/(q^2+1)\Z$. Since $5$
divides $q^2+1$ when $s$ is odd, part (ii) of Proposition
\ref{propOpt} is compatible with Conjecture \ref{myConj}.
\end{rem}

\begin{thm}\label{propq=2}
Conjecture \ref{myConj} is true for $q=2$.
\end{thm}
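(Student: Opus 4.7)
For $q=2$, both $J_0(xy)$ and $J^{xy}$ are $2$-dimensional abelian varieties over $F$ that, up to $F$-isogeny, split as products of non-isogenous elliptic curves drawn from Tables \ref{table1} and \ref{table2}. The plan is to make these decompositions and the cuspidal subgroup $\cC_0$ compatible with the explicit Schweizer isogeny $p\colon E_2\to E_2'$ with kernel $H_3$.

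First I would establish the two isogeny decompositions. Since $X_0(x)_F$ and $X_0(y)_F$ have genus $0$, one has $J_0(xy)=J_0(xy)^{\new}$, and combined with Proposition \ref{propOpt}(i) and a dimension count this gives $J_0(xy)\sim_F E_1\times E_2$. For $J^{xy}$, I would refine the argument of Proposition \ref{propOpt}(ii) by applying \cite[Cor. 2.4]{PapTAMS} not only at $v=\infty$ but also at $v=x$: since $q_x-1=1$ and $J^{xy}$ has toric reduction at $x$, the natural map $\Phi_{E,x}\to\Phi_x'\cong\Z/3\Z$ is injective, forcing $\#\Phi_{E,x}\mid 3$. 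This rules out $E_1'$ (component group of order $9$ at $x$), while the bound at $\infty$ rules out $E_1''$ (component group of order $9$ at $\infty$). Hence the $J^{xy}$-optimal curve in class I is $E_1$, and combined with Proposition \ref{propOpt}(ii) for class II, $J^{xy}\sim_F E_1\times E_2'$.

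Next I would locate $\cC_0$ inside the decomposition of $J_0(xy)$. Reducing $E_1$ at the place $T$ (of good reduction, since the conductor is $xy\infty$) gives $Y^2+Y=X^3+1$ over $\F_2$, which has exactly three points; by injectivity of the reduction map on $5$-torsion, $E_1(F)[5]=0$. Thus for any $F$-isogeny $\alpha=(\phi_1,\phi_2)\colon J_0(xy)\to E_1\times E_2$ arising from the decomposition, $\phi_1(\cC_0)=0$, forcing $\alpha(\cC_0)\subset 0\times H_3$; a comparison of orders gives equality. In other words, dividing $J_0(xy)$ by $\cC_0$ corresponds, under $\alpha$, to dividing $E_1\times E_2$ by $0\times H_3$.

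The last step is to upgrade this compatibility to an $F$-isogeny $J_0(xy)\to J^{xy}$ with kernel exactly $\cC_0$. The composition of $\alpha$ with $\mathrm{id}_{E_1}\times p$ (using the Schweizer isogeny $p\colon E_2\to E_2'$ of \cite[Prop. 3.5]{Schweizer2}) factors through $J_0(xy)/\cC_0$ and produces an $F$-isogeny $J_0(xy)/\cC_0\to E_1\times E_2'$; the decomposition of $J^{xy}$ yields an $F$-isogeny $J^{xy}\to E_1\times E_2'$. The main obstacle is then to show that $J_0(xy)/\cC_0$ and $J^{xy}$ are $F$-\emph{isomorphic}, not merely isogenous. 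Since $E_1$ and $E_2'$ are non-isogenous, $\End_F(E_1\times E_2')$ is small and the $F$-abelian surfaces isogenous to $E_1\times E_2'$ form an enumerable family; the component-group data of Theorem \ref{propGroth} and Table \ref{table0} should then single out the unique member containing both $J_0(xy)/\cC_0$ and $J^{xy}$. Alternatively, one can compare the Tate modules directly via the Jacquet-Langlands/Drinfeld isomorphisms \eqref{eq-ESCD} and \eqref{eq-modDR}, which are particularly tractable for $q=2$ where the relevant Hecke algebra is small.
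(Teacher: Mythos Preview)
Your proposal has a genuine gap at the final step, which you yourself flag as ``the main obstacle''. Reducing to an abstract $F$-isomorphism $J_0(xy)/\cC_0 \cong J^{xy}$ is precisely the hard part, and neither of your two suggestions closes it. Enumerating abelian surfaces isogenous to $E_1 \times E_2'$ and separating them by component-group data via Theorem \ref{propGroth} requires knowing the kernels of the specific isogenies $J_0(xy)/\cC_0 \to E_1 \times E_2'$ and $J^{xy} \to E_1 \times E_2'$, and you have not determined either: your $\alpha$ is an unspecified $F$-isogeny, so you control neither $\ker\alpha$ nor the induced map on $2$-torsion. And the Tate-module comparison via \eqref{eq-ESCD} and \eqref{eq-modDR} is an isomorphism of $\overline{\Q}_\ell$-vector spaces, hence carries only isogeny-class information, not enough to distinguish $J^{xy}$ from, say, $E_1 \times E_2'$ itself.

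The paper circumvents the isomorphism problem entirely by exploiting the Atkin-Lehner structure on \emph{both} $X_0(xy)$ and $X^{xy}$. Each curve has genus $2$ and carries a group of involutions isomorphic to $(\Z/2\Z)^2$; since the hyperelliptic involution is unique, the remaining two involutions cut out degree-$2$ quotient maps to elliptic curves. Hence the optimal embeddings $E_1, E_2 \hookrightarrow J_0(xy)$ (resp.\ $E, E_2' \hookrightarrow J^{xy}$, with $E$ still undetermined among $E_1, E_1', E_1''$) come from Picard functoriality of degree-$2$ covers, and one computes that the two elliptic curves meet exactly along a common copy $S$ (resp.\ $S' \cong S$) of their $2$-torsion group scheme. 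This pins down $\ker(\psi\colon J_0(xy)\to E_1\times E_2)=S$ and $\ker(\nu\colon J^{xy}\to E\times E_2')=S'$. Composing $\psi$ with $\phi_1\times\phi_2$ (where $\phi_1$ is one of the three isogenies $E_1\to E$ and $\phi_2$ has kernel $H_3$) and then with $\hat\nu$, the odd degree of $\phi_1\times\phi_2$ forces it to carry the diagonal $\ker\hat\psi$ to the diagonal $\ker\hat\nu$, so the whole composite factors as $[2]\circ\phi'$ for an explicit isogeny $\phi'\colon J_0(xy)\to J^{xy}$ with $\ker\phi'\cong\ker(\phi_1\times\phi_2)\in\{H_3,\ H_1\oplus H_3,\ H_2\oplus H_3\}$. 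A single application of Theorem \ref{propGroth} at $\infty$ (using that $H_2\cong\mu_3$ maps to $0$ in $\Phi_\infty$ while $H_1$, $H_3$ inject) then forces the kernel to be $H_3=\cC_0$. Your determination that the $J^{xy}$-optimal curve in class I is $E_1$, via \cite[Cor.~2.4]{PapTAMS} at $v=x$, is a nice observation and in fact falls out of the paper's argument a posteriori, but on its own it does not produce the needed isogeny.
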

\begin{proof}
Assume $q=2$. By Proposition \ref{propOpt}, $E_1$ and $E_2$ are the
$J_0(xy)$-optimal curves. Since the genus of $X_0(xy)$ is $2$, it is
hyperelliptic (this is true for general $q$ by Schweizer's theorem
which we used in $\S$\ref{SecCDG}). The genus being $2$ also implies
that a quotient of $X_0(xy)$ by an involution has genus $0$ or $1$.
The Atkin-Lehner involutions form a subgroup in $\Aut(X_0(xy))$
isomorphic to $(\Z/2\Z)^2$. Since the hyperelliptic involution is
unique, each $E_1$ and $E_2$ can be obtained as a quotient of
$X_0(xy)$ under the action of an Atkin-Lehner involution. Thus,
there are degree-$2$ morphisms $\pi_i:X_0(xy)\to E_i$, $i=1,2$. In
fact, one obtains the closed immersions $\pi_i^\ast:E_i\to J_0(xy)$
from these morphisms by Picard functoriality. Let
$\widehat{\pi_i^\ast}: J_0(xy)\to E_i$ be the dual morphism. It is
easy to show that the composition
$\widehat{\pi_i^\ast}\circ\pi_i^\ast:E_i\to E_i$ is the isogeny
given by multiplication by $2=\deg(\pi_i)$. This implies that $E_1$
and $E_2$ intersect in $J_0(xy)$ in their common subgroup-scheme of
$2$-division points $S:=\pi_1^\ast(E_1)[2]=\pi_2^\ast(E_2)[2]$, so
$$
J_0(xy)(F)=H_1\oplus H_3\cong \Z/3\Z\oplus \Z/5\Z=\cC.
$$
Let $\psi: J_0(xy)\to E_1\times E_2$ be the isogeny with kernel $S$.
Note that $S$ is characterized by the non-split exact sequence of
group-schemes over $F$:
$$
0\to\mu_2\to S\to \Z/2\Z\to 0.
$$

By Proposition \ref{propOpt}, $E_2'$ is the $J^{xy}$-optimal
elliptic curve in the isogeny class II. Let $E$ be the
$J^{xy}$-optimal elliptic curves in class I. From the proof of
Proposition \ref{propOpt}, we know that $E$ is Frobenius minimal, so
it is one of the curves listed in Table \ref{table1}. There are also
Atkin-Lehner involutions acting on $X^{xy}$ and they form a subgroup
in $\Aut(X^{xy})$ isomorphic to $(\Z/2\Z)^2$; see \cite{PapHE}. Now
exactly the same argument as above implies that $E$ and $E_2'$
intersect in $J^{xy}$ along their common subgroup-scheme of
$2$-division points $S'\cong S$. Let $\nu: J^{xy}\to E\times E_2'$
be the isogeny with kernel $S'$. Let $\hat{\nu}: E\times E_2' \to
J^{xy}$ be the dual isogeny.

The following argument is motivated by \cite{GoRo}. Consider the
composition
$$
\phi:J_0(xy)\xrightarrow{\psi} E_1\times E_2
\xrightarrow{\phi_1\times\phi_2} E\times E_2'
\xrightarrow{\hat{\nu}} J^{xy},
$$
where $\phi_1$ is either the identity morphism or has kernel $H_1$,
$H_2$, and $\phi_2$ has kernel $H_3$. Since $\phi_1\times\phi_2$ has
odd degree, this morphism maps the kernel of $\hat{\psi}$ to the
kernel of $\hat{\nu}$. Indeed, both are the ``diagonal'' subgroups
isomorphic to $S$ in the corresponding group-schemes $(E_1\times
E_2)[2]$ and $(E\times E_2')[2]$. More precisely,
$\cH:=\ker(\hat{\psi})$ is uniquely characterized as the
subgroup-scheme of $\cG:=(E_1\times E_2)[2]$ having the following
properties: $\cH^0$ is the image of the diagonal morphism $\mu_2\to
\mu_2\times \mu_2= \cG^0$ and the image of $\cH$ in $\cG^\et$ under
the natural morphism $\cG\to \cG^\et$ is the image of the diagonal
morphism $\Z/2\Z\to \Z/2\Z\times \Z/2\Z$. A similar description
applies to $\ker(\hat{\nu})\subset (E\times E_2')[2]$. Thus, there
is an isogeny $\phi':J_0(xy)\to J^{xy}$ such that $\phi=\phi'[2]$
and $\ker(\phi')\cong \ker(\phi_1\times\phi_2)$. We conclude that
$J^{xy}$ is isomorphic to the quotient of $J_0(xy)$ by one of the
following subgroups
$$
H_3, \quad H_1\oplus H_3,\quad H_2\oplus H_3.
$$
Now note that $H_1$ and $H_3$ under the specialization map
$\phi_\infty$ inject into $\Phi_\infty$, but $H_2$ maps to $0$
(indeed, $H_2\cong \mu_3$ has non-trivial action by
$\Gal(\bar{\F}_\infty/\F_\infty)$ whereas $\Phi_\infty$ is
constant). Hence Theorem \ref{propGroth} implies that the quotients
of $J_0(xy)$ by the subgroups listed above have component groups at
$\infty$ of orders $ 3, 1, 9$, respectively. Since
$\Phi_\infty'\cong \Z/3\Z$, we see that $J^{xy}$ is the quotient of
$J_0(xy)$ by $H_3$ which is $\cC_0$ in this case.
\end{proof}



\end{document}